\newtheorem{theorem}{Theorem}[section]
\newtheorem{proposition}[theorem]{Proposition}    
\newtheorem{lemma}[theorem]{Lemma}           
\newtheorem{corollary}[theorem]{Corollary}
\newtheorem{assumption}[theorem]{Assumption}
\theoremstyle{definition}
\newtheorem{remark}[theorem]{Remark}
\newcommand{\mb}[1]{\ensuremath{\bf{#1}}}
\newcommand{\N}{{\mb{N}}}
\newcommand{\R}{{\mb{R}}}
\newcommand{\al}{\alpha}
\newcommand{\be}{\beta}
\newcommand{\A}{{\mathscr A}}
\newcommand{\B}{{\mathscr B}}
\newcommand{\W}{{\mathcal W}}
\renewcommand{\P}{{\mathfrak P}}
\newcommand{\M}{{\mathcal M}}
\newcommand{\K}{{\mathcal K}}
\newcommand{\I}{{\mathcal I}}
\newcommand{\J}{{\mathcal J}}
\let \Re \relax
\DeclareMathOperator{\Re}{Re}
\let \Im \relax
\DeclareMathOperator{\Im}{Im}
\newcommand{\ovl}[1]{\overline{#1}}
\newcommand{\Con}{\ensuremath{\mathscr C}}
\newcommand{\Cinf}{\ensuremath{\Con^\infty}}
\newcommand{\Cinfc}{\ensuremath{\Con^\infty}_{c}}
\renewcommand{\S}{\ensuremath{\mathscr S}}
\newcommand{\inp}[2]{\langle #1, #2 \rangle} 
\newcommand{\scp}[2]{\left( #1 , #2 \right)}
\newcommand{\norm}[2]{{\| #1 \|}_{#2}}
\newcommand{\sob}[2]{\ensuremath{(H^{#1},H^{#2})}}
\newcommand{\est}[1]{\langle #1 \rangle}
\newcommand{\symp}[4]{
\sigma((#1,#2),(#3,#4))}
\newcommand{\sympl}[4]{
 \inp{#2}{#3} - \inp{#4}{#1}}
\newcommand{\Op}{\ensuremath{\mathrm{Op}}}
\DeclareMathOperator{\id}{Id}
\DeclareMathOperator{\supp}{supp}
\renewcommand{\d}{\ensuremath{\partial}}
\newcommand{\mint}[1]{\int\limits_{\bigcirc\hspace{-0.45em}\text{\tiny$#1$}}}
\newcommand{\transp}{\ensuremath{\phantom{}^{\displaystyle{t}}}}
\newcommand{\wt}{\ensuremath{\widetilde}}
\renewcommand{\qedsymbol}{$\blacksquare$}
\newcommand{\bld}[1]{\mbox{\boldmath $#1$}}
\newcommand{\mstrut}[1]{\mbox{\rule{0mm}{#1}}}
\newcommand{\hf}{\frac{1}{2}}
\newcommand{\wrt}{w.r.t.\@\xspace}
\newcommand{\rhs}{r.h.s.\@\xspace}
\newcommand{\ie}{i.e.\@\xspace}
\newcommand{\eg}{e.g.\@\xspace}
\newcommand{\resp}{resp.\@\xspace}
\newcommand{\nhd}{neighborhood\xspace}
\numberwithin{equation}{section}
\begin{document}
\baselineskip 16pt

\date{\today}
\title[Pseudodifferential multi-product]{Pseudodifferential multi-product
      representation of the solution operator
      of a  parabolic equation}

\author{Hiroshi Isozaki}
\address{University of Tsukuba, Institute of Mathematics,
  1-1-1 Tennodai Tsukuba Ibaraki 305-8571, Japan}
\author{J\'{e}r\^{o}me Le Rousseau}
\address{Universit\'{e}s
  d'Aix-Marseille, Universit\'e de Provence, Laboratoire d'Analyse
  Topologie Probabilit\'{e}s, CNRS UMR 6632, 39 rue F.~Joliot-Curie,
  13453 Marseille cedex 13, France. Current address:
  Universit\'e d'Orl\'eans,
  Laboratoire Math\'ematiques et Applications, Physique Math\'ematique
  d'Orl\'eans, CNRS UMR 6628, F\'ed\'eration Denis Poisson, FR CNRS
  2964, B.P. 6759, 45067 Orl\'eans cedex 2, France.}

\begin{abstract} By using a time slicing procedure, we represent the
  solution operator of a second-order parabolic pseudodifferential
  equation on $\R^n$ as an infinite product of 
    zero-order pseudodifferential operators. A similar representation
  formula is proven for parabolic differential equations on a compact
  Riemannian manifold. Each operator in the multi-product is given by
  a simple explicit Ansatz.  The proof is based on an effective use of
  the Weyl calculus and the
  Fefferman-Phong inequality.   \\

  \noindent
  {\bfseries Keywords:} { Parabolic equation; Pseudodifferential initial
  value problem; Weyl quantization; Infinite product of
  operators; Compact manifold.}\\
  
  \noindent
  {\bfseries AMS 2000 subject classification: 
    35K15, 35S10, 47G30, 58J35, 58J40.} 
\end{abstract}

\maketitle


\section{Introduction and notation}
We begin with recalling standard notation for the calculus of
pseudodifferential operators ($\psi$DOs). Throughout the article, we
shall most often use spaces of global symbols; a function $a \in
\Con^{\infty}(\R^n\times \R^p)$ is in $S^m(\R^n\times \R^p)$ if
for all multi-indices $\alpha$, $\beta$ there exists $C_{\alpha
  \beta}>0$ such that
\begin{equation}
  |\partial_x^\alpha \partial_{\xi}^\beta  a(x,\xi)| \leq C_{\alpha \beta}\: 
  \est{\xi}^{m  - |\beta|}, \ \ x \in \R^n, 
  \ \xi \in \R^p, \quad \est{\xi} := (1 + |\xi|^2)^{1/2}.
\end{equation}
We write $S^m = S^m(\R^n\times\R^n)$.
$\psi$DOs of order $m$, in  Weyl quantization, 
are formally given by  (see \cite{Hoermander:79} or \cite[Chapter
18.5]{Hoermander:V3})
\begin{equation*}
  \Op^w(a)\, u(x) = a^{w}(x,D_x) u (x) = (2\pi)^{-n}\iint e^{i\inp{x-y}{\xi}} a((x+y)/2,\xi)\: 
  u (y)\: d y \: d \xi,\quad u \in \S'(\R^n).
\end{equation*}
We denote by $\Psi^m(\R^n)$, or simply by $\Psi^m$, the space of such
$\psi$DOs of order $m$.

We consider a second-order $\psi$DO defined by the Weyl quantization
of $q(x,\xi)$. Assuming uniform ellipticity and
  positivity for $q(x,\xi)$, we study the following parabolic Cauchy
problem
\begin{align}
  \label{eq: cauchy pb 1 intro}
  \d_t u + q^w(t,x,D_x) u &=0, \ \ \ 0< t\leq T,\\
  u \mid_{t=0} &= u_0,
  \label{eq: cauchy pb 2 intro}
\end{align}
for $u_0$ in $L^2(\R^n)$ or in some Sobolev space. The solution operator of this Cauchy problem is denoted by $U(t',t)$, $0\leq t\leq t'\leq T$.
Here, we are interested in providing a representation
of $U(t',t)$ in the form of a multi-product of $\psi$DOs.

Such a representation is motivated by the results of the second author
in the case of hyperbolic equations
\cite{LeRousseau:04,LeRousseau:06b}. If the symbol $q$ is only a
function of $\xi$, the solution of (\ref{eq: cauchy pb 1
  intro})--(\ref{eq: cauchy pb 2 intro}) is simply given by means of Fourier
transformations as
\begin{align*}
  u(t,x) = (2\pi)^{-n} \iint e^{i\inp{x-y}{\xi}} e^{-t q(\xi)}\, 
  u (y)\: d y \: d \xi.
\end{align*}
Following \cite{LeRousseau:04}, we then hope to have a good
approximation of $u(t,x)$, for small $t$, in the case where $q$ depends
also on both $t$ and $x$: 
\begin{align*}
  u(t,x) \simeq (2\pi)^{-n} \iint e^{i\inp{x-y}{\xi}} e^{-t q(0,(x+y)/2,\xi)}\, 
  u (y)\: d y \: d \xi = p_{(t,0)}^w(x,D_x) u (x),
\end{align*}
where $p_{(t'',t')}(x,\xi) := e^{-(t''-t') q(t',x,\xi)}$, $0\leq t'\leq t''\leq T$,
which is in $S^0$. The infinitesimal approximation we introduce is thus
of pseudodifferential nature (as opposed to Fourier integral operators
in the hyperbolic case \cite{LeRousseau:04}). 

With such an infinitesimal operator, by iterations, we are then led
to introducing the following multi-product of $\psi$DOs to approximate the 
solution operator $U(t',t)$ of the Cauchy problem (\ref{eq: cauchy pb 1
  intro})--(\ref{eq: cauchy pb 2 intro}):
\begin{align*}
    \W_{\P,t} := \left\{
      \begin{array}{ll}
        p_{(t,0)}^w(x,D_x) & \text{if }\ 0\leq t\leq t^{(1)},\\
        p_{(t,t^{(k)})}^w(x,D_x) {\displaystyle \prod_{i=k}^{1}} 
	p_{(t^{(i)},t^{(i-1)})}^w(x,D_x) 
        & \text{if }\ t^{(k)}\leq t\leq t^{(k+1)}.
      \end{array}
    \right.
  \end{align*}
  where $\P=\{t^{(0)},t^{(1)},\dots,t^{(N)}\}$ is a subdivision of
  $[0,T]$ with $0=t^{(0)}< t^{(1)}<\dots <t^{(N)}=T$. It should be
  noted that in general $p_{(t'',t')}^w(x,D_x)$, $t'\leq t''$, does not have  semi-group properties.

In \cite{LeRousseau:04}, for the hyperbolic case, the standard
quantization is used for the equation and for the approximation
Ansatz. However, in the present parabolic case this approach fails
(see Remark~\ref{rem: jutification Weyl} below). Instead, the choice of
Weyl quantization yields convergence results of the
Ansatz $\W_{\P,t}$ comparable to those in
\cite{LeRousseau:04,LeRousseau:06b}. The convergence of
$\W_{\P,t}$ to the solution operator $U(t,0)$ is shown in operator norm with
an estimate of the convergence rate depending on the (H\"older)
regularity of $q(t,x,\xi)$ \wrt the evolution parameter $t$. See
Theorem~\ref{theorem: main theorem Rn} in Section~\ref{sec:
  convergence} below for a precise statement.

Such a convergence result thus yields a representation of the solution
operator of the Cauchy problem (\ref{eq: cauchy pb 1
  intro})--(\ref{eq: cauchy pb 2 intro}) by an infinite multi-product
of $\psi$DOs. The result relies (i) on the proof of the stability of
the multi-product $\W_{P,t}$ as $N= |\P|$ grows to $\infty$
(Proposition~\ref{prop:Hs norm under control}) and (ii) on a
consistency estimate that measures the infinitesimal error made by replacing
$U(t',t)$ by $p_{(t',t)}^w(x,D_x)$ (Proposition~\ref{prop: consistency
  error}). The stability in fact follows from a sharp Sobolev-norm estimate for $p_{(t',t)}^w(x,D_x)$ (see Theorem~\ref{theorem: sharp estimate}): for $s\in\R$, there exists $C\geq 0$ such that 
\begin{align}
  \label{eq: intro sharp estimate}
  \| p_{(t',t)}^{w}(x,D_x)\|_{(H^s,H^s)} \leq 1 + C (t'-t).
\end{align}
The Fefferman-Phong inequality plays an important role here.

The representation of the solution operator by multi-products of
$\psi$DO follows from the exact convergence of the Ansatz $\W_{\P,t}$
in some operator norm. We emphasize that the convergence we obtain is
not up to a regularizing operator. A further interesting aspect of
this result is that each constituting operator of the multi-product is
given explicitly. With such a product representation, we have in
mind the development of numerical schemes for practical
applications. Once the problem is discretized in space, the use of
fast Fourier transformations (FFT) can yield numerical methods with
low computational complexity, with possibly microlocal approximations
of the symbols in question as is sometimes done in the case of
hyperbolic equations (see for instance
\cite{dHlRW:00,lr01a,lr01b,LRDH:03}). We also anticipate that our
representation procedure can be used in theoretical purposes.

As described above, the first part of this article is devoted to the
parabolic Cauchy problem on $\R^n$ and to the study of the properties
of the approximation Ansatz $\W_{\P,t}$. In the second part, we shall
consider a parabolic problem on a compact Riemannian manifold without
boundaries. In this case, the operator $q^w(x,D_x)$ is only considered
of differential type, in particular for its full symbol to be known
exactly.  In each local chart we can define an infinitesimal
approximation of the solution operator as is done in $\R^n$ and we
combine these local $\psi$DOs together with the help of a partition of
unity. This yields a counterpart of $p_{(t',t)}^w(x,D_x)$ for the
manifold case (see Section~\ref{secM: intro}) denoted by $P_{(t',t)}$.
In fact, the sharp estimate~(\ref{eq: intro sharp estimate}) still
holds in $L^2$ ($s=0$) for this infinitesimal approximation
(Theorem~\ref{theoremM: sharp estimate}).  The proof of a consistency
estimate (Proposition~\ref{prop: consistency manifold}) requires the
analysis of the effect of changes of variables for Weyl symbols of the
form of $e^{-(t'-t) q}$. The choice we have made for the definition of
$P_{(t',t)}$ is invariant through such changes of variables up to a
first-order precision \wrt the small parameter $h=t'-t$, which is
compatible with the kind of results we are aiming at.  With stability
and consistency at hand, the convergence result then follows as in the
case of $\R^n$.

 In the manifold case, the constituting $\psi$DOs of the
  multi-product are given explicitly in each local chart.  We observe
  moreover that the computation of the action of these local operators
  can be essentialy performed as in the case of $\R^n$, which is appealing 
for practical implementations.

Another approach to representation of the solution operator $U(t',t)$
can be found in the work of C.~Iwasaki (see
\cite{Tsutsumi:74,Iwasaki:77,Iwasaki:84}). Her work encompasses the
case of degenerate parabolic operators, utilizes multi-product of
$\psi$DOs and analyses the symbol of the resulting operator, using the
work of Kumano-go \cite{Kumano-go:81}. However, the symbol of the
solution operator $U(t',t)$ is finally obtained by solving a Volterra
equation. Such integral equations also appear in related works on the
solution operator of parabolic equations (see \eg
\cite{Greiner:71,ST:84}).  The alternative method we present here will be
more suitable for applications because of the explicit aspect of the
representations. The step of the integral equation in the above works
makes the representation formula less explicit. However, the reader
will note that the technique we use in our approach here do
not apply to the case of degenerate parabolic equations like those
treated in \cite{Tsutsumi:74,Iwasaki:77,Iwasaki:84}. The question of
the extension of the convergence and representation results we present
here to the case of degenerate parabolic equations appears to us an
interesting question.

Let us further recall some standard notions. We denote by $\sigma
(.,.)$ the symplectic 2-form on the vector space $T^\ast(\R^n)$:
 \begin{equation}
   \symp{x}{\xi}{y}{\eta}= \sympl{x}{\xi}{y}{\eta},
 \end{equation}
and we denote by  $\{f,g\}$ the Poisson
bracket of two functions, 
\ie   
\begin{equation*}
  \{f,g\} = \sum_{j=1}^n \d_{\xi_j} f\ \d_{x_j} g -  
  \d_{x_j} f\ \d_{\xi_j} g. 
\end{equation*}
We shall use the notation $\#^w$ to denote the
composition of symbols in Weyl quantization,\ie, $a^w(x,D_x)\circ b  ^w(x,D_x) = (a\ \#^w b)^w   (x,D_x)$.
The following result is classical.
\begin{proposition}
  \label{prop: Weyl composition}
  Let $a \in S^m$, $b \in S^{m'}$. Then  $a\ \#^w b \in S^{m+m'}$ and 
  \begin{align}
    \label{eq: composition formula Weyl}
    (a\ &\#^w b) (x,\xi) = \sum_{j=0}^k 
    \frac{1}{j!} \left(\frac{i}{2} \symp{D_x}{D_\xi}{D_y}{D_\eta} \right)^j \ 
    a(x,\xi) b(y,\eta) \left.\mstrut{0.4cm}\right|_{{y=x}\atop {\eta=\xi}}\\
    &+ \pi ^{-2n}\int\limits_0^1 \frac{(1-r)^k}{k!} 
    \mint{4} e^{i\Sigma(z,\zeta,t,\tau,\xi)} \left(\frac{i}{2} 
    \symp{D_x}{D_\zeta}{D_y}{D_\tau} \right)^{k+1}\!\!
    a(x+r z,\zeta) b(y+r t,\tau) \ d r \ d z\ d \zeta\ d t\ d \tau
    \left.\mstrut{0.4cm}\right|_{y=x},
    \nonumber
  \end{align}
  where $\Sigma(z,\zeta,t,\tau,\xi) = 2 (\sympl{t}{\tau-\xi}{z}{\zeta-\xi})$.
\end{proposition}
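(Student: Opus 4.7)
The proof starts from the oscillatory-integral representation of the Weyl product in phase space,
\[
(a \#^w b)(x,\xi) = \pi^{-2n} \iiiint e^{-2i\sigma(Y-X,Z-X)} a(Y)\, b(Z)\, dY\, dZ,
\]
with $X=(x,\xi)$. The translation $Y-X=(z,\zeta-\xi)$, $Z-X=(t,\tau-\xi)$ turns this into
\[
(a \#^w b)(x,\xi) = \pi^{-2n} \iiiint e^{i\Sigma(z,\zeta,t,\tau,\xi)} a(x+z,\zeta)\, b(y+t,\tau)\, dz\, d\zeta\, dt\, d\tau\bigg|_{y=x},
\]
understood in the usual oscillatory sense; this is the identity one Taylor-expands. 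The fact that $a\#^w b\in S^{m+m'}$ follows, once the expansion is established, from standard estimates on the remainder.

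The key computation is an $r$-interpolation. Set
\[
\Phi(r):=\pi^{-2n} \iiiint e^{i\Sigma} a(x+rz,\zeta)\, b(y+rt,\tau)\, dz\, d\zeta\, dt\, d\tau\bigg|_{y=x}, \qquad r\in[0,1],
\]
so that $\Phi(1)=(a\#^w b)(x,\xi)$, while at $r=0$ the $(z,t)$-integrals collapse by Fourier inversion, producing $\pi^{2n}\delta(\zeta-\xi)\delta(\tau-\xi)$ and hence $\Phi(0)=a(x,\xi)b(x,\xi)$. I would then prove by induction on $j$ that
\[
\Phi^{(j)}(r) = \pi^{-2n}\iiiint e^{i\Sigma}\,\Bigl(\tfrac{i}{2}\sigma(D_x,D_\zeta;D_y,D_\tau)\Bigr)^{j} a(x+rz,\zeta)\, b(y+rt,\tau)\, dz\, d\zeta\, dt\, d\tau\bigg|_{y=x}.
\]
The chain rule produces factors of $z$ and $t$ in front of $a$ and $b$; these are traded for $\tau$- and $\zeta$-derivatives via the phase identities $z\,e^{i\Sigma}=\tfrac{1}{2i}\partial_\tau e^{i\Sigma}$ and $t\,e^{i\Sigma}=-\tfrac{1}{2i}\partial_\zeta e^{i\Sigma}$, after integration by parts in $\tau$, resp.\ $\zeta$. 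Keeping track of $D=\tfrac{1}{i}\partial$, each application of $\tfrac{d}{dr}$ materializes exactly one copy of $\tfrac{i}{2}\sigma(D_x,D_\zeta;D_y,D_\tau)$ inside the oscillatory integral.

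One then applies Taylor's formula with integral remainder,
\[
\Phi(1)=\sum_{j=0}^{k}\frac{1}{j!}\,\Phi^{(j)}(0) + \int_0^1\frac{(1-r)^{k}}{k!}\,\Phi^{(k+1)}(r)\,dr.
\]
Inserting the inductive identity, the remainder becomes exactly the one displayed in the statement. In the terms $\Phi^{(j)}(0)$ the amplitude no longer depends on $(z,t)$, so the same Fourier-inversion argument as for $\Phi(0)$ produces $\delta(\zeta-\xi)\delta(\tau-\xi)$ and collapses the sum to $\sum_{j=0}^k \tfrac{1}{j!}\bigl(\tfrac{i}{2}\sigma(D_x,D_\xi;D_y,D_\eta)\bigr)^{j} a(x,\xi)b(y,\eta)|_{y=x,\eta=\xi}$, as required. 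The only real obstacle is the bookkeeping in the inductive step: one has to verify that the signs and combinatorial coefficients arising from iterated integration by parts really reassemble into the single operator $\bigl(\tfrac{i}{2}\sigma(D_x,D_\zeta;D_y,D_\tau)\bigr)^{j}$ rather than a sign-flipped variant or an adjoint. Once this is in hand the rest is mechanical.
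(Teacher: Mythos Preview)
Your proposal is correct and is essentially the same argument as the paper's. The paper also starts from the oscillatory-integral representation of $a\#^w b$, introduces the same interpolation parameter $r$, converts the factors $z_j$, $t_j$ arising from differentiation into $\tau$- and $\zeta$-derivatives of $e^{i\Sigma}$, and integrates by parts; it reaches the successive orders by integrating by parts in $r$, which is exactly your Taylor expansion of $\Phi$ written out step by step rather than packaged as $\Phi^{(j)}(0)$ plus an integral remainder.
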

The result of Proposition~\ref{prop: Weyl composition} is to be
understood in the sense of oscillatory integrals (see \eg
\cite[Chapter 7.8]{Hoermander:V1}, \cite {AG:91}, \cite{GS:94} or
\cite{Kumano-go:81}).  For the sake of concision we have introduced
\begin{align*}
  \mint{n} := \underbrace{\int \cdots \int}_{n \text{ times}}, 
  \quad \text{for } n\geq 3,\ n \in \N.
\end{align*}
For the exposition to be self contained, we prove
Proposition~\ref{prop: Weyl composition} in Appendix~\ref{appendix:
  composition}.

We sometimes use the notion of multiple symbols. A function
$a(x,\xi,y,\eta) \in C^{\infty}(\R^{q_1}\times
\R^{p_1}\times\R^{q_2}\times \R^{p_2})$ is in $S^{m,m'}(\R^{q_1}\times
\R^{p_1}\times\R^{q_2}\times \R^{p_2})$, if for all multi-indices
$\alpha_1$, $\beta_1$, $\alpha_2$, $\beta_2$, there exists
$C_{\alpha_1 \alpha_2}^{\beta_1 \beta_2}>0$ such that
\begin{equation}
  |\partial_x^{\alpha_1} \partial_\xi^{\beta_1}
  \partial_y^{\alpha_2} \partial_\eta^{\beta_2} a(x,\xi,y,\eta)| 
  \leq C_{\alpha_1 \alpha_2}^{\beta_1 \beta_2}\: 
  \est{\xi}^{m - |\beta_1|}
  \est{\eta}^{m' -|\beta_2|},
\end{equation}
$x \in \R^{q_1}$, $y \in \R^{q_2}$, 
  $\xi \in \R^{p_1}$,  $\eta \in \R^{p_2}$ (see for instance \cite[Chapter 2]{Kumano-go:81}).


  For $s \in \R$. We set $E^{(s)} := \est{D_x}^s = \Op(\est{\xi}^s)$,
  which realizes an isometry from $H^r(\R^n)$ onto $H^{r-s}(\R^n)$ for
  any $r \in \R$.  We denote by $\scp{.}{.}$ and $\|.\|$ the inner
  product and the norm of $L^2(\R^n)$, respectively and
  $\|\cdot\|_{H^s}$ for the norm on $H^s(\R^n)$, $s \in \R$.  For two
  Hilbert spaces $K$ and $L$, we use $\|\cdot\|_{(K,L)}$ to denote the
  norm in ${\mathcal L}(K,L)$, the set of bounded operators from $K$
  into $L$.

Our basic strategy is to obtain a bound for $\psi$DOs
involving a small parameter $h\geq 0$. In the following, we say that
an inequality holds uniformly in $h$ if it is the case when $h$ varies
in $[0,h_{\max}]$ for some $h_{\max}>0$.  In the sequel, $C$ will
denote a generic constant independent of $h$, whose value may change
from line to line.  
The semi-norms
\begin{equation}
  p_{\alpha \beta} (a) := \sup_{(x,\xi)\in \R^n\times \R^p}
  \est{\xi}^{-m + |\beta|} |\partial_x^\alpha
  \partial_\xi^\beta a (x,\xi)|
\end{equation}
endow a Fr\'{e}chet space structure to $S^m(\R^n\times \R^p)$.  In the
case of a symbol $a_h$ that depends on the parameter $h$ we shall say
that $a_h$ is in $S^m_{\rho,\delta}$ uniformly in $h$ if for all
$\al$, $\beta$ the semi-norm $p_{\al\beta}(a_h)$ is uniformly bounded
in $h$. Similarly, we shall say that an operator $A$ is in $\Psi^m$
uniformly in $h$ if its (Weyl) symbol is itself in $S^m$ uniformly in
$h$.

The outline of the article is as follows. Sections~\ref{sec: Hs bound}
and \ref{sec: convergence} are devoted to the multi-product representation
of solutions on $\R^n$. In Section~\ref{sec: Hs bound} we prove the
sharp Sobolev norm estimate (\ref{eq: intro sharp estimate}), which
leads in Section~\ref{sec: convergence} to the stability of the
multi-product representation.  We then prove convergence of the
multi-product representation in Section~\ref{sec: convergence}. Some
of the results of these two sections make use of composition-like
formulae, whose proofs are provided in Appendix~\ref{appendix:
  composition}. In Section~\ref{sec: manifold}, we address the
multi-product representation of solutions of a second-order
differential parabolic problem on a compact Riemannian manifold. As in
the previous sections we prove stability (in the $L^2$ case) through a
sharp operator norm estimate and we prove convergence of the
multi-product representation. The convergence proof requires an
analysis of the effect of a change of variables on symbols of the form
$e^{-h q(x,\xi)}$, from one local chart to another, which we
present in Appendix~\ref{sec: appendix change variables}.

\section{A sharp ${H^s}$ bound}
\label{sec: Hs bound}

We first make precise the assumption on the symbol $q(x,\xi)$
mentioned in the introduction.  
\begin{assumption}
  \label{assumption: symbol q}
  The symbol $q$ is of the form
  $q=q_2 + q_1$, where $q_j \in S^j$,
  $j=1,2$, $q_2(x,\xi)$ is real-valued and for some $C\geq 0$ we have
  \begin{equation*}
    q_2(x,\xi) \geq C |\xi|^2, \quad x \in \R^n, \quad  \xi \in \R^n, \
    |\xi|\ \text{ sufficiently large.}
    \end{equation*}
\end{assumption}
Consequently,  for some $C\geq 0$, we have 
\begin{align}
  \label{eq: ellipticity}
  q_2(x,\xi) + \Re q_1(x,\xi) \geq C |\xi|^2,
  \quad x \in \R^n, \quad  \xi \in \R^n, \
  |\xi|\ \text{ sufficiently large, say}\ 
  |\xi|\geq\vartheta>0.
\end{align}
  As is stated in the introduction, our main aim is to deal with the operator 
  $p_h^{w}(x,D_x)$ where 
  \[p_h (x,\xi) = e^{-h q(x,\xi)}.\]
 It is well-known that
the $\psi$DO $p_h^{w}(x,D_x)$ is uniformly $H^s$-bounded in $h$, $s \in
\R$. Actually we have the following sharper estimate. 
\begin{theorem}
\label{theorem: sharp estimate}
Let $s \in\R$. There exists a constant $C \geq 0$ such that
\begin{align*}
  \| p_h^{w}(x,D_x)\|_{(H^s,H^s)} \leq 1 + C h,
\end{align*}
holds for all $h\geq 0$.
\end{theorem}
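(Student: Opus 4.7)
The plan is to square the desired inequality and analyse the resulting operator $(p_h^w)^\ast p_h^w$ via the composition formula of Proposition~\ref{prop: Weyl composition}, the ellipticity~(\ref{eq: ellipticity}), and the Fefferman--Phong inequality. I would split the argument into a reduction to $s=0$ and the $L^2$ analysis itself.

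For the reduction, conjugate by the isometry $E^{(s)}$: $\|p_h^w\|_{(H^s,H^s)} = \|E^{(s)} p_h^w E^{(-s)}\|_{(L^2,L^2)}$. Two applications of Proposition~\ref{prop: Weyl composition} show that the Weyl symbol of $E^{(s)} p_h^w E^{(-s)}$ equals $p_h$ modulo corrections of the form $h\,\tilde r_h$: every correction term in the Weyl expansion carries at least one derivative of $p_h = e^{-hq}$, and the chain rule produces the factor $-h\,\partial q\cdot p_h$, while the Gaussian-like decay of $p_h$ for $|\xi|\geq \vartheta$ absorbs the growth of $\partial q$. A Calder\'on--Vaillancourt estimate then gives an $O(h)$ $L^2$-bound on the correction, which is absorbed into the final constant. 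It therefore suffices to treat $s=0$.

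For $s=0$, by taking square roots it is enough to show $\|(p_h^w)^\ast p_h^w\|_{(L^2,L^2)} \leq 1+Ch$. The Weyl symbol of $(p_h^w)^\ast p_h^w$ is $\bar p_h\,\#^w p_h$, whose leading part is $|p_h|^2 = e^{-2h(q_2+\Re q_1)}$ and whose higher-order terms each carry at least one additional factor of $h$ by the same chain-rule observation. The ellipticity~(\ref{eq: ellipticity}) gives $|p_h|^2\leq 1$ on $\{|\xi|\geq\vartheta\}$, while boundedness of $q_2 + \Re q_1$ on $\{|\xi|\leq\vartheta\}$ yields $|p_h|^2\leq 1+Mh$ there. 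Hence $(1+C_0 h) - |p_h|^2\geq 0$ pointwise once $C_0\geq M$, uniformly for $h\in[0,h_{\max}]$.

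The Fefferman--Phong inequality applied to this non-negative symbol then bounds the associated Weyl operator from below. Because every derivative of $|p_h|^2$ brings down at least one factor of $h$, the Fefferman--Phong constant here is of order $h^2$, giving $\bigl((1+C_0 h) - |p_h|^2\bigr)^w \geq -C_1 h^2\,I$. Combining this with the $O(h)$ operator-norm bound on the Weyl-composition remainder $\bar p_h \,\#^w\, p_h - |p_h|^2$, and enlarging $C_0$ to absorb the lower bound, yields $(p_h^w)^\ast p_h^w \leq (1+Ch)\,I$, and the theorem follows. The main obstacle is the careful bookkeeping of $h$-powers: the family $\{p_h\}$ is not in $S^0$ with $h$-uniform seminorms in the classical sense, so exploiting both the Weyl (rather than standard) quantization --- which ensures the principal part of $\bar p_h\,\#^w\,p_h$ is exactly $|p_h|^2$ and that the composition is real --- and the fact that every symbol derivative contributes a factor of $h$ is essential for controlling the Fefferman--Phong constant by $O(h^2)$ rather than merely $O(1)$.
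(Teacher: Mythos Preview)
Your $s=0$ analysis is close to the paper's Lemma~\ref{lemma: pre-sharp estimate}, but the claim that the Fefferman--Phong constant is $O(h^{2})$ is not justified. The $S^{2}$-seminorms of $(1+C_{0}h)-|p_{h}|^{2}$ are only $O(1)$: the factor $h$ you gain from differentiating $e^{-hq}$ is exactly offset by the growth of $\partial q\in S^{2}$ when measured in $S^{0}$ (note $\sup_{\xi} h\langle\xi\rangle^{2}e^{-ch\langle\xi\rangle^{2}}\sim 1$). The correct move, which the paper makes, is to apply Fefferman--Phong to $\nu_{h}=(1-|p_{h}|^{2})/h$, which \emph{is} uniformly in $S^{2}$ (Lemma~\ref{lemma: symbol in S2}) and becomes non-negative after a low-frequency cutoff; this yields an $O(h)$ lower bound, which is all that is needed.

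The reduction to $s=0$, however, has a more serious gap. The first-order term in the Weyl expansion of $\langle\xi\rangle^{s}\,\#^{w}\,p_{h}\,\#^{w}\,\langle\xi\rangle^{-s}$ is, up to constants, $h\sum_{j}\xi_{j}\langle\xi\rangle^{-2}(\partial_{x_{j}}q)\,p_{h}$, and here $\partial_{x_{j}}q\in S^{2}$. As above, the resulting symbol has $S^{0}$-seminorms of order $h^{1/2}$ only, so Calder\'on--Vaillancourt gives an $O(h^{1/2})$ correction, destroying the sharp $1+Ch$ bound. The paper does \emph{not} reduce to $s=0$; instead it sets $T_{h}=E^{(s)}p_{h}^{w}E^{(-s)}$ and computes the Weyl symbol of $T_{h}^{\ast}T_{h}$ directly as a five-fold composition, showing via Lemmata~\ref{lemma: weyl calculus 1} and~\ref{lemma: weyl calculus 2} that it equals $|p_{h}|^{2}+h\,k_{h}$ with $k_{h}\in S^{0}$ uniformly. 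The crucial cancellation there is that the surviving first-order terms assemble into quantities such as $\bar p_{h}\,\partial_{x_{j}}p_{h}-p_{h}\,\partial_{x_{j}}\bar p_{h}=-2ih|p_{h}|^{2}\,\partial_{x_{j}}(\Im q_{1})$ and $\{\bar p_{h},p_{h}\}=2ih^{2}|p_{h}|^{2}\{\Re q,\Im q_{1}\}$, where the key point is that only $\Im q_{1}\in S^{1}$ appears (the real $S^{2}$ part of $q$ drops out), so one lands in $h\cdot S^{2s}$ without invoking the Gaussian decay. Your single-operator conjugation discards this structure.
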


To prove Theorem 2.1 we shall need some
preliminary results.
\begin{lemma}
  \label{lemma: getting h out}
  {\rm (i)} Let $l \geq 0$ and $r \in S^l$. Then
  $h^{l/2} r\, p_h$ is in $S^0$ uniformly
  in $h$.

  \noindent
  {\rm (ii)} Let $\alpha$ and $\beta$ be multi-indices such that
  $|\alpha+\beta|\geq 1$.  Then, for any $0\leq m \leq 1$, we have
  $\partial_x^{\alpha} \partial_{\xi}^{\beta} p_h = h^m
  \tilde{p}_h^{m\alpha\beta}$, where $\tilde{p}_h^{m\alpha\beta}$ is
  in $S^{2m-|\beta|}$ uniformly in $h$.
\end{lemma}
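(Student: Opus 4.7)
The plan is to prove (i) first by a pointwise scaling argument based on the ellipticity of $q_2$, and then deduce (ii) from (i) together with a Fa\`a di Bruno expansion of $\partial_x^\alpha\partial_\xi^\beta p_h$.

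For (i), the core estimate comes from combining Assumption~\ref{assumption: symbol q} with (\ref{eq: ellipticity}), which yields
\begin{equation*}
  |p_h(x,\xi)| = e^{-h\,\Re q(x,\xi)} \leq e^{-Ch|\xi|^2}, \qquad |\xi|\geq \vartheta.
\end{equation*}
Setting $t := \sqrt{h}\,|\xi|$ and using that $t^l e^{-Ct^2}$ is bounded on $[0,\infty)$ for every $l\geq 0$, I obtain the key bound
\begin{equation*}
  h^{l/2}\,\est{\xi}^{l}\,|p_h(x,\xi)| \leq C_l, \qquad h\in[0,h_{\max}],\ (x,\xi)\in \R^{2n},
\end{equation*}
the region $|\xi|\leq \vartheta$ being trivial because $p_h$ is uniformly bounded there. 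Together with $|r|\leq C\est{\xi}^l$ this handles the seminorm $p_{00}$.

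To control the higher seminorms $p_{\alpha\beta}(h^{l/2}rp_h)$, I apply Leibniz's rule and insert the Fa\`a di Bruno identity
\begin{equation*}
  \partial_x^{\alpha'}\partial_\xi^{\beta'} p_h = p_h\sum_{k=0}^{|\alpha'|+|\beta'|} h^k\, P_{\alpha'\beta',k}(x,\xi),
\end{equation*}
where $P_{\alpha'\beta',k}$ is a polynomial in derivatives of $q$ belonging to $S^{2k-|\beta'|}$ (with $P_{0,0}=1$ and $P_{\alpha'\beta',0}=0$ as soon as $|\alpha'|+|\beta'|\geq 1$, since each derivative must strike $q$). Each resulting contribution takes the shape $h^{l/2+k}\cdot(\text{symbol in }S^{l+2k-|\beta|})\cdot p_h$, and the same scaling argument applied with exponent $l+2k$ in place of $l$ bounds its absolute value by $C\est{\xi}^{-|\beta|}$, which is precisely the $S^0$ seminorm requirement.

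For (ii), the same expansion with $|\alpha+\beta|\geq 1$ gives
\begin{equation*}
  \partial_x^\alpha\partial_\xi^\beta p_h = \sum_{k=1}^{|\alpha|+|\beta|} h^k\, r_k\, p_h, \qquad r_k \in S^{2k-|\beta|}.
\end{equation*}
Since $k\geq 1\geq m\geq 0$, I split $h^k = h^m\cdot h^{k-m}$ and introduce $r_k' := \est{\xi}^{-(2m-|\beta|)} r_k \in S^{2(k-m)}$. Applying (i) with $l=2(k-m)$ gives $h^{k-m} r_k' p_h \in S^0$ uniformly in $h$, and therefore $h^{k-m} r_k p_h \in S^{2m-|\beta|}$ uniformly; summing over $k$ yields the required factorization $\partial_x^\alpha\partial_\xi^\beta p_h = h^m\,\tilde{p}_h^{m\alpha\beta}$ with $\tilde{p}_h^{m\alpha\beta}\in S^{2m-|\beta|}$ uniformly in $h$.

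The main obstacle is merely bookkeeping the Fa\`a di Bruno combinatorics; once the scaling estimate in $t=\sqrt{h}|\xi|$ is isolated, all of the rest follows from Leibniz and a repeated application of (i). The quadratic ellipticity of $q_2$ is crucial here: it is what makes $h|\xi|^2$ the natural scaling variable and thereby produces the factor $h^{l/2}$ (rather than $h^l$) in (i), which in turn is what allows (ii) to extract only a single $h^m$ from each term regardless of how many derivatives have been applied.
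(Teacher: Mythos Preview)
Your proof is correct and follows essentially the same approach as the paper: both arguments rest on the scaling bound $(h\est{\xi}^2)^j|p_h|\leq C_j$ coming from the ellipticity of $q_2$, a Fa\`a di Bruno expansion of $\partial_x^\alpha\partial_\xi^\beta p_h$ into terms $h^k(\partial^{\bullet}q)\cdots(\partial^{\bullet}q)\,p_h$, and the factorization $h^k=h^m\cdot h^{k-m}$ combined with part~(i) applied at $l=2(k-m)$. The only differences are cosmetic---you name the scaling variable $t=\sqrt{h}\,|\xi|$ and invoke Leibniz/Fa\`a di Bruno explicitly, whereas the paper writes out the typical term directly---but the logical skeleton is identical.
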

\begin{proof}
  We have 
  \begin{equation*}
  (h\langle{\xi}^2\rangle)^j e^{- h \Re q(x,\xi)} \leq C_j,
 \quad x \in \R^n, \quad \xi\in \R^n,\quad h\geq 0,
\end{equation*}
  for all $j \in \N$ by (\ref{eq: ellipticity}), hence
  \begin{equation*}
    h^{l/2} |r(x,\xi) p_h(x,\xi)| \leq C, 
    \quad x \in \R^n, \quad \xi\in \R^n,\quad h\geq 0.
  \end{equation*}
  For multi-indices
  $\alpha$ and $\beta$  we observe that $h^{l/2} \partial_x^\alpha
  \partial_\xi^\beta (r(x,\xi) p_h(x,\xi))$ is a linear combination of terms
  of the form
  \begin{align*}
    h^{k+l/2} (\partial_x^{\alpha_0} \partial_\xi^{\beta_0} r) (x,\xi)
    (\partial_x^{\alpha_1} \partial_\xi^{\beta_1} q) (x,\xi)
    \cdots 
    (\partial_x^{\alpha_k} \partial_\xi^{\beta_k} q) (x,\xi) p_h(x,\xi),
  \end{align*}
  for $k\geq 0$, $\alpha_0+\alpha_1+\cdots + \alpha_k= \alpha$, 
  $\beta_0+\beta_1+\cdots
  + \beta_k= \beta$ and the absolute value of this term can be estimated by 
  \begin{align*}
    h^{k+l/2} \langle{\xi}\rangle^{l + 2k - |\beta|} |p_h(x,\xi)|
     \leq C \langle{\xi}\rangle^{-|\beta|}, 
     \quad x \in \R^n, \quad \xi\in \R^n,\quad h\geq 0,
  \end{align*}
  by (2.1), which concludes the proof of {\rm (i)}. For $|\alpha +
  \beta|\geq 1$, $\partial_x^\alpha \partial_\xi^\beta p_h(x,\xi)$ is
  a linear combination of terms of the form
  \begin{align*}
    h^{k} 
    (\partial_x^{\alpha_1} \partial_\xi^{\beta_1} q) (x,\xi)
    \cdots 
    (\partial_x^{\alpha_k} \partial_\xi^{\beta_k} q) (x,\xi) p_h(x,\xi),
  \end{align*}
  for $k\geq 1$, $\alpha_1+\cdots + \alpha_k= \alpha$, $\beta_1+\cdots
  + \beta_k= \beta$, which can be rewritten as $h^m\lambda_h(x,\xi)$, where 
  \begin{align*}
   \lambda_h(x,\xi) = (\partial_x^{\alpha_1} \partial_\xi^{\beta_1} q) (x,\xi) \cdots
    (\partial_x^{\alpha_k} \partial_\xi^{\beta_k} q) (x,\xi) \langle{\xi}\rangle^{2m -2k} 
    \left( h  \langle{\xi}\rangle^{2}\right)^{k-m} p_h(x,\xi).
  \end{align*}
Since $(\partial_x^{\alpha_1} \partial_\xi^{\beta_1} q) \cdots
    (\partial_x^{\alpha_k} \partial_\xi^{\beta_k} q) \in S^{2k - |\beta|}$, we see that $\lambda_h \in S^{2m - |\beta|}$ uniformly in $h$ by using {\rm (i)}.
\end{proof}
From  Weyl Calculus and the previous lemma we have the following
composition results for the symbol $p_h$.
\begin{proposition}
  \label{prop:composition r ph}
  Let $r_h$ be bounded in $S^l$, $l \in \R$,  uniformly in $h$. We then have
  \begin{align}
    \label{eq: r p}
    r_h\ \#^w p_h = r_h p_h + h^{\hf} \lambda_h^{(0)} 
    = r_h p_h + h \lambda_h^{(1)}
    = r_h p_h + \frac{1}{2i} \{r_h, p_h\} + h \tilde{\lambda}^{(0)}_h,\\
    \label{eq: p r}
    p_h\ \#^w r_h = r_h p_h + h^{\hf} \mu_h^{(0)} 
    = r_h p_h + h \mu_h^{(1)}
    = r_h p_h + \frac{1}{2i} \{p_h,r_h\} + h \tilde{\mu}^{(0)}_h,
  \end{align}
  where $\lambda^{(0)}_h$, $\mu_h^{(0)}$, $\tilde{\lambda}^{(0)}_h$,
  and $\tilde{\mu}^{(0)}_h$ are in $S^{l}$ uniformly in $h$ and 
  $\lambda_h^{(1)}$ and $\mu_h^{(1)}$ are in $S^{l+1}$ uniformly in $h$.
\end{proposition}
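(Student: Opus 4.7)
The plan is to apply the Weyl composition formula of Proposition~\ref{prop: Weyl composition} at truncation orders $k=0$ and $k=1$, and at each order to invoke Lemma~\ref{lemma: getting h out}(ii) to extract powers of $h$ from every derivative of $p_h$ that appears. The composition $p_h\,\#^w r_h$ in~(\ref{eq: p r}) will be handled by the same argument with the roles of $r_h$ and $p_h$ exchanged, using $\{p_h,r_h\}=-\{r_h,p_h\}$.

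For the first two equalities in~(\ref{eq: r p}), I would apply Proposition~\ref{prop: Weyl composition} with $k=0$, writing $r_h\,\#^w p_h = r_h p_h + R_0$. The integrand of the oscillatory integral defining $R_0$ is a linear combination of products of one first-order derivative of $r_h$ (which lies in the corresponding $S^{l-|\beta_r|}$ class) with one first-order derivative of $p_h$. By Lemma~\ref{lemma: getting h out}(ii) the latter equals $h^m$ times a symbol in $S^{2m-|\beta_p|}$ uniformly in $h$, so for $m=\tfrac{1}{2}$ the integrand becomes $h^{1/2}$ times a product lying jointly in $S^l$ uniformly in $h$, while for $m=1$ it becomes $h$ times a product lying in $S^{l+1}$ uniformly. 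The standard oscillatory-integral estimates used in the proof of Proposition~\ref{prop: Weyl composition} (Appendix~\ref{appendix: composition}) then deliver the representations $R_0 = h^{1/2}\lambda_h^{(0)}$ with $\lambda_h^{(0)}\in S^l$ and $R_0 = h\lambda_h^{(1)}$ with $\lambda_h^{(1)}\in S^{l+1}$, both uniformly in $h$.

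For the third equality in~(\ref{eq: r p}), I would apply Proposition~\ref{prop: Weyl composition} with $k=1$. The $j=0$ term is $r_h p_h$, and direct evaluation of $(i/2)\,\sigma(D_x,D_\xi;D_y,D_\eta)[r_h(x,\xi)p_h(y,\eta)]|_{y=x,\eta=\xi}$ identifies the $j=1$ term with $\frac{1}{2i}\{r_h,p_h\}$. The remainder $R_1$ contains $\sigma^2$ inside the integrand, so its integrand is a linear combination of products of second-order derivatives of $r_h$ with second-order derivatives of $p_h$; applying Lemma~\ref{lemma: getting h out}(ii) with $m=1$ to the $p_h$-factor in each term shows that every such term is $h$ times a symbol in $S^l$ uniformly in $h$, giving $R_1 = h\tilde{\lambda}_h^{(0)}$ with $\tilde{\lambda}_h^{(0)}\in S^l$ uniformly. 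The main obstacle I anticipate is tracking the uniformity in $h$ of the oscillatory-integral estimates on $R_0$ and $R_1$; it reduces to checking that the output-symbol semi-norms depend on only finitely many of the uniform-in-$h$ semi-norms of $r_h$ and of the symbols $\tilde{p}_h^{m\alpha\beta}$ provided by Lemma~\ref{lemma: getting h out}(ii), which is inherent in the proof of Proposition~\ref{prop: Weyl composition} recalled in the appendix.
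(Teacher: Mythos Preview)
Your proposal is correct and follows essentially the same route as the paper: apply Proposition~\ref{prop: Weyl composition} at $k=0$ and $k=1$, use Lemma~\ref{lemma: getting h out}(ii) with $m=\tfrac12$ or $m=1$ to extract $h^{1/2}$ or $h$ from each derivative of $p_h$ in the remainder, and then estimate the resulting oscillatory integral. The paper is only slightly more explicit at the last step: it packages the integrand as a multiple symbol and invokes Theorem~2.2.5 of \cite{Kumano-go:81} (applied twice, once for each pair of integration variables in $\Sigma$) to conclude that the remainder lies in the claimed symbol class uniformly in $h$, which is precisely the finitely-many-seminorms argument you anticipate.
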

To ease the reading of the article, the proof of
Proposition~\ref{prop:composition r ph} has been placed in
Appendix~\ref{appendix: composition}. We apply the result of
Proposition~\ref{prop:composition r ph} to prove the following lemma.
\begin{lemma}
  \label{lemma: weyl calculus 1}
  We have $\overline{p_h} \ \#^w\, \est{\xi}^{2s}\,  \#^w  p_h 
  - \est{\xi}^{2s} |p_h|^2 = h k_h$ with $k_h$
  in $S^{2s}$ uniformly in $h$.
\end{lemma}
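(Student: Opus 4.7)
The plan is to expand both Weyl compositions using Proposition~\ref{prop:composition r ph} in the Poisson-bracket form~\eqref{eq: r p}, and then to identify a pairwise cancellation at order $h$ that relies crucially on the reality of $q_2$.

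I would first apply the third form of~\eqref{eq: r p} with $r_h=\langle\xi\rangle^{2s}\in S^{2s}$, obtaining
\[
A_h := \langle\xi\rangle^{2s}\#^w p_h = \langle\xi\rangle^{2s} p_h + \tfrac{1}{2i}\{\langle\xi\rangle^{2s},p_h\} + h\tilde\lambda_h^{(0)}, \qquad \tilde\lambda_h^{(0)}\in S^{2s} \text{ uniformly}.
\]
Since $\langle\xi\rangle^{2s}$ is $x$-independent and $\partial_x p_h=-h(\partial_x q)p_h$, the bracket equals $-h(\partial_\xi\langle\xi\rangle^{2s})(\partial_x q)p_h$. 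Since $\bar q$ satisfies Assumption~\ref{assumption: symbol q} just as $q$ does, the version of Proposition~\ref{prop:composition r ph} with $p_h$ replaced by $\overline{p_h}$ applies to $\overline{p_h}\#^w A_h$ with $r_h=A_h\in S^{2s}$ uniformly, yielding
\[
\overline{p_h}\#^w A_h = A_h\,\overline{p_h} + \tfrac{1}{2i}\{\overline{p_h},A_h\} + h\mu_h, \qquad \mu_h\in S^{2s} \text{ uniformly}.
\]

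Next, I would substitute the expansion of $A_h$ into both right-hand terms and use Leibniz to write $\{\overline{p_h},\langle\xi\rangle^{2s}p_h\}=\langle\xi\rangle^{2s}\{\overline{p_h},p_h\}+p_h\{\overline{p_h},\langle\xi\rangle^{2s}\}$ with $\partial_x\overline{p_h}=-h(\partial_x\bar q)\overline{p_h}$. The two a priori $S^{2s+1}$-sized contributions at order $h$ then combine as
\[
-\tfrac{h}{2i}(\partial_\xi\langle\xi\rangle^{2s})(\partial_x q)|p_h|^2 + \tfrac{h}{2i}(\partial_\xi\langle\xi\rangle^{2s})(\partial_x\bar q)|p_h|^2 = -h\,(\partial_\xi\langle\xi\rangle^{2s})(\partial_x\Im q)\,|p_h|^2.
\]
This is the crux of the argument: because $q_2$ is real by Assumption~\ref{assumption: symbol q}, $\Im q=\Im q_1$ lies in $S^1$, so the product belongs to $S^{2s-1}\cdot S^1 = S^{2s}$ and the combined contribution lies in $hS^{2s}$.

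All remaining pieces are also in $hS^{2s}$ by the following mechanism: each spare factor of $h$ arising from a derivative of $p_h$ or $\overline{p_h}$ absorbs two powers of $\langle\xi\rangle$ via the uniform bound $h\langle\xi\rangle^2|p_h|\le C$, which is Lemma~\ref{lemma: getting h out}(i) with $r=\langle\xi\rangle^2$. This disposes of the diagonal bracket $\tfrac{1}{2i}\langle\xi\rangle^{2s}\{\overline{p_h},p_h\}$, an $O(h^2)$ term of the form $h^2\langle\xi\rangle^{2s}\Im(\overline{\partial_\xi q}\,\partial_x q)|p_h|^2$; of the explicit $h\tilde\lambda_h^{(0)}\overline{p_h}$ and $h\mu_h$ remainders, both in $hS^{2s}$; and of the higher-order pieces produced when one expands $\{\overline{p_h},A_h\}$ fully using $A_h-\langle\xi\rangle^{2s}p_h\in hS^{2s+1}$. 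The main obstacle is the careful accounting required to isolate the pairwise cancellation displayed above and to verify that no $S^{2s+1}$-sized contribution survives outside of the $\Im q_1$ piece singled out there.
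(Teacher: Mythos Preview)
Your proof is correct and follows essentially the same approach as the paper's: both expand the two Weyl compositions via Proposition~\ref{prop:composition r ph}, both isolate the cross-terms $\overline{p_h}\{\langle\xi\rangle^{2s},p_h\}+p_h\{\overline{p_h},\langle\xi\rangle^{2s}\}$ and observe that they combine into $-h(\partial_\xi\langle\xi\rangle^{2s})(\partial_x\Im q_1)|p_h|^2\in hS^{2s}$ precisely because $q_2$ is real, and both handle the diagonal bracket $\langle\xi\rangle^{2s}\{\overline{p_h},p_h\}=2ih^2\langle\xi\rangle^{2s}|p_h|^2\{\Re q,\Im q\}$ via Lemma~\ref{lemma: getting h out}. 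The only organizational difference is that the paper decomposes $A_h$ before composing with $\overline{p_h}$, whereas you compose first and then expand $\{\overline{p_h},A_h\}$ by Leibniz; the resulting terms are identical.
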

\begin{proof}
  By Proposition~\ref{prop:composition r ph} we have
  \begin{align*}
    \est{\xi}^{2s}\: \#^w p_h &=
    \est{\xi}^{2s}\:p_h 
    + \frac{1}{2i} \left\{ \est{\xi}^{2s}, p_h\right\} 
    + h \lambda_{1,h}
    =  \est{\xi}^{2s}\:p_h
    + \frac{1}{2i}\sum_{j=1}^n 
    (\d_{\xi_j}\est{\xi}^{2s})\ \d_{x_j} p_h   
    + h \lambda_{1,h},
  \end{align*}
  with $\lambda_{1,h}$ in $S^{2s}$ uniformly in $h$. We then obtain
  \begin{align*}
  \overline{p_h} \ \#^w\, \est{\xi}^{2s}\,  \#^w  p_h
  = \overline{p_h} \ \#^w \left(\mstrut{0.4cm}\right.\!\!
  \est{\xi}^{2s}\:p_h 
    + \frac{1}{2i}\sum_{j=1}^n 
    (\d_{\xi_j} \est{\xi}^{2s})\ \d_{x_j} p_h
    \!\!\left.\mstrut{0.4cm}\right) + h \lambda_{2,h},
  \end{align*}
  with $\lambda_{2,h}$ in $S^{2s}$ uniformly in $h$. 
  By
  Proposition~\ref{prop:composition r ph} we have
  \begin{align*}
     \overline{p_h} \ \#^w \left(\est{\xi}^{2s}\:p_h\right) 
     &= \est{\xi}^{2s} |p_h|^2 
     + \frac{1}{2i} \left\{
     \overline{p_h}, \est{\xi}^{2s} p_h 
     \right\} + h \lambda_{3,h}\\
     &= \est{\xi}^{2s} |p_h|^2 
     + \frac{1}{2i} \sum_{j=1}^n \left(
     (\d_{\xi_j} \overline{p_h}) \est{\xi}^{2s} \d_{x_j} p_h
     - (\d_{x_j} \overline{p_h})\ \d_{\xi_j} (\est{\xi}^{2s} p_h)\right)
    + h \lambda_{3,h},
  \end{align*}
  with $\lambda_{3,h}$ in $S^{2s}$ uniformly in $h$. 
  We also have
  \begin{align*}
     \overline{p_h} \ \#^w\frac{1}{2i}\sum_{j=1}^n 
     (\d_{\xi_j}\est{\xi}^{2s})\ \d_{x_j} p_h
    =\frac{1}{2i}\sum_{j=1}^n   
    \overline{p_h}\ 
    (\d_{\xi_j} \est{\xi}^{2s})  \d_{x_j} p_h + h \lambda_{4,h},
  \end{align*}
  with $\lambda_{4,h}$ in $S^{2s}$, uniformly in $h$, by
  Proposition~\ref{prop:composition r ph}. We have thus obtained
  \begin{align}
    \label{eq: simplified composition}
    \overline{p_h} \ \#^w\, \est{\xi}^{2s}\,  \#^w  p_h
    =\est{\xi}^{2s} |p_h|^2 + \frac{1}{2i} l_{h}^{(1)} 
    + \frac{1}{2i} \sum_{j=1}^n
    l_{h,j}^{(2)} + h \lambda_{5,h},
  \end{align}
  with $\lambda_{5,h}$ in $S^{2s}$ uniformly in $h$, 
  and 
  with
  \begin{align*}
     l_{h}^{(1)}  = \{\ovl{p_h},p_h\} \est{\xi}^{2s}
     =
     \sum_{j=1}^n (
     \d_{\xi_j} \ovl{p_h}\ \d_{x_j} p_h
     -\d_{x_j} \ovl{p_h}\ \d_{\xi_j}p_h)
     \est{\xi}^{2s}
  \end{align*}
  and
  \begin{align*}
    l_{h,j}^{(2)} =
    (\ovl{p_h}  \: \d_{x_j} p_h
    -p_h\ \d_{x_j} \ovl{p_h})
    \ \d_{\xi_j}\est{\xi}^{2s}.
  \end{align*}
  We introduce $\alpha := q_2 + \Re q_1$ and
  $\beta:= \Im q_1$. We have
  \begin{align}
    \label{eq: complex miracle}
     l_{h}^{(1)}  = 2i h^2 |p_h|^2 \est{\xi}^{2s} \ \{\al,\be\}
  \end{align}
  and 
  \begin{align*}
    l_{h,j}^{(2)}  =
    -2 i h |p_h|^2 (\d_{x_j}\beta) \ \d_{\xi_j}\est{\xi}^{2s}.
  \end{align*}
  Since $\al\in S^2$ and $\be \in S^1$ we then have
  $\{\al,\be\}\in S^2$. From Lemma~\ref{lemma: getting
  h out}, we thus obtain $l_{h}^{(1)} = h
  k_{h}^{(1)}$, with $k_{h}^{(1)}$ in $S^{2s}$
  uniformly in $h$. We also have that $l_{h,j}^{(2)} = h
  k_{h,j}^{(2)}$ with $k_{h,j}^{(2)}$ in $S^{2s}$
  uniformly in $h$, which from (\ref{eq: simplified composition}) concludes the proof.
\end{proof}
We shall also need the following lemma.
\begin{lemma}
  \label{lemma: weyl calculus 2}
  We have $\est{\xi}^{s}\, \#^w\, |p_h|^2\, \#^w\, \est{\xi}^{s} -
  \est{\xi}^{2s} |p_h|^2 = h k_h$ with $k_h$ in $S^{2s}$ uniformly in
  $h$.
\end{lemma}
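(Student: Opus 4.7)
The strategy parallels the proof of Lemma~\ref{lemma: weyl calculus 1}. The key initial observation is that $|p_h|^2 = e^{-2h\Re q}$ is itself of the exponential form studied throughout this section, with the real-valued symbol $2\Re q$ satisfying Assumption~\ref{assumption: symbol q} (since so does $\Re q$, by~(\ref{eq: ellipticity})). Consequently, Lemma~\ref{lemma: getting h out} and Proposition~\ref{prop:composition r ph} apply verbatim to $|p_h|^2$ in place of $p_h$.

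The plan is to first apply the analog of~(\ref{eq: r p}) with $r_h=\est{\xi}^s$:
\begin{equation*}
\est{\xi}^s\,\#^w\,|p_h|^2 = \est{\xi}^s|p_h|^2 + \tfrac{1}{2i}\{\est{\xi}^s,|p_h|^2\} + h\tilde{\lambda}_h,\qquad \tilde{\lambda}_h\in S^s\text{ uniformly in }h,
\end{equation*}
noting that $\{\est{\xi}^s,|p_h|^2\} = -2h\sum_j\partial_{\xi_j}\est{\xi}^s\,\partial_{x_j}\Re q\,|p_h|^2 =: hH_h$ with $H_h\in S^{s+1}$ uniformly. Then I would compose on the right by $\est{\xi}^s$. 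Since $\est{\xi}^s$ depends only on $\xi$, Proposition~\ref{prop: Weyl composition} reduces this composition to the asymptotic sum $a\,\#^w\,\est{\xi}^s = \sum_\alpha c_\alpha\,\partial_x^\alpha a\cdot\partial_\xi^\alpha\est{\xi}^s$, with $c_0=1$ and $c_1=i/2$ (since $\partial_x\est{\xi}^s\equiv 0$, so the first-order piece reduces to $\tfrac{1}{2i}\{\cdot,\est{\xi}^s\}= \tfrac{i}{2}\sum_j\partial_{x_j}(\cdot)\,\partial_{\xi_j}\est{\xi}^s$).

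The critical step is then a cancellation. Applying the above expansion to $\est{\xi}^s|p_h|^2$ and using $\partial_{x_j}|p_h|^2=-2h(\partial_{x_j}\Re q)|p_h|^2$, the $|\alpha|=1$ contribution equals $-ih\sum_j\est{\xi}^s(\partial_{\xi_j}\est{\xi}^s)\,\partial_{x_j}\Re q\,|p_h|^2$. On the other hand, the $|\alpha|=0$ contribution of $\frac{h}{2i}H_h\,\#^w\,\est{\xi}^s$ is $\frac{h}{2i}H_h\est{\xi}^s = +ih\sum_j\est{\xi}^s(\partial_{\xi_j}\est{\xi}^s)\,\partial_{x_j}\Re q\,|p_h|^2$, the opposite quantity. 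These two terms each fail individually to lie in $hS^{2s}$---naively they are in $hS^{2s+1}$ only---but their sum is identically zero.

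It then remains to show that all other terms lie in $hS^{2s}$ uniformly. These split into three classes: (i) the $|\alpha|\geq 2$ terms of $(\est{\xi}^s|p_h|^2)\,\#^w\,\est{\xi}^s$, for which Lemma~\ref{lemma: getting h out}(ii) applied to $|p_h|^2$ with $m=1$ yields $\partial_x^\alpha|p_h|^2\in hS^2$ uniformly, placing the $|\alpha|=2$ contribution in $hS^{2s}$ and higher $|\alpha|$ in strictly smaller symbol classes; (ii) the $|\alpha|\geq 1$ terms of $\frac{h}{2i}H_h\,\#^w\,\est{\xi}^s$, for which a direct Leibniz computation shows $\partial_{x_k}H_h\in S^{s+1}$ uniformly (the potentially bad piece $h(\partial_{x_j}\Re q)(\partial_{x_k}\Re q)|p_h|^2$ being rewritten as $(\partial_{x_j}\Re q)\cdot\bigl(h(\partial_{x_k}\Re q)|p_h|^2\bigr)$ and controlled by Lemma~\ref{lemma: getting h out}(i) with $l=2$), so $\{H_h,\est{\xi}^s\}\in S^{2s}$ uniformly and the contribution lies in $hS^{2s}$; (iii) $h\tilde{\lambda}_h\,\#^w\,\est{\xi}^s$, which is in $hS^{2s}$ by standard Weyl calculus. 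The main expected obstacle is the cancellation in the critical step: it relies crucially on the structure $|p_h|^2=e^{-2h\Re q}$ and on $\est{\xi}^s$ being a Fourier multiplier.
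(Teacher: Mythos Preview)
Your argument is correct, but it takes a different route from the paper's. You proceed by two successive binary compositions---first $\est{\xi}^s\,\#^w\,|p_h|^2$ via Proposition~\ref{prop:composition r ph}, then composition on the right with $\est{\xi}^s$ via Proposition~\ref{prop: Weyl composition}---and then track down an explicit cancellation between the $|\alpha|=1$ term of $(\est{\xi}^s|p_h|^2)\,\#^w\,\est{\xi}^s$ and the leading term of $\tfrac{h}{2i}H_h\,\#^w\,\est{\xi}^s$. The paper instead starts from the \emph{triple} oscillatory-integral representation
\[
\rho_h(x,\xi)=\pi^{-2n}\mint{4} e^{i\Sigma(z,t,\zeta,\tau,\xi)}\,\est{\zeta}^{s}\est{\tau}^{s}\,|p_h|^2(x+z+t,\xi)\,dz\,d\zeta\,dt\,d\tau,
\]
Taylor-expands $|p_h|^2(x+z+t,\xi)$ in $z+t$, and observes that the first-order term vanishes \emph{identically} by the symmetry $(\partial_{\tau_j}-\partial_{\zeta_j})(\est{\zeta}^s\est{\tau}^s)\big|_{\zeta=\tau=\xi}=0$. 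In other words, the paper sees the cancellation you found as a manifestation of the symmetry of conjugating by the \emph{same} Fourier multiplier on both sides, and exploits it in one step rather than reconstructing it after the fact. Your approach has the advantage of recycling Proposition~\ref{prop:composition r ph} and making the cancellation algebraically explicit; the paper's approach is shorter and makes the structural reason transparent. One minor simplification in your write-up: for item~(ii) you do not need the Leibniz analysis of $\partial_{x_k}H_h$; since $H_h\in S^{s+1}$ uniformly, standard Weyl calculus already gives $H_h\,\#^w\,\est{\xi}^s-H_h\est{\xi}^s\in S^{2s}$ uniformly, which is all you need once the zeroth-order piece has cancelled.
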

\begin{proof}
  We set $\rho_h = \est{\xi}^{s}\,  \#^w\, |p_h|^2\,  \#^w\, \est{\xi}^{s}$.
  From Weyl calculus we have
  \begin{align*}
    \rho_h(x,\xi)
    = \pi^{-2n}\!\! \mint{4} e^{i\Sigma(z,t,\zeta,\tau,\xi)}
    \est{\zeta}^{s} \est{\tau}^{s}\ |p_h|^2(x+z+t,\xi)\  
    d z \ d \zeta \ d t \ d \tau
  \end{align*}
  where $\Sigma(z,t,\zeta,\tau,\xi) = 2(\inp{z}{\tau-\xi} - \inp{t}{\zeta-\xi})$.
  Arguing as in the proof of Proposition~\ref{prop: Weyl
  composition} in Appendix~\ref{appendix: composition} we write
  \begin{align}
    \rho_h (x,\xi) - &\est{\xi}^{2s} |p_h|^2 (x,\xi)= \pi^{-2n}\!\!
    \mint{4} e^{i\Sigma(z,t,\zeta,\tau,\xi)} \est{\zeta}^{s}
    \est{\tau}^{s}\ ( |p_h|^2(x+z+t,\xi) - |p_h|^2(x,\xi))\ 
    d z \ d \zeta \ d t \ d \tau\nonumber \\
    \label{eq: weyl calc intermediate term}
    &= \frac{i}{2} \pi^{-2n}\sum_{j=1}^{n}  \int\limits_0^1\!\! \mint{4}
    e^{i\Sigma(z,t,\zeta,\tau,\xi)} 
    (\d_{\tau_j} - \d_{\zeta_j}) (\est{\zeta}^{s}\est{\tau}^{s}) \:
    (\d_{x_j} |p_h|^2)(x+r(z+t),\xi)\ d r \
    d z \ d \zeta \ d t \ d \tau, 
  \end{align}
  by a first-order Taylor formula and integrations by
  parts \wrt $\zeta$ and $\tau$. Observing that we have  
  \begin{align*}
    0 &= \frac{i}{2} \sum_{j=1}^{n} (\d_{\tau_j} - \d_{\zeta_j})
    (\est{\zeta}^{s}\est{\tau}^{s}) \
    (\d_{x_j} |p_h|^2)(x,\xi) \left.\mstrut{0.3cm}\right|_{\zeta=\tau=\xi}\\
    &=\frac{i}{2} \pi^{-2n}\sum_{j=1}^{n}  \mint{4}
    e^{i\Sigma(z,t,\zeta,\tau,\xi)} 
    (\d_{\tau_j} - \d_{\zeta_j}) (\est{\zeta}^{s}\est{\tau}^{s}) \
    (\d_{x_j} |p_h|^2)(x,\xi)\ 
    d z \ d \zeta \ d t \ d \tau,
  \end{align*}
  we can proceed as in the proof of Proposition~\ref{prop: Weyl
  composition} (integration by parts \wrt $r$ in (\ref{eq: weyl calc
  intermediate term}) and further integrations by parts \wrt $\zeta$
  and $\tau$) and conclude after noting that $|p_h|^2$ satisfies the
  properties listed in Lemma~\ref{lemma: getting h out} like $p_h$.
\end{proof}
\begin{remark}
  \label{rem: jutification Weyl}
  Note that the use of the Weyl quantization is crucial in the proofs
  of Lemmata~\ref{lemma: weyl calculus 1} and \ref{lemma: weyl calculus
    2}. The use of the standard (left) quantization would only yield a
  result of the form $\overline{p_h} \ \#\, \est{\xi}^{2s}\,  \#\:  p_h - \est{\xi}^{2s} |p_h|^2 = h^\hf k_h$ with
  $k_h$ in $S^{2s}$ uniformly in $h$. Such a result would yield a $h^\hf$
  term in the statement of Theorem~\ref{theorem: sharp estimate} and
  the subsequent analysis would not carry through.
\end{remark}

We now define the symbol $\nu_h (x,\xi) = \frac{1 - |p_h|^2(x,\xi)}{h}$, for
$h >0$, and prove the following lemma.
\begin{lemma}
  \label{lemma: symbol in S2}
  The symbol $\nu_h$ is in $S^2$ uniformly
  in $h$.
\end{lemma}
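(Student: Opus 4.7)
I plan to reduce the statement to Lemma~\ref{lemma: getting h out} applied to $|p_h|^2$, after rewriting $|p_h|^2$ in a form that exposes a real positive-elliptic symbol.

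The key observation is that $|p_h|^2(x,\xi) = e^{-2 h \alpha(x,\xi)}$ where $\alpha := q_2 + \Re q_1$. By Assumption~\ref{assumption: symbol q} and (\ref{eq: ellipticity}), $\alpha \in S^2$ is real-valued and satisfies $\alpha(x,\xi) \geq C|\xi|^2$ for $|\xi| \geq \vartheta$. Hence $|p_h|^2$ obeys precisely the same hypotheses as $p_h$ (with $q$ replaced by $2\alpha$), as was already noted at the end of the proof of Lemma~\ref{lemma: weyl calculus 2}. Consequently both parts of Lemma~\ref{lemma: getting h out} apply to $|p_h|^2$ in the same way as to $p_h$.

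Next I would establish the pointwise bound $|\nu_h(x,\xi)| \leq C \est{\xi}^2$ uniformly in $h$, treating two regions separately. For $|\xi| \geq \vartheta$, ellipticity gives $\alpha \geq 0$, so that $0 \leq 1 - e^{-2h\alpha} \leq 2h\alpha$ (using $1 - e^{-t} \leq t$ for $t \geq 0$), whence $|\nu_h| \leq 2\alpha \leq C \est{\xi}^2$. For $|\xi| \leq \vartheta$, $\alpha$ is bounded, say $|\alpha| \leq M$, and the elementary inequality $|1 - e^{-t}| \leq |t| e^{|t|}$ gives $|1 - e^{-2h\alpha}| \leq 2 h M\, e^{2 h_{\max} M}$, so $|\nu_h| \leq C$ on this compact set. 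Both bounds fit within $|\nu_h| \leq C \est{\xi}^2$.

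For the derivative estimates with $|\alpha+\beta| \geq 1$, I would write
\begin{equation*}
   \partial_x^{\alpha} \partial_{\xi}^{\beta} \nu_h
   = -\frac{1}{h}\, \partial_x^{\alpha} \partial_{\xi}^{\beta} |p_h|^2,
\end{equation*}
and apply part (ii) of Lemma~\ref{lemma: getting h out} (in its form for $|p_h|^2$) with the choice $m = 1$. This yields $\partial_x^{\alpha} \partial_{\xi}^{\beta} |p_h|^2 = h\, \tilde{p}_h^{1\alpha\beta}$ with $\tilde{p}_h^{1\alpha\beta} \in S^{2-|\beta|}$ uniformly in $h$, so that $\partial_x^{\alpha} \partial_{\xi}^{\beta} \nu_h = -\tilde{p}_h^{1\alpha\beta}$ is bounded in $S^{2-|\beta|}$ uniformly in $h$. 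Combining this with the $C \est{\xi}^2$ bound of the previous paragraph shows $\nu_h \in S^2$ uniformly in $h$.

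The main (and only) obstacle is the zeroth-order bound for small $|\xi|$, where $\alpha$ need not be positive and the naive estimate $1 - e^{-t} \leq t$ is not available; this is handled by the elementary majoration $|1 - e^{-t}| \leq |t| e^{|t|}$ combined with $h \leq h_{\max}$. The derivative part is essentially an immediate corollary of Lemma~\ref{lemma: getting h out}(ii).
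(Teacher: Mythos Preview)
Your proof is correct. It differs from the paper's, which is a one-line argument via the integral representation
\[
  \nu_h(x,\xi) = 2\,\Re q(x,\xi)\int_0^1 e^{-2rh\,\Re q(x,\xi)}\,dr,
\]
and then observes that the integrand is in $S^0$ uniformly in $r\in[0,1]$ and $h$ by Lemma~\ref{lemma: getting h out}(i), while $\Re q\in S^2$; the product is thus in $S^2$ uniformly. This handles the zeroth-order bound and all derivative bounds simultaneously, with no case split in $\xi$.

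Your route instead separates the two tasks: the zeroth-order bound is obtained by elementary inequalities (with a split between large and small $|\xi|$), and the derivative bounds are read off directly from Lemma~\ref{lemma: getting h out}(ii) with $m=1$. This is slightly longer but perfectly valid, and it has the minor advantage of making the role of part~(ii) of Lemma~\ref{lemma: getting h out} explicit. The paper's integral formula avoids the low-frequency case analysis altogether, since it never needs the sign of $\Re q$ for small $|\xi|$; you might find that trick worth remembering.
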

\begin{proof}
  We write $\nu_h (x,\xi) = 2 \Re q(x,\xi) \int_0^1 e^{-2 r h \Re
    q(x,\xi)}\: d r$. The integrand is in $S^0$ uniformly in $r \in
  [0,1]$ and $h$ by Lemma~\ref{lemma: getting h out} and $\Re q \in
  S^2$.
\end{proof}
\begin{lemma}
  \label{lemma: pre-sharp estimate}
The symbol $p_h$ is such that 
\begin{align*}
  \scp{(|p_h|^2)^w (x,D_x) u}{u} \leq (1 + C h)
  \|u\|^2,\quad u \in L^2(\R^n),
\end{align*}
for $C\geq 0$, uniformly in $h\geq 0$.
\end{lemma}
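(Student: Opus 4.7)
My strategy is to rewrite the operator so the inequality reduces to a lower bound on the quadratic form of a symbol in $S^2$, and then to invoke the Fefferman--Phong inequality, whose role was anticipated in the introduction.

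First, by the very definition of $\nu_h$ we have $|p_h|^2 = 1 - h\, \nu_h$, hence
\begin{equation*}
  \scp{(|p_h|^2)^w(x,D_x)\, u}{u} = \|u\|^2 - h\, \scp{\nu_h^w(x,D_x)\, u}{u}.
\end{equation*}
It therefore suffices to prove a uniform lower bound
$\scp{\nu_h^w(x,D_x)\, u}{u} \geq -C\|u\|^2$ for $h \in [0, h_{\max}]$.

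Three features of $\nu_h$ are relevant. (i) $\nu_h$ is real-valued since $|p_h|^2 = e^{-2h \Re q}$ is. (ii) By Lemma~\ref{lemma: symbol in S2}, $\nu_h$ lies in $S^2$ uniformly in $h$. (iii) $\nu_h$ is uniformly bounded below. For (iii), the ellipticity (\ref{eq: ellipticity}) together with the fact that $q_1 \in S^1$ and $q_2 \in S^2$ are bounded on $\R^n \times \{|\xi|\leq\vartheta\}$ forces $\Re q \geq -M$ pointwise for some $M \geq 0$. Plugging this into the representation
$\nu_h(x,\xi) = 2 \Re q(x,\xi) \int_0^1 e^{-2 r h \Re q(x,\xi)}\, dr$
used in the proof of Lemma~\ref{lemma: symbol in S2} then yields $\nu_h \geq -M_0$ uniformly for $h \in [0, h_{\max}]$, with $M_0 \geq 0$ explicit in $M$ and $h_{\max}$.

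Finally, the symbol $\nu_h + M_0$ is real-valued, non-negative, and bounded in $S^2$ uniformly in $h$. The Fefferman--Phong inequality, whose constant depends only on finitely many semi-norms of the symbol, then gives
\begin{equation*}
  \scp{(\nu_h + M_0)^w(x,D_x)\, u}{u} \geq -C \|u\|^2, \quad u \in L^2(\R^n),
\end{equation*}
with $C$ independent of $h$. Hence $\scp{\nu_h^w(x,D_x)\, u}{u} \geq -(C+M_0)\|u\|^2$, and the claim of the lemma follows.

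The one point requiring genuine care is the uniformity in $h$: both the pointwise lower bound on $\nu_h$ (giving $M_0$) and the Fefferman--Phong constant (giving $C$) must be independent of $h \in [0,h_{\max}]$. Both are consequences of the $S^2$-uniformity supplied by Lemma~\ref{lemma: symbol in S2}. This is also the reason why the standard G\aa{}rding inequality would not be sharp enough here---one genuinely needs the gain of two derivatives provided by Fefferman--Phong to convert a symbol in $S^2$ into an $L^2$-bound on the quadratic form.
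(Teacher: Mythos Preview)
Your proof is correct and follows the same core strategy as the paper: reduce to a uniform lower bound on the quadratic form of $\nu_h^w(x,D_x)$ and invoke the Fefferman--Phong inequality, using the $S^2$-uniformity from Lemma~\ref{lemma: symbol in S2} to get a constant independent of $h$.

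The only difference lies in how non-negativity of the symbol is arranged. The paper introduces a low-frequency cutoff $\chi$, replaces $q$ by $(1-\chi)q$, and checks that the modified symbol $\tilde\nu_h = h^{-1}(1-|\tilde p_h|^2)$ is \emph{exactly} non-negative; the error from this replacement is absorbed as an $O(h)$ term in $S^0$. You instead observe directly that $\Re q \geq -M$ globally (from ellipticity for $|\xi|\geq\vartheta$ and boundedness of the $S^2$-symbol for $|\xi|\leq\vartheta$), whence $\nu_h \geq -M_0$ uniformly, and then shift by the constant $M_0$. Your route is slightly more direct, avoiding the cutoff manipulation; the paper's route has the minor advantage of working with a genuinely non-negative symbol throughout, which is perhaps cleaner when one later needs the weighted variant in Proposition~\ref{prop: sharp estimate density}. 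Either device is adequate here.
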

\begin{proof}
  Let $\chi \in \Con^\infty_c(\R^n)$, $0\leq \chi \leq 1$,  be such that $\chi(\xi)=1$ if
  $|\xi|\leq \vartheta$.
  Then we write
  \begin{align*}
    |p_h|^2(x,\xi) &= e^{-2h \Re q(x,\xi)}
    =e^{-2h \chi(\xi) \Re q(x,\xi)} e^{-2h (1-\chi(\xi)) \Re q(x,\xi)}
    = (1 + h \mu(x,\xi)) e^{-2h (1-\chi(\xi)) \Re q(x,\xi)},
  \end{align*}
  where $\mu(x,\xi) = - 2\chi(\xi) \Re q(x,\xi) \int_0^1 e^{-2r  h
    \chi(\xi) \Re q(x,\xi)} d r$. From \cite[18.1.10]{Hoermander:V3},
  the symbol $e^{-2r  h \chi(\xi) \Re q(x,\xi)}$ is in $S^0$ uniformly
  in $r$ and $h$; hence the symbol $\mu(x,\xi)$ is in $S^{0}$
  uniformly in $h$.  From \cite[Theorem 18.6.3]{Hoermander:V3} and
  Lemma~\ref{lemma: getting h out}, it is thus sufficient to prove the
  result with $p_h(x,\xi)$ replaced by $\tilde{p}_h(x,\xi) =e^{-h
    (1-\chi(\xi)) q(x,\xi)}$. We set
  \begin{align*}
    \tilde{\nu}_h (x,\xi) := \frac{1}{h}\left(1 - |\tilde{p}_h|^2(x,\xi)\right),
  \end{align*}
  for $h >0$. From (the proof of) Lemma~\ref{lemma: symbol in S2} we
  find that $\tilde{\nu}_h (x,\xi)$ is in $S^2$ uniformly in
  $h$. Since $1 - \chi(\xi) =0$ if $|\xi|\leq \vartheta$ and $\Re
  q(x,\xi) \geq 0$ if $|\xi|\geq \vartheta$, we observe that
  $\tilde{\nu}_h (x,\xi)\geq 0$. Then the Fefferman-Phong inequality
  reads (\cite{FP:78}, \cite[Corollary 18.6.11]{Hoermander:V3})
  \begin{align*}
    \left(\tilde{\nu}_h^w (x,D_x) u, u\right) \geq -C \norm{u}{L^2}^2, \quad u
    \in L^2(\R^n),
  \end{align*}
  for some non-negative constant $C$ that can be chosen uniformly in
  $h$. This yields
  \begin{align*}
    \norm{u}{L^2}^2  - \left((|\tilde{p}_h|^2)^w (x,D_x) u, u\right)\geq -C h  \norm{u}{L^2}^2, \quad u
    \in L^2(\R^n),
  \end{align*}
  which concludes the proof.
\end{proof}

We are now ready to prove Theorem~\ref{theorem: sharp
  estimate}.\\
{\em {\bfseries Proof of Theorem~\ref{theorem: sharp estimate}.}} 
We use the following commutative diagram,
\[
\begin{CD}
  H^s @>{\quad p_h^w(x,D_x)\quad }>> H^s \\
    @V{E^{(s)}}VV   @VV{E^{(s)}}V \\
    L^2 @>{\qquad  T_h \qquad }>> L^2
\end{CD}
\]
and prove that the operator $T_h$ satisfies
$\| T_h\|_{(L^2,L^2)} \leq 1 + C h$.

The Weyl symbol of $T_h^\ast \circ T_h$ is given by 
\begin{align*}
  \sigma_h  = \est{\xi}^{-s}\, \#^w\: \ovl{p_h} \ \#^w\, \est{\xi}^{2s}\,
   \#^w p_h \ \#^w\, \est{\xi}^{-s}.
\end{align*}
By Lemmata~\ref{lemma: weyl calculus 1} and \ref{lemma: weyl calculus 2}, we have
$\sigma_h  = |p_h|^2 + h k_h$, with $k_h$ in $S^0$ uniformly in $h$.
 We note that $k_h(x,\xi)$ is real valued. To
estimate the $L^2$ operator norm of $T_h$ we write
\begin{align*}
  \|T_h u\|^2 
  = \scp{T_h^\ast \circ T_h u}{u}
 &=  \scp{(|p_h|^2)^w (x,D_x)  u}{u} 
  + h \scp{k_h^w(x,D_x) u }{u}\\
  &\leq \scp{(|p_h|^2)^w (x,D_x)  u}{u} + Ch \|u\|^2,
\end{align*}
for $C\geq 0$ \cite[Theorem 18.6.3]{Hoermander:V3}.  The result of
Theorem~\ref{theorem: sharp estimate} thus follows from
Lemma~\ref{lemma: pre-sharp estimate}. \hfill \qedsymbol

Let $m(x)$ be a smooth function that satisfies
  \begin{align}
    \label{eq: density property}
    0< m_{\min} \leq m(x) \leq m_{\max}< \infty,
  \end{align}
  along with all its derivatives.
  With such a function $m$, we define the following norm on $L^2(\R^n)$
  \begin{align*}
    \norm{f}{L^2(\R^n,m\, dx)}^2 = \int\limits_{\R^n} f^2(x)\, m(x) d x, 
  \end{align*}
  which is equivalent to the classical $L^2$ norm.  We shall need the
  following result in Section~\ref{sec: manifold}.
\begin{proposition}
  \label{prop: sharp estimate density}
  There exists a constant $C \geq 0$ such that
  \begin{align*}
    \| p_h^{w}(x,D_x) u\|_{L^2(\R^n,m\, dx)} 
    \leq (1 + C h) \, \norm{u}{L^2(\R^n,m\, dx)},\qquad u \in L^2(\R^n),
  \end{align*}
  holds for all $h\geq 0$.
\end{proposition}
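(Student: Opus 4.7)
The plan is to adapt the proof of Theorem~\ref{theorem: sharp estimate} to the weighted setting, replacing the isometry $E^{(s)}$ by multiplication by $\sqrt{m}$, which I denote by $N$. By (\ref{eq: density property}), $N$ is an isometry from $L^2(\R^n, m\,dx)$ onto $L^2(\R^n)$. Setting $T_h := N\, p_h^w(x,D_x)\, N^{-1}$, the proposition is equivalent to the unweighted bound $\|T_h\|_{(L^2, L^2)} \leq 1 + Ch$, and it is natural to mimic the diagram argument used for Theorem~\ref{theorem: sharp estimate}.

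I would then compute the Weyl symbol of $T_h^* T_h = N^{-1}\, \overline{p_h}^w\, N^2\, p_h^w\, N^{-1}$. Since $N^2$ is multiplication by $m$, a $\psi$DO with Weyl symbol $m(x)$, this reads
\begin{align*}
\sigma_h = \frac{1}{\sqrt{m}} \,\#^w\, \overline{p_h} \,\#^w\, m \,\#^w\, p_h \,\#^w\, \frac{1}{\sqrt{m}}.
\end{align*}
The goal is to show $\sigma_h = |p_h|^2 + h k_h$ with $k_h \in S^0$ real-valued, uniformly in $h$. Given this, Lemma~\ref{lemma: pre-sharp estimate} combined with \cite[Theorem 18.6.3]{Hoermander:V3} yields $\scp{T_h^* T_h u}{u} \leq (1 + C'h)\|u\|^2$, and extracting the square root concludes (at the cost of an adjustment of the constant).

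The identity for $\sigma_h$ reduces to two analogs of Lemmata~\ref{lemma: weyl calculus 1} and~\ref{lemma: weyl calculus 2}, respectively: (A) $\overline{p_h}\,\#^w\, m\,\#^w\, p_h = m|p_h|^2 + h k_h^{(1)}$, and (B) $(1/\sqrt{m})\,\#^w\, (m|p_h|^2)\,\#^w\,(1/\sqrt{m}) = |p_h|^2 + h k_h^{(2)}$, with $k_h^{(1)}, k_h^{(2)} \in S^0$ uniformly in $h$. For (A) I would transcribe the proof of Lemma~\ref{lemma: weyl calculus 1} with $\est{\xi}^{2s}$ replaced by $m(x)$: the identity (\ref{eq: complex miracle}), $\{\overline{p_h}, p_h\} = 2ih^2|p_h|^2\{\alpha,\beta\}$, is unchanged and produces an $h^2 m|p_h|^2\{\alpha,\beta\} \in hS^0$ term by Lemma~\ref{lemma: getting h out}(i). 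The analog of $l_{h,j}^{(2)}$ vanishes outright because $\d_{\xi_j} m = 0$; the only residual cross-term $(\d_{\xi_j}\overline{p_h})p_h - \overline{p_h}\,\d_{\xi_j}p_h = 2ih|p_h|^2\,\d_{\xi_j}\beta$, paired with $\d_{x_j}m \in S^0$, sits in $hS^0$. For (B) I would mirror the oscillatory-integral argument of Lemma~\ref{lemma: weyl calculus 2}: a first-order Taylor expansion of $m|p_h|^2$ in the integration variables, followed by integration by parts in the dual variables, delivers the $hS^0$ remainder, using that $1/\sqrt{m} \in S^0$ has bounded derivatives and that $m|p_h|^2$ inherits the $h$-uniform symbol estimates of Lemma~\ref{lemma: getting h out}.

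The hard part will be (B). The outer symbols in Lemma~\ref{lemma: weyl calculus 2} depend only on $\xi$, whereas $1/\sqrt{m}$ depends only on $x$, so the integration-by-parts step must be reorganized, exchanging the roles of $\d_x$ and $\d_\xi$. The sharp $hS^0$ precision (rather than the weaker $h^{1/2}S^0$ one would obtain from a single application of Proposition~\ref{prop:composition r ph}) hinges on Lemma~\ref{lemma: getting h out}(ii), which pulls an extra factor of $h$ out of every $\d_\xi$-derivative landing on $|p_h|^2$. Once (B) is verified, the remainder of the argument follows the template of Theorem~\ref{theorem: sharp estimate}.
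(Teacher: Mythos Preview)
Your approach is correct but takes a different route from the paper. The paper does not conjugate by $\sqrt{m}$ to reduce to the unweighted case. Instead it applies the Fefferman--Phong inequality directly to the weighted symbol
\[
\tilde{\nu}_h(x,\xi) := \frac{m(x)}{h}\bigl(1 - |\tilde{p}_h|^2(x,\xi)\bigr),
\]
which is nonnegative and in $S^2$ uniformly in $h$ (here $\tilde{p}_h$ is the cutoff version from Lemma~\ref{lemma: pre-sharp estimate}). This immediately yields
\[
\|u\|^2_{L^2(\R^n,m\,dx)} - \bigl((m|\tilde{p}_h|^2)^w u,u\bigr) \geq -C h \|u\|^2_{L^2(\R^n,m\,dx)},
\]
since $(m^w u,u) = \|u\|^2_{L^2(\R^n,m\,dx)}$. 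Then Lemma~\ref{lemma: density conjugation}, which is exactly your step~(A), gives $(m|\tilde{p}_h|^2)^w = (\tilde{p}_h^w)^\ast\, m\, \tilde{p}_h^w + h\,\lambda_h^w$ with $\lambda_h\in S^0$, and the left-hand side becomes $\|\tilde{p}_h^w u\|^2_{L^2(\R^n,m\,dx)}$ up to an $O(h)$ error. No analogue of your step~(B) is needed.

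So the two proofs share step~(A), but the paper absorbs the weight $m$ into the symbol fed to Fefferman--Phong, bypassing the conjugation and the extra symbol identity~(B). Your route has the appeal of reusing Lemma~\ref{lemma: pre-sharp estimate} verbatim, at the price of one more composition lemma; the paper's route is slightly shorter and shows that the Fefferman--Phong step itself is flexible enough to carry the weight.
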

\begin{proof}
  We follow the proof of Lemma~\ref{lemma: pre-sharp estimate} and use $\tilde{p}_h(x,\xi)$ in place of $p_h(x,\xi)$. We then set 
  \begin{align*}
    \tilde{\nu}_h (x,\xi) := \frac{m(x)}{h}
    \left(1 - |\tilde{p}_h|^2(x,\xi)\right),
  \end{align*}
  Then $\tilde{\nu}_h (x,\xi)\geq 0$ is in $S^2$ uniformly in $h$. The
  Fefferman-Phong inequality yields
  \begin{align*}
    \left(\tilde{\nu}_h^w (x,D_x) u, u\right) \geq -C \norm{u}{L^2}^2 
    \geq -C' \norm{u}{L^2(\R^n,m\, dx)}^2 , \quad u
    \in L^2(\R^n).
  \end{align*}
  This yields
  \begin{align*}
    \norm{u}{L^2(\R^n,m\, dx)}^2  
    - \left((m|\tilde{p}_h|^2)^w (x,D_x) u, u\right)
    \geq -C' h  \norm{u}{L^2(\R^n,m\, dx)}^2, \quad u
    \in L^2(\R^n),
  \end{align*}
  By Lemma~\ref{lemma: density conjugation} just below, we have
  \begin{align*}
    \left((m|\tilde{p}_h|^2)^w (x,D_x) u, u\right) 
    = \left(\ovl{\tilde{p}_h}^w(x,D_x) \circ m \circ (\tilde{p}_h^w(x,D_x) u 
      , u \right) + h (\lambda^w(x,D_x) u, u), 
  \end{align*}
  with $\lambda(x,\xi)$ in $S^0$ uniformly in $h$ and where $m$ stands
  for the associated multiplication operator here.  We
  conclude since $\ovl{\tilde{p}_h}^w(x,D_x) =
  (\tilde{p}_h^w(x,D_x))^\ast$.
\end{proof}
\begin{lemma}
  \label{lemma: density conjugation}
  Let $f \in \Cinf(\R^n)$ be bounded along with all its derivatives. We have
  \begin{align*}
    \ovl{p_h}\, \#^w f\, \#^w p_h - f |p_h|^2 = h \lambda_h,
  \end{align*}
  with $\lambda_h$ in $S^0$ uniformly in $h$.
\end{lemma}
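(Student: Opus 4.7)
The plan is to follow the proof of Lemma~\ref{lemma: weyl calculus 1}, substituting $f(x)$ (a symbol in $S^0$ depending only on $x$) for $\langle\xi\rangle^{2s}$. As in that proof, the crucial algebraic ingredient is the ``complex miracle'' $q - \bar q = 2i\beta$, which implies $\partial_{\xi_j}(q-\bar q) = 2i\partial_{\xi_j}\beta \in S^0$ since $\beta = \Im q_1 \in S^1$.

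First I apply Proposition~\ref{prop:composition r ph} with $r_h = f \in S^0$ to write
\[
f\,\#^w p_h = f p_h + \tfrac{1}{2i}\{f,p_h\} + h\lambda_{1,h},
\]
with $\lambda_{1,h} \in S^0$ uniformly in $h$. Composing on the left by $\overline{p_h}$ and using the analog of Proposition~\ref{prop:composition r ph} for $\overline{p_h}$ (which holds since $\overline{p_h} = e^{-h\bar q}$ has the same structure and Lemma~\ref{lemma: getting h out} applies to it unchanged) on the $h\lambda_{1,h}$ piece, the problem reduces to analyzing $\overline{p_h}\,\#^w(f p_h)$ and $\overline{p_h}\,\#^w \tfrac{1}{2i}\{f,p_h\}$ modulo $hS^0$.

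For the first piece, the $\overline{p_h}$-version of Proposition~\ref{prop:composition r ph} applied with $r_h = f p_h \in S^0$ uniformly gives
\[
\overline{p_h}\,\#^w(f p_h) = f|p_h|^2 + \tfrac{1}{2i}\{\overline{p_h}, f p_h\} + h\lambda_{3,h}.
\]
The Leibniz identity $\{\overline{p_h}, f p_h\} = \{\overline{p_h}, f\}p_h + f\{\overline{p_h}, p_h\}$, combined with $\partial_{\xi_j}\overline{p_h} = -h(\partial_{\xi_j}\bar q)\overline{p_h}$ and the complex miracle $\{\overline{p_h},p_h\} = 2ih^2|p_h|^2\{\alpha,\beta\}$ (with $\alpha = q_2 + \Re q_1$), yields
\[
\tfrac{1}{2i}\{\overline{p_h}, f p_h\} = -\tfrac{h}{2i}|p_h|^2\gamma + h^2 f|p_h|^2\{\alpha,\beta\},
\]
with $\gamma = \sum_j(\partial_{\xi_j}\bar q)(\partial_{x_j}f) \in S^1$; the second term lies in $hS^0$ by Lemma~\ref{lemma: getting h out}(i) with $l=2$. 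An analogous computation using $\{f,p_h\} = -\sum_j(\partial_{x_j}f)(\partial_{\xi_j}p_h) = h\delta\, p_h$, with $\delta = \sum_j(\partial_{x_j}f)(\partial_{\xi_j}q) \in S^1$, shows $\overline{p_h}\,\#^w\tfrac{1}{2i}\{f,p_h\} = \tfrac{h}{2i}|p_h|^2\delta + hS^0$.

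Summing yields
\[
\overline{p_h}\,\#^w f\,\#^w p_h - f|p_h|^2 = \tfrac{h}{2i}|p_h|^2(\delta - \gamma) + hS^0,
\]
and the crucial cancellation
\[
\delta - \gamma = \sum_{j}(\partial_{x_j}f)\,\partial_{\xi_j}(q - \bar q) = 2i\sum_{j}(\partial_{x_j}f)(\partial_{\xi_j}\beta)
\]
converts this to $h|p_h|^2\sum_j(\partial_{x_j}f)(\partial_{\xi_j}\beta)$, which lies in $hS^0$ since $\partial_{\xi_j}\beta \in S^0$. The main obstacle I expect is the careful bookkeeping of the intermediate contributions $-\tfrac{h}{2i}|p_h|^2\gamma$ and $+\tfrac{h}{2i}|p_h|^2\delta$: taken individually each is only of order $h^{1/2}$ in $S^0$ (since $h^{1/2}\gamma|p_h|^2$ and $h^{1/2}\delta|p_h|^2$ are in $S^0$ uniformly by Lemma~\ref{lemma: getting h out}(i)), and the required $hS^0$ bound emerges only from their combination through the complex-miracle identity $\delta - \gamma \in S^0$.
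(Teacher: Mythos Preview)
Your proof is correct and follows essentially the same route as the paper's: both expand $f\,\#^w p_h$ to first order via Proposition~\ref{prop:composition r ph}, compose on the left with $\overline{p_h}$, and rely on the cancellation $\partial_{\xi_j}(q-\bar q)=2i\,\partial_{\xi_j}\beta\in S^0$ (your $\delta-\gamma$ identity is exactly the paper's observation that $\overline{p_h}\,\partial_{\xi_j}p_h-p_h\,\partial_{\xi_j}\overline{p_h}=-2ih|p_h|^2\partial_{\xi_j}\beta$). The only cosmetic difference is notation; for your ``analogous computation'' step, note that the cleanest justification is to view $(\partial_{x_j}f)\,\partial_{\xi_j}p_h\in S^{-1}$ uniformly (Lemma~\ref{lemma: getting h out}(ii) with $m=0$) and apply the $\overline{p_h}$-version of Proposition~\ref{prop:composition r ph} with $l=-1$, which immediately gives the $hS^0$ remainder.
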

\begin{proof}
  From Proposition~\ref{prop:composition r ph} we have
  \begin{align*}
    f \, \#^w p_h = f p_h - \frac{1}{2 i} 
    \sum_{j=1}^n (\d_{x_j} f)\ \d_{\xi_j} p_h 
    + h \lambda_{1,h},
  \end{align*}
  with $\lambda_{1,h}$ in $S^0$ uniformly in $h$. 
  By Proposition~\ref{prop:composition r ph} we also have
  \begin{align*}
    \ovl{p_h}\, \#^w (f p_h) &=  f |p_h|^2  + \frac{1}{2 i} \{\ovl{p_h},
    f p_h \} + h \lambda_{2,h}\\
    &= f |p_h|^2  + \frac{1}{2 i} \sum_{j=1}^n 
    \left( (\d_{\xi_j} \ovl{p_h})(\d_{x_j} f)  p_h 
    + (\d_{\xi_j} \ovl{p_h}) f (\d_{x_j} p_h) 
    - (\d_{x_j} \ovl{p_h})  f (\d_{\xi_j} p_h)\right) + h \lambda_{2,h},   
  \end{align*}
  with $\lambda_{2,h}$ in $S^0$ uniformly in $h$, and
  \begin{align*}
    \ovl{p_h}\, \#^w \left( (\d_{x_j} f) \d_{\xi_j} p_h\right) = 
    \ovl{p_h} (\d_{x_j} f) \d_{\xi_j} p_h + h\mu_{j,h},\quad j=1,\dots,n,
  \end{align*}
  with $\mu_{j,h}$ in $S^0$ uniformly in $h$.
  It follows that 
  \begin{align*}
    \ovl{p_h}\, \#^w f \, \#^w p_h = f |p_h|^2  
    - \frac{1}{2 i} \sum_{j=1}^n (\d_{x_j}f) (\ovl{p_h}\, \d_{\xi_j} p_h 
    - p_h\, \d_{\xi_j} \ovl{p_h})
    +\frac{1}{2 i} \, f \, \{\ovl{p_h}, p_h\}
    + h\lambda_{4,h},
  \end{align*}
  with $\lambda_{4,h}$ in $S^0$ uniformly in $h$. With the notation of
  the proof of Lemma~\ref{lemma: weyl calculus 1} (see expression
  (\ref{eq: complex miracle})) we have
  \begin{align*}
    \{\ovl{p_h}, p_h\} = 2i h^2 |p_h|^2\ \{\al,\beta\} =h
  k_{h}^{(1)},  
  \end{align*}
  with $k_{h}^{(1)}$ in $S^0$ uniformly in $h$ by Lemma~\ref{lemma: getting
  h out}, since $\beta$ is in
  $S^1$ and $\al \in S^2$.
  We also have 
  \begin{align*}
    \sum_{j=1}^n \d_{x_j}f(\ovl{p_h}\, \d_{\xi_j} p_h 
    -  p_h\d_{\xi_j} \ovl{p_h})  = h k_{h}^{(2)},
  \end{align*} 
  with $k_{h}^{(2)}$ in $S^0$ uniformly in $h$ by Lemma~\ref{lemma:
  getting h out}.
\end{proof}

\section{Multi-product representation: stability and convergence}
\label{sec: convergence}

We are interested in a representation of the solution operator
for the following parabolic Cauchy problem
\begin{align}
  \label{eq: cauchy pb 1}
  \d_t u + q^w(t,x,D_x) u &=0, \ \ \ 0< t\leq T,\\
  u \mid_{t=0} &= u_0 \in H^s(\R^n).
  \label{eq: cauchy pb 2}
\end{align} 
Here the symbol $q(t,x,\xi)$ is assumed to satisfy
Assumption~\ref{assumption: symbol q} uniformly \wrt the evolution
parameter $t$ and to remain in a bounded domain in $S^2$ as $t$
varies. We then note that the result of the previous section remains
valid in this case, \ie, the constant $C$ obtained in
Theorem~\ref{theorem: sharp estimate} is uniform \wrt $t$.  We denote
by $U(t',t)$ the solution operator to the evolution problem (\ref{eq:
  cauchy pb 1}).

Following \cite{LeRousseau:04}, we introduce the following
approximation of $U(t,0)$. With
$\P=\{t^{(0)},t^{(1)},\dots,t^{(N)}\}$, a subdivision of $[0,T]$ with
$0=t^{(0)}< t^{(1)}<\dots <t^{(N)}=T$, we define the following
multi-product
\begin{align}
  \label{eq: multi-product}
    \W_{\P,t} := \left\{
      \begin{array}{ll}
        P_{(t,0)} & \text{if }\ 0\leq t\leq t^{(1)},\\
        P_{(t,t^{(k)})} {\displaystyle \prod_{i=k}^{1}} 
	P_{(t^{(i)},t^{(i-1)})} 
        & \text{if }\ t^{(k)}\leq t\leq t^{(k+1)}.
      \end{array}
    \right.
  \end{align}
  where  $P_{(t',t)}$ is the $\psi$DO with Weyl symbol
  $p_{(t',t)}$ given by $p_{(t',t)}:= e^{-(t'-t)q(t,x,\xi)}$ for $t'\geq t$:
  \begin{align*}
    P_{(t',t)} v(x) = p_{(t',t)}^w(x,D_x) v(x) = 
    (2\pi)^{-n}\iint e^{i\inp{x-y}{\xi}} e^{-(t'-t)q(t,(x+y)/2,\xi)} 
  v (y)\: d y \: d \xi.
  \end{align*}
  We shall prove the convergence of $\W_{\P,t}$
  to $U(t,0)$ in some operator norms as well as its strong
  convergence.

\subsection{Stability}

As a consequence of the estimate proven in Theorem~\ref{theorem: sharp estimate}
we have the following proposition.
\begin{proposition}
  \label{prop:Hs norm under control}
  Let $s \in \R$. There exists $K\geq 0$ such that for every subdivision
  $\P$ of $[0,T]$, we have
  \begin{align*}
    \forall t \in [0,T], \ \  \norm{\W_{\P,t}}{\sob{s}{s}} \leq e^{KT}.
  \end{align*}
\end{proposition}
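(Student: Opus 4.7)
The plan is to view $\W_{\P,t}$ as a finite product of operators of the form $P_{(t',t'')} = p_{(t',t'')}^w(x,D_x)$ with $t' - t'' \geq 0$, and to apply the sharp Sobolev estimate of Theorem~\ref{theorem: sharp estimate} to each factor. Since the symbol $q(t,x,\xi)$ is assumed to satisfy Assumption~\ref{assumption: symbol q} uniformly in $t$, the constant $C$ appearing in Theorem~\ref{theorem: sharp estimate} can be chosen independent of the base time $t$, so the bound
\begin{equation*}
  \|P_{(t',t'')}\|_{(H^s,H^s)} \leq 1 + C(t'-t'')
\end{equation*}
holds uniformly in $t'' \in [0,T]$ for all $t' \geq t''$ in $[0,T]$ (with $h_{\max} = T$).

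Next, I would apply submultiplicativity of the operator norm to the definition (\ref{eq: multi-product}). For $t \in [t^{(k)}, t^{(k+1)}]$ one has
\begin{equation*}
  \|\W_{\P,t}\|_{(H^s,H^s)}
  \leq \|P_{(t,t^{(k)})}\|_{(H^s,H^s)} \prod_{i=1}^{k} \|P_{(t^{(i)},t^{(i-1)})}\|_{(H^s,H^s)}
  \leq \bigl(1+C(t-t^{(k)})\bigr) \prod_{i=1}^{k} \bigl(1+C(t^{(i)}-t^{(i-1)})\bigr).
\end{equation*}
Using $1+x \leq e^x$ for $x \geq 0$ and telescoping the exponents, the right-hand side is bounded by
\begin{equation*}
  \exp\Bigl( C(t-t^{(k)}) + C\sum_{i=1}^k (t^{(i)}-t^{(i-1)}) \Bigr) = e^{Ct} \leq e^{CT}.
\end{equation*}
Setting $K := C$ yields the claim, and the bound is uniform over all subdivisions $\P$ and all $t \in [0,T]$.

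There is no genuine obstacle here beyond invoking the right estimate: the whole content of the proposition is packed into the sharp bound $1 + Ch$ of Theorem~\ref{theorem: sharp estimate}. As Remark~\ref{rem: jutification Weyl} already anticipates, the corresponding $1 + Ch^{1/2}$ estimate available from standard quantization would give $\prod (1 + C\sqrt{\Delta t_i})$, which is not uniformly bounded as $|\P| \to \infty$; it is precisely the linear-in-$h$ character of the Weyl-quantization estimate that allows the telescoping to survive refinement of the subdivision. One only needs to be mindful that the constant $C$ in Theorem~\ref{theorem: sharp estimate} depends on bounds on a finite family of semi-norms of $q$, and those bounds are uniform in $t$ by the assumption that $q(t,\cdot,\cdot)$ remains in a bounded set of $S^2$; this justifies using the same $C$ for every factor in the product.
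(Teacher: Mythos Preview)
Your proof is correct and follows essentially the same approach as the paper: apply Theorem~\ref{theorem: sharp estimate} to each factor, use submultiplicativity, and bound the product via $1+x\leq e^{x}$ (the paper phrases this last step as taking logarithms). Your version is marginally sharper in that you telescope only up to $t$ and obtain $e^{Ct}$ before bounding by $e^{CT}$, but this makes no difference to the stated result.
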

\begin{proof}
  By Theorem~\ref{theorem: sharp estimate}, there exists $C\geq 0$ such
  that we have $\norm{P_{(t',t)}}{\sob{s}{s}}\leq 1+ C(t'-t)$ for
  all $t',t \in [0,T]$, $t'>t$; we then obtain
  \begin{equation*}
    \norm{\W_{\P,t}}{\sob{s}{s}}\leq \prod_{i=0}^{N-1} (1+C
    (t^{(i+1)}-t^{(i)} )).
  \end{equation*}
  Setting $U_\P = \ln
  \left(\prod_{i=0}^{N-1} (1+C(t^{(i+1)}-t^{(i )}))\right)$, we then have
  $U_\P \leq \sum_{i=0}^{N-1} C (t^{(i+1)}-t^{(i )}) = C T$. We thus obtain
  $\| \W_{\P,t}\|_{(H^{s},H^{s})} \leq e^{CT}$.
\end{proof}

\subsection{Convergence}
To obtain a convergence result we shall need the following assumption
on the regularity of the symbol $q(t,x,\xi)$ \wrt the evolution
parameter $t$.
\begin{assumption}
  \label{assumption: regularity q}
  The symbol $q(t,x,\xi)$ is in $\Con^{0,\alpha}([0,T],S^2(\R^n\times \R^n))$, i.e.,
  H\"older continuous \wrt $t$ with values in $S^2$, in the sense
  that, for some $0< \alpha \leq 1$,
  \begin{align*}
    q(t',x,\xi) - q(t,x,\xi) = (t'-t)^\alpha\  \tilde{q}(t',t,x,\xi),\  \ 
    0 \leq t \leq t' \leq T,
  \end{align*}
  with $\tilde{q}(t',t,x,\xi)$ 
  in $S^2$ uniformly in $t'$ and $t$.
\end{assumption}

We now give some regularity properties for the approximation Ansatz
$\W_{\P,t}$ we have introduced.
\begin{lemma}
  \label{lemma: t' -> pt't lipschitz}
  Let $s \in \R$ and $t'',t \in [0,T]$, with $t < t''$.  The map $t'
  \mapsto P_{(t',t)}$, for $t' \in [t,t'']$, is Lipschitz
  continuous with values in ${\mathcal L}(H^{s}(\R^n),H^{s-2}(\R^n))$. More
  precisely there exists $C>0$ such that, for all $v \in
  H^{s}(\R^n)$ and $t^{(1)}, t^{(2)} \in [t,t'']$,
  \begin{align*}
    \left\|\left(P_{(t^{(2)},t)}-P_{(t^{(1)},t)}\right)(v)\right\|_{H^{s-2}} 
    \leq C \left|t^{(2)}-t^{(1)}\right| \|v\|_{H^{s}}.
  \end{align*}
\end{lemma}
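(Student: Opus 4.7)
The plan is to write the difference as an integral over $[t^{(1)},t^{(2)}]$ of the $t'$-derivative of $P_{(t',t)}$. Since the Weyl symbol is $p_{(t',t)}(x,\xi)=e^{-(t'-t)q(t,x,\xi)}$, differentiating under the quantization gives $\partial_{t'}p_{(t',t)}=-q(t,x,\xi)\,p_{(t',t)}(x,\xi)$, so that
\begin{align*}
  P_{(t^{(2)},t)}-P_{(t^{(1)},t)}
  =-\int_{t^{(1)}}^{t^{(2)}} \bigl(q(t,\cdot)\,p_{(\tau,t)}\bigr)^{w}(x,D_x)\,d\tau,
\end{align*}
understood as an identity in ${\mathcal L}(H^s,H^{s-2})$ once we have uniform bounds on the integrand. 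Passing the $\tau$-derivative inside the oscillatory-integral definition of the Weyl quantization is justified by the uniform $S^{2}$ estimates we establish below, so this step is essentially bookkeeping.

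The key verification is that $q(t,\cdot)\,p_{(\tau,t)}$ lies in $S^{2}(\R^n\times\R^n)$ uniformly in $\tau\in[t,t'']$ (and uniformly in $t\in[0,T]$, which we need in the sequel anyway). This is a direct consequence of Lemma~\ref{lemma: getting h out}: part (i), applied with $l=0$ and $r\equiv 1$, gives $p_h\in S^{0}$ uniformly in $h:=\tau-t$; part (ii), applied with $m=0$, gives $\partial_x^{\gamma_1}\partial_\xi^{\gamma_2}p_h\in S^{-|\gamma_2|}$ uniformly in $h$ whenever $|\gamma_1|+|\gamma_2|\geq 1$. Since $q(t,\cdot)\in S^{2}$ uniformly in $t$ by Assumption~\ref{assumption: symbol q}, the Leibniz rule applied to the product $q\,p_{(\tau,t)}$ places every derivative $\partial_x^{\alpha_1}\partial_\xi^{\alpha_2}(qp_{(\tau,t)})$ in $S^{2-|\alpha_2|}$ with constants independent of $\tau\in[t,t'']$.

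With that uniform symbol estimate in hand, the standard $H^s\to H^{s-2}$ continuity of Weyl operators with symbols bounded in $S^{2}$ (obtained for instance by conjugating with $E^{(2-s)}$ on the left and $E^{(-s)}$ on the right and invoking the $L^{2}$-boundedness theorem for $S^{0}$ symbols) yields a constant $C>0$, independent of $\tau$, such that $\|(q(t,\cdot)p_{(\tau,t)})^{w}(x,D_x)\|_{{\mathcal L}(H^s,H^{s-2})}\leq C$ for all $\tau\in[t,t'']$. Integrating the displayed identity over $[t^{(1)},t^{(2)}]$ yields
\begin{align*}
  \bigl\|\bigl(P_{(t^{(2)},t)}-P_{(t^{(1)},t)}\bigr)v\bigr\|_{H^{s-2}}
  \leq C\,\bigl|t^{(2)}-t^{(1)}\bigr|\,\|v\|_{H^s},
\end{align*}
as required. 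The only nontrivial point is the uniform $S^{2}$ bound on $q\,p_{h}$, and this has already been packaged into Lemma~\ref{lemma: getting h out}, so no fresh Fefferman--Phong or Weyl-calculus input is needed here.
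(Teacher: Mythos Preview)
Your proof is correct and follows essentially the same route as the paper: write the difference as $-\int_{t^{(1)}}^{t^{(2)}}(q(t,\cdot)\,p_{(\tau,t)})^{w}\,d\tau$, observe that the Weyl symbol $q\,p_h$ lies in $S^2$ uniformly in $h$, and conclude by the $H^s\to H^{s-2}$ boundedness of $\Psi^2$ operators (Theorem~18.6.3 in \cite{Hoermander:V3}). You supply more detail on the uniform $S^2$ bound via Lemma~\ref{lemma: getting h out}, whereas the paper simply asserts it, but the argument is the same.
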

\begin{proof}
  We simply write  
  \begin{align*}
    (P_{(t^{(2)},t)}-P_{(t^{(1)},t)})(v) (x) =
    - (2 \pi)^{-n} \int\limits_{t^{(1)}}^{t^{(2)}}\!\!\iint e^{i \inp{x-y}{\xi}}
    e^{- (t'-t)q(t,(x+y)/2,\xi)}\
        q(t,(x+y)/2,\xi)\ \ v(y)\ d y\ d \xi \ d t'.
  \end{align*}
  We thus obtain a $\psi$DO whose Weyl symbol is in $S^2$ uniformly \wrt $t^{(2)}$ and $t^{(1)}$ and conclude with Theorem 18.6.3 in \cite{Hoermander:V3}.
\end{proof}
\begin{lemma}
  \label{lemma: cont p, dt p}
  Let $s \in \R$, $t'',t \in [0,T]$, with $t <  t''$, and let $v
  \in H^{s}(\R^n)$.  Then the map $t'\mapsto P_{(t',t)}(v)$ is in
  $\Con^0([t,t''],H^{s}(\R^n)) \cap \Con^1([t,t''],H^{s-2}(\R^n))$.
\end{lemma}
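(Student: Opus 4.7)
The plan is to prove the two regularity claims separately.

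\textbf{Continuity with values in $H^s$.} For $v$ in the dense subspace $H^{s+2}(\R^n) \subset H^s(\R^n)$, the preceding Lemma~\ref{lemma: t' -> pt't lipschitz}, applied with Sobolev index $s+2$ in place of $s$, gives the Lipschitz estimate
\begin{align*}
    \|(P_{(t^{(2)},t)}-P_{(t^{(1)},t)})v\|_{H^{s}} \leq C|t^{(2)}-t^{(1)}|\,\|v\|_{H^{s+2}}, \quad t^{(1)},t^{(2)} \in [t,t''].
\end{align*}
For general $v \in H^s$, I would approximate $v$ by a sequence $v_n \in H^{s+2}$ with $v_n \to v$ in $H^s$ and combine the Lipschitz estimate for $v_n$ with the uniform operator bound $\|P_{(t',t)}\|_{(H^s,H^s)} \leq 1 + C(t'-t)$ from Theorem~\ref{theorem: sharp estimate} in a standard three-$\varepsilon$ argument to conclude continuity of $t' \mapsto P_{(t',t)} v$ into $H^s$.

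\textbf{Differentiability with values in $H^{s-2}$.} I would guess the derivative to be the Weyl quantization $A_{t'} := a_{t'}^w(x,D_x)$ of the symbol
\begin{align*}
    a_{t'}(x,\xi) := -q(t,x,\xi)\, p_{(t',t)}(x,\xi),
\end{align*}
and then verify it. Since $q(t,\cdot,\cdot) \in S^2$ and $p_{(t',t)} \in S^0$ uniformly in $t' \in [t,t'']$ (by Lemma~\ref{lemma: getting h out} applied with $h=t'-t$), $a_{t'}$ lies in $S^2$ uniformly in $t'$, so $A_{t'}:H^s \to H^{s-2}$ is uniformly bounded. From the pointwise identity $\partial_{t'} p_{(t',t)}(x,\xi) = -q(t,x,\xi) p_{(t',t)}(x,\xi)$ and the fundamental theorem of calculus at the symbol level, combined with the linearity and boundedness of Weyl quantization $S^2 \to \mathcal{L}(H^s,H^{s-2})$, I would obtain the integral representation
\begin{align*}
    \bigl(P_{(t'+h,t)} - P_{(t',t)}\bigr) v = \int_0^h A_{t'+\tau} v\, d\tau \quad \text{in } H^{s-2}(\R^n).
\end{align*}
Differentiability at $t'$ and continuity of the derivative then reduce to continuity of $\tau \mapsto A_\tau$ in $\mathcal{L}(H^s,H^{s-2})$, which in turn follows from continuity of $\tau \mapsto a_\tau$ in the Fr\'{e}chet space $S^2$ (again via boundedness of Weyl quantization as a map $S^2 \to \mathcal{L}(H^s,H^{s-2})$).

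\textbf{Main obstacle.} The point that requires care is upgrading the obvious pointwise convergence $a_{t'+h} \to a_{t'}$ to convergence in every semi-norm of $S^2$. I would argue that each semi-norm of $a_{t'+h} - a_{t'}$ can be controlled by a factor $|h|$ times a uniform $S^2$ bound, using Lemma~\ref{lemma: getting h out} to handle the powers of $\langle\xi\rangle$ created by differentiating the exponential $p_{(t',t)}$ and estimating the resulting integrand uniformly in $\tau \in [0,h]$. Once this uniform continuity in $S^2$ is established, the desired $C^1$-regularity into $H^{s-2}$ follows at once from the boundedness of the Weyl quantization.
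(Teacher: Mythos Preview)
Your treatment of the $C^0$ part is correct and matches the paper's argument exactly: Lipschitz continuity from $H^{s+2}$ to $H^s$ (Lemma~\ref{lemma: t' -> pt't lipschitz}) plus the uniform $H^s$ bound (Theorem~\ref{theorem: sharp estimate}) plus density.

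The $C^1$ part has a genuine gap. Your reduction to norm continuity of $\tau\mapsto A_\tau$ in $\mathcal{L}(H^s,H^{s-2})$, and in turn to continuity of $\tau\mapsto a_\tau$ in the Fr\'echet space $S^2$, fails at the endpoint $\tau=t$. Indeed $a_t=-q$ and $a_{t+h}-a_t=-q(e^{-hq}-1)$; choosing $\xi$ with $h\langle\xi\rangle^2\sim 1$ one finds $\langle\xi\rangle^{-2}|q(e^{-hq}-1)|$ bounded below by a positive constant independent of $h$, so the zeroth $S^2$-seminorm of $a_{t+h}-a_t$ does not tend to $0$. Lemma~\ref{lemma: getting h out} cannot rescue this: writing $a_{t'+h}-a_{t'}=h\int_0^1 q^2\,p_{(t'+rh,t)}\,dr$, the integrand is uniformly in $S^4$, not $S^2$, and the single factor $h$ you have extracted is already spent. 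What your integral representation actually needs is only \emph{strong} continuity of $\tau\mapsto A_\tau v$ in $H^{s-2}$ for fixed $v\in H^s$. The paper obtains this by repeating the density trick from the $C^0$ part: one first shows, exactly as in Lemma~\ref{lemma: t' -> pt't lipschitz}, that $t'\mapsto \partial_{t'}P_{(t',t)}=A_{t'}$ is Lipschitz as a map into $\mathcal{L}(H^{s+2},H^{s-2})$ (this costs four derivatives, consistent with $q^2 p_h\in S^4$), and then combines this with the uniform bound $A_{t'}\in\mathcal{L}(H^s,H^{s-2})$ and density of $H^{s+2}$ in $H^s$ to get the required strong continuity. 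Replacing your last paragraph by this argument fixes the proof.
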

\begin{proof}
  Let $t^{(1)} \in [t,t'']$ and let $\varepsilon>0$.  Choose $v_1 \in
  H^{s+2}(\R^n)$ such that $\| v - v_1\|_{H^{s}}\leq \varepsilon$.
  Then for $t^{(2)} \in [t,t'']$
  \begin{align}
    \left\| P_{(t^{(2)},t)}(v) - P_{(t^{(1)},t)}(v)\right\|_{H^{s}}
    &\leq 
    \left\|P_{(t^{(2)},t)}(v-v_1)\right\|_{H^{s}}
    + \left\|P_{(t^{(1)},t)}(v-v_1)\right\|_{H^{s}}
    + \left\|P_{(t^{(2)},t)}(v_1) - P_{(t^{(1)},t)}(v_1)\right\|_{H^{s}}
    \nonumber\\
    \label{estimate: Hs cont}
    &\leq 2 (1+ C (t''-t)) \varepsilon + C\left|t^{(2)}-t^{(1)}\right| \| v_1\|_{H^{s+2}}.
    \end{align}
    The continuity of the map follows. Differentiating
    $P_{(t',t)}(v)$ \wrt $t'$, we can prove that the resulting map
    $t' \mapsto \d_{t'}P_{(t',t)}(v)$ is Lipschitz continuous with
    values in ${\mathcal L}(H^{s+2}(\R^n),H^{s-2}(\R^n))$ following the proof of
    Lemma~\ref{lemma: t' -> pt't lipschitz}: there exists $C>0$ such
    that for all $w \in H^{s+2}(\R^n)$
    \begin{align*}
      \left\|(\d_{t'} P_{(t^{(2)},t)}
      -\d_{t'}P_{(t^{(1)},t)})(w)\right\|_{H^{s-2}} \leq C \left|t^{(2)}-t^{(1)}\right|
      \|w\|_{H^{s+2}}.
    \end{align*}
    Here $\d_{t'} P_{(t^{(i)},t)}$ means $\d_{t'} P_{(t',t)}
    \!\!\left.\mstrut{0.28cm}\right|_{t'=t^{(i)}}$.  We also see that
    the map $v \mapsto \d_{t'}P_{(t',t)}(v)$ is continuous from
    $H^{s}(\R^n)$ into $H^{s-2}(\R^n)$ with bounded continuity module:
    with $v \in H^{s}(\R^n)$, we make a similar choice as above for
    $v_1 \in H^{s+2}(\R^n)$ and obtain an estimate for
    \[ \left\| \d_{t'} P_{(t^{(2)},t)}(v) - \d_{t'}
    P_{(t^{(1)},t)}(v)\right\|_{H^{s-2}}
    \]
    of the same form as in (\ref{estimate: Hs cont}).
 \end{proof}
 Gathering the results of the previous lemmata we obtain the following
 regularity result for the Ansatz $\W_{\P,t}$.
\begin{proposition} 
  Let $s \in \R$, let $u_0 \in H^{s}(\R^n)$.  Then the map $\W_{\P,t}
  (u_0)$ is in $\Con^0([0,T],H^{s}(\R^n))$ and piecewise
  $\Con^1([0,T],H^{s-2}(\R^n))$. 
\end{proposition}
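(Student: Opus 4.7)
The plan is to reduce the regularity of $\W_{\P,t}(u_0)$ on each subinterval of the subdivision $\P$ to an application of Lemma~\ref{lemma: cont p, dt p}, and then to verify matching at the breakpoints so that global $\Con^0$ continuity is obtained.

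First, I observe that on the closed subinterval $[t^{(k)},t^{(k+1)}]$ (with the convention $t^{(-1)}$ absent for $k=0$) the definition (\ref{eq: multi-product}) takes the factored form
\begin{equation*}
  \W_{\P,t} \;=\; P_{(t,t^{(k)})} \circ A_k, \qquad
  A_k := \prod_{i=k}^{1} P_{(t^{(i)},t^{(i-1)})},
\end{equation*}
with $A_0 = \mathrm{Id}$. Since each factor $P_{(t^{(i)},t^{(i-1)})}$ is bounded on $H^s(\R^n)$ by Theorem~\ref{theorem: sharp estimate}, the vector $v_k := A_k u_0$ lies in $H^s(\R^n)$. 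Thus on $[t^{(k)},t^{(k+1)}]$ the map in question reads $t \mapsto P_{(t,t^{(k)})}(v_k)$, and Lemma~\ref{lemma: cont p, dt p} applied with $v=v_k$, $t$ replaced by $t^{(k)}$, $t''$ replaced by $t^{(k+1)}$, immediately gives that this map is in $\Con^0([t^{(k)},t^{(k+1)}],H^{s}(\R^n))\cap \Con^1([t^{(k)},t^{(k+1)}],H^{s-2}(\R^n))$.

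It remains to check continuity at the breakpoints $t^{(k)}$, $k=1,\dots,N-1$, when we glue the pieces together. The key point is that $p_{(t,t)}(x,\xi) = e^{0} = 1$, so $P_{(t^{(k)},t^{(k)})} = \mathrm{Id}$. Therefore the right limit at $t^{(k)}$ computed from the $[t^{(k)},t^{(k+1)}]$ formula equals $A_k u_0$, and the value at $t^{(k)}$ from the $[t^{(k-1)},t^{(k)}]$ formula equals $P_{(t^{(k)},t^{(k-1)})}(A_{k-1} u_0) = A_k u_0$ as well. Combined with the continuity on each closed subinterval established above, this yields $\W_{\P,\cdot}(u_0)\in \Con^0([0,T],H^{s}(\R^n))$. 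The piecewise $\Con^1$ statement with values in $H^{s-2}(\R^n)$ then follows directly from the piecewise result, with possible jumps in the derivative at the nodes $t^{(k)}$ being allowed.

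The main (mild) obstacle is purely bookkeeping: keeping track of the piecewise definition and of the fact that $v_k = A_k u_0$ still lies in $H^s$, so that Lemma~\ref{lemma: cont p, dt p} can be invoked with the sharp Sobolev index $s$. No new estimate beyond those already established is required.
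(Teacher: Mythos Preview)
Your proof is correct and is precisely the intended argument: the paper does not give a separate proof but merely states that the proposition follows by ``gathering the results of the previous lemmata,'' and your write-up is the natural elaboration of this, applying Lemma~\ref{lemma: cont p, dt p} on each subinterval to $v_k=A_k u_0\in H^s$ and checking the matching at the nodes via $P_{(t^{(k)},t^{(k)})}=\mathrm{Id}$.
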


The following energy estimate holds for a function $f(t)$ that is
in $\Con^0([0,T],H^{s}(\R^n))$ and piecewise
$\Con^1([0,T],H^{s-2}(\R^n))$ (by adapting the proof the energy estimate in
Section 6.5 in \cite{CP:82}):
\begin{align}
  \label{eq: energy estimate}
  \|f(t)\|_{H^{s-1}}^2 + \int\limits_0^T\|f(t')\|_{H^{s}}^2 \ d t'
  \leq C \left[\mstrut{0.5cm}\right.
    \|f(0)\|_{H^{s-1}}^2 + \int\limits_0^T \|\left(\d_{t'} + q^w(t',x,D_x) \right)f(t')\|_{H^{s-2}}^2 \ d t'
    \left.\mstrut{0.5cm}\right],
\end{align}
for all  $t \in [0,T]$. Once applied to $(U(t,0) - \W_{\P,t})(u_0)$ with $u_0 \in H^{s}(\R^n)$ we obtain
\begin{align}
  \|(U(t,0) - \W_{\P,t})(u_0)\|_{H^{s-1}}^2 
  &+ \int\limits_0^T\|(U(t,0) - \W_{\P,t})(u_0)\|_{H^{s}}^2 \ d t'
  \leq C  \int\limits_0^T \|\left(\d_{t'} + q^w(t',x,D_x) \right)\W_{\P,t'}(u_0)\|_{H^{s-2}}^2 \ d t' \nonumber\\
  &= C \sum_{j=0}^{N-1} \int\limits_{t^{(j)}}^{t^{(j+1)}} 
  \|\left(\d_{t'} + q^w(t',x,D_x) \right) P_{(t',t^{(j)})}\W_{\P,t^{(j)}}(u_0)\|_{H^{s-2}}^2 \ d t'   \nonumber\\
  \label{eq: applied energy estimate}
  &\leq C \sum_{j=0}^{N-1} \int\limits_{t^{(j)}}^{t^{(j+1)}} 
  \|\left(\d_{t'} + q^w(t',x,D_x) \right) P_{(t',t^{(j)})}\|_{\sob{s}{s-2}}^2\ d t'\
  e^{KT} \|u_0\|_{H^s}^2,
\end{align}
where we have used the stability result of Proposition~\ref{prop:Hs
  norm under control}.  It remains to estimate the Sobolev operator
norm of $\left(\d_{t'} + q^w(t',x,D_x) \right) P_{(t',t)}$, for $t'>t$,
which can be understood as estimating the {\em consistency} of the proposed
approximation Ansatz. This is the object of the following proposition.
\begin{proposition}
  \label{prop: consistency error}
  Let $s \in \R$. There exists $C>0$ such that $\| (\d_{t'} +
  q^w(t',x,D_x)) P_{(t',t)}\|_{\sob{s}{s-2}} \leq C (t'-t)^\al$, for $0\leq t 
\leq t' \leq T$.
\end{proposition}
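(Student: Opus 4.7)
The plan is to compute the Weyl symbol $s_h$ of $(\d_{t'} + q^w(t',x,D_x))\,P_{(t',t)}$ with $h=t'-t$, and to show that $s_h$ belongs to $h^\alpha$ times a bounded subset of $S^2$ uniformly in $h$ and $t$. The desired estimate will then follow from the standard Sobolev continuity $a^w:H^s\to H^{s-2}$ for $a\in S^2$, obtained as usual by conjugating with $E^{(s)}$ and $E^{(s-2)}$ and applying the $L^2$-continuity theorem \cite[Theorem 18.6.3]{Hoermander:V3}.

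Since $\d_{t'} p_{(t',t)} = -q(t)\,p_h$, one has
\[
s_h = -q(t)\,p_h + q(t')\,\#^w\,p_h = \bigl(q(t')-q(t)\bigr)\,p_h + \bigl[q(t')\,\#^w\,p_h - q(t')\,p_h\bigr].
\]
Assumption~\ref{assumption: regularity q} gives $q(t')-q(t) = h^\alpha\,\tilde q(t',t)$ with $\tilde q$ bounded in $S^2$, and since $p_h$ is bounded in $S^0$ uniformly in $h$, the product $\tilde q\,p_h$ is bounded in $S^2$ uniformly in $h$ and $t$; hence the first summand is already of the required form $h^\alpha\,S^2$.

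For the second summand, Proposition~\ref{prop:composition r ph} applied with $r_h = q(t')$ (bounded in $S^2$ uniformly in $t'$) yields
\[
q(t')\,\#^w\,p_h - q(t')\,p_h = \tfrac{1}{2i}\,\{q(t'), p_h\} + h\,\tilde\mu_h,
\]
with $\tilde\mu_h$ bounded in $S^2$; the remainder $h\,\tilde\mu_h$ lies in $h\,S^2 \subset h^\alpha\,S^2$ since $\alpha\leq 1$ and $h$ is bounded. The Poisson-bracket term is the delicate one. Using $\d_{x_j} p_h = -h\,(\d_{x_j} q(t))\,p_h$ and the analogous identity for $\d_{\xi_j}$, one finds
\[
\{q(t'), p_h\} = -h\,\{q(t'), q(t)\}\,p_h.
\]
The key cancellation is now that, by antisymmetry, $\{q(t'),q(t')\}=0$, so
\[
\{q(t'),q(t)\} = \{q(t'),\,q(t)-q(t')\} = -h^\alpha\,\{q(t'), \tilde q(t',t)\},
\]
and therefore $\{q(t'), p_h\} = h^{1+\alpha}\,\{q(t'),\tilde q\}\,p_h$. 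Although $\{q(t'),\tilde q\}$ lies only in $S^3$, the Gaussian decay of $p_h$ produces the pointwise bound $h\,\langle\xi\rangle^3\,|p_h(x,\xi)| \leq C\langle\xi\rangle^2$ uniformly in $h$, with analogous bounds for all derivatives (same mechanism as in the proof of Lemma~\ref{lemma: getting h out}). Thus $h\,\{q(t'),\tilde q\}\,p_h$ is bounded in $S^2$ uniformly in $h$ and $t$, so $\{q(t'), p_h\} \in h^\alpha\,S^2$.

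Gathering the three pieces, $s_h$ belongs to $h^\alpha$ times a bounded subset of $S^2$, uniformly in $h$ and $t$, and the required operator-norm bound follows. The main obstacle is the extraction of the $h^\alpha$ factor from the Poisson-bracket term: a direct application of Proposition~\ref{prop:composition r ph} only provides an $O(h^{1/2})$ contribution of order $2$, which would cap the convergence rate at $\alpha = 1/2$. The antisymmetry $\{f,f\}=0$ combined with the H\"older continuity of $q$ in $t$ is precisely what yields the full $h^\alpha$.
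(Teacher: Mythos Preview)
Your proof is correct, but the paper's route is a bit more direct. Instead of decomposing as
\[
s_h = (q(t')-q(t))\,p_h + \bigl[q(t')\ \#^w p_h - q(t')\,p_h\bigr],
\]
the paper replaces $q(t')$ by $q(t)$ in the \emph{composition} term first, writing
\[
s_h = \bigl[q(t)\ \#^w p_h - q(t)\,p_h\bigr] + h^\alpha\,\tilde q\ \#^w p_h.
\]
The point is that $p_h = e^{-h q(t)}$ is a function of $q(t)$ alone, so $\{q(t),p_h\}=0$ \emph{identically}; thus the bracket term in the expansion of $q(t)\ \#^w p_h$ vanishes outright and Lemma~\ref{lemma: q comp p - qp} gives $q(t)\ \#^w p_h - q(t)\,p_h = h\,\lambda_h$ with $\lambda_h\in S^2$. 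No analysis of $h\{q(t'),\tilde q\}\,p_h$ is needed. Your approach recovers the same conclusion because $\{q(t'),q(t)\}=-h^\alpha\{q(t'),\tilde q\}$, but you then have to invoke the Gaussian absorption argument (Lemma~\ref{lemma: getting h out}) once more to push $h\cdot S^3\cdot p_h$ into $S^2$. Both lines are valid; the paper's choice of where to insert the H\"older error makes the Poisson bracket disappear rather than merely become small, which is why it looks shorter.
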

\begin{proof}
  We have
  \begin{align}
    \d_{t'} P_{(t',t)} u (x) 
    = -(2\pi)^{-n}\iint e^{i\inp{x-y}{\xi}} 
    q(t,(x+y)/2,\xi) e^{-(t'-t)q(t,(x+y)/2,\xi)}\: 
    u (y)\: d y \: d \xi,
  \end{align}
  and thus the operator $(\d_{t'} + q^w(t',x,D_x)) P_{(t',t)}$
  admits 
  \begin{equation*}
    \sigma_{(t',t)} = q(t',.,.)\ \#^w
  p_{(t',t)}-q(t,.,.) p_{(t',t)}
  \end{equation*}
  for its Weyl
  symbol. Since by Assumption~\ref{assumption: regularity q} we have
  \begin{align*}
    q(t',x,\xi) - q(t,x,\xi) = (t'-t)^\alpha\  \tilde{q}(t',t,x,\xi),\  \ 
    0 \leq t \leq t' \leq T,
  \end{align*}
  with $\tilde{q}(t',t,x,\xi)$ in $S^2$ uniformly in $t'$ and
  $t$. We can thus conclude with the following lemma.
\end{proof}
\begin{lemma}
  \label{lemma: q comp p - qp}
  We have $q\ \#^w p_h - q p_h = h \lambda_h$ with $\lambda_h$ in
  $S^2$ uniformly in $h$.
\end{lemma}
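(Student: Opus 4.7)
The plan is to apply the third form of the composition expansion in Proposition~\ref{prop:composition r ph} directly with $r_h := q$ (which is trivially bounded in $S^2$ uniformly in $h$, since it does not depend on $h$) and $l=2$. This yields immediately
\begin{align*}
  q\,\#^w p_h = q\, p_h + \frac{1}{2i}\{q,p_h\} + h\,\tilde{\lambda}^{(0)}_h,
\end{align*}
where $\tilde{\lambda}^{(0)}_h \in S^{2}$ uniformly in $h$. So the whole task reduces to showing that the Poisson-bracket term is either zero or of the form $h\cdot(\text{something in }S^2)$.

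The key observation is that $p_h = e^{-hq}$ is a function of $q$ alone, so its gradient is parallel to $\nabla q$ and the bracket collapses. Explicitly, $\partial_{x_j} p_h = -h(\partial_{x_j} q)\, p_h$ and $\partial_{\xi_j} p_h = -h(\partial_{\xi_j} q)\, p_h$, hence
\begin{align*}
  \{q,p_h\} = \sum_{j=1}^n \bigl[(\partial_{\xi_j} q)(\partial_{x_j} p_h) - (\partial_{x_j} q)(\partial_{\xi_j} p_h)\bigr]
  = -h\, p_h \sum_{j=1}^n \bigl[(\partial_{\xi_j} q)(\partial_{x_j} q) - (\partial_{x_j} q)(\partial_{\xi_j} q)\bigr] = 0.
\end{align*}
This is just the general fact $\{q,f(q)\}=0$, and it is exactly what makes the lemma sharp: a naive application of Proposition~\ref{prop:composition r ph} using the $h\lambda_h^{(1)}$ form would only give a remainder in $S^3$, whereas the present cancellation of the subprincipal term allows one to land in $S^2$.

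Combining these two observations we conclude $q\,\#^w p_h - q\, p_h = h\,\tilde{\lambda}^{(0)}_h$ with $\tilde{\lambda}^{(0)}_h \in S^2$ uniformly in $h$, and we may take $\lambda_h := \tilde{\lambda}^{(0)}_h$. There is no real obstacle here; the only subtle point is recognizing that one must use the ``$h$ times remainder plus Poisson bracket'' version of the composition formula (not the cruder $h\lambda_h^{(1)} \in S^{l+1}$ version) and then notice the vanishing of $\{q,p_h\}$ to gain the extra order of decay in $\xi$.
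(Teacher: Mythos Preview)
Your proof is correct and essentially identical to the paper's own argument: apply the third expansion in Proposition~\ref{prop:composition r ph} with $r_h=q\in S^2$ to obtain $q\,\#^w p_h = q\,p_h + \tfrac{1}{2i}\{q,p_h\} + h\tilde{\lambda}^{(0)}_h$ with $\tilde{\lambda}^{(0)}_h\in S^2$ uniformly, and then observe that $\{q,p_h\}=0$ since $p_h=e^{-hq}$ is a function of $q$. Your additional commentary on why the cruder $h\lambda_h^{(1)}\in S^{l+1}$ version would not suffice is accurate and helpful, but the core argument matches the paper exactly.
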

\begin{proof}
  As in Section~\ref{sec: Hs bound}, we ignore the evolution parameter
  $t$ in the notation.  The result is however uniform \wrt $t$.
  By Proposition~\ref{prop:composition r ph} we have 
  \begin{align*}
    q\ \#^w p_h = q p_h + \frac{1}{2i} \{q, p_h\} + h \lambda_h,
  \end{align*}
  with $\lambda_h$ in $S^{2}$ uniformly in $h$. We note however that $\{q, p_h\}=0$.
\end{proof}
The result of Proposition~\ref{prop: consistency error} and estimate (\ref{eq: applied energy estimate}) yield
\begin{align}
  \label{eq: convergence estimation}
  \|(U(t,0) - \W_{\P,t})(u_0)\|_{H^{s-1}}
  + \left(\mstrut{0.45cm}\right. \!\! \int\limits_0^T\|(U(t,0) - \W_{\P,t})(u_0)\|_{H^{s}}^2 \ d t'\!\!\left.\mstrut{0.45cm}\right)^{\hf}
  \leq C T e^{CT}\: \Delta_\P^\al\: \|u_0\|_{H^s},
\end{align}
where $\Delta_\P = \max_{0\leq j \leq N-1} (t^{j+1} - t^{(j)})$. This
error estimate implies the following convergence results which
provides a representation of $U(t,0)$ by an infinite multi-product of
$\psi$DOs:  $U(t,0) =
\lim_{\Delta_\P\to 0} \W_{\P,t}$. We now state our main theorem.
\begin{theorem}
  \label{theorem: main theorem Rn}
  Assume that $q(t,x,\xi)$ satisfies Assumptions~\ref{assumption:
    symbol q} and \ref{assumption: regularity q}. Then the
    approximation Ansatz $\W_{\P,t}$ converges to the solution
    operator $U(t,0)$ of the Cauchy problem~{\rm (\ref{eq: cauchy pb
    1})--(\ref{eq: cauchy pb 2})} in ${\mathcal L}(H^{s}(\R^n),H^{s-1+r}(\R^n))$
    uniformly \wrt $t$ as $\Delta_\P= \max_{0\leq j \leq N-1} (t^{j+1}
    - t^{(j)})$ goes to 0 with a convergence rate of order
    $\alpha(1-r)$:
  \begin{align*}  
    \| \W_{\P,t}  - U(t,0) \|_{(H^{s},H^{s-1+r})} 
    \leq C \Delta_\P^{\alpha(1-r)}, \quad  t\in [0,T], \quad 0\leq r<1.
  \end{align*}
  The operator $\W_{\P,t}$ also converges to $U(t,0)$ in
  $L^2(0,T,{\mathcal L}(H^s(\R^n),H^s(\R^n)))$ with a convergence rate of order
  $\alpha$:
  \begin{equation*} 
    \left(\mstrut{0.4cm}\right.\!
    \int_0^T \| \W_{\P,t}  - U(t,0) \|_{(H^s,H^s)}^2 d t
    \!\!\left.\mstrut{0.4cm}\right)^\hf \leq C \Delta_\P^{\alpha}.
  \end{equation*}
Furthermore $\W_{\P,t}$ strongly converges to $U(t,0)$
  in ${\mathcal L}(H^{s}(\R^n),H^{s}(\R^n))$ uniformly \wrt $t \in
  [0,T]$.
\end{theorem}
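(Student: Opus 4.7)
The starting point is the master energy bound~(\ref{eq: convergence estimation}),
\[
\sup_{t \in [0,T]}\|(U(t,0) - \W_{\P,t})u_0\|_{H^{s-1}}
+ \left(\int\limits_0^T\|(U(t,0) - \W_{\P,t})u_0\|_{H^{s}}^2 \, d t\right)^{\hf}
\leq C\, \Delta_\P^\alpha\, \|u_0\|_{H^s},
\]
which was already derived in the discussion preceding the theorem. The second assertion of the theorem is simply the integral term, so it requires no further argument. The remaining two assertions will be obtained by combining this estimate with uniform Sobolev bounds.

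For the first assertion, the plan is to interpolate between the $H^{s-1}$-convergence above and a uniform $H^s$ bound. Proposition~\ref{prop:Hs norm under control} already supplies $\|\W_{\P,t}\|_{(H^s,H^s)} \leq e^{KT}$ uniformly in $\P$ and $t$; the analogous bound for $U(t,0)$ follows from the standard parabolic energy theory for~(\ref{eq: cauchy pb 1})--(\ref{eq: cauchy pb 2}) under Assumption~\ref{assumption: symbol q}. Hence $\|\W_{\P,t} - U(t,0)\|_{(H^s,H^s)} \leq C$ uniformly in $\P$ and $t$. The classical interpolation inequality $\|v\|_{H^{s-1+r}} \leq C\|v\|_{H^{s-1}}^{1-r}\|v\|_{H^s}^r$, valid for $0 \leq r < 1$, then yields
\[
\|(\W_{\P,t} - U(t,0))u_0\|_{H^{s-1+r}} \leq C\, \Delta_\P^{\alpha(1-r)}\, \|u_0\|_{H^s},
\]
uniformly in $t \in [0,T]$, which is precisely the first assertion.

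For the strong convergence statement I would proceed by density. Applying the first assertion with $s$ replaced by $s+1$ and $r=0$ gives, for $u_0 \in H^{s+1}(\R^n)$,
\[
\sup_{t \in [0,T]} \|(\W_{\P,t} - U(t,0))u_0\|_{H^s} \leq C\, \Delta_\P^\alpha\, \|u_0\|_{H^{s+1}},
\]
so $\W_{\P,t} u_0 \to U(t,0) u_0$ in $H^s$ uniformly in $t$ for such $u_0$. Since $H^{s+1}(\R^n)$ is dense in $H^s(\R^n)$ and $\|\W_{\P,t} - U(t,0)\|_{(H^s,H^s)}$ is bounded uniformly in $\P$ and $t$, a standard three-epsilon argument extends the convergence to every $u_0 \in H^s(\R^n)$, still uniformly in $t$.

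The only real obstacle is the interpolation step of the first claim, and even that is routine once both the $H^{s-1}$-convergence and the uniform $H^s$-boundedness of $\W_{\P,t} - U(t,0)$ are at hand; the uniform $H^s$-bound for the true propagator $U(t,0)$ itself is not isolated as a lemma earlier in the text, but is a classical consequence of the parabolic estimate already alluded to in Section~\ref{sec: Hs bound}. No new symbolic calculus is needed beyond what has been established.
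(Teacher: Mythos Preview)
Your proposal is correct and follows essentially the same route as the paper: both derive the $L^2$-in-time and $H^{s-1}$ bounds directly from~(\ref{eq: convergence estimation}), obtain the $H^{s-1+r}$ rate by Sobolev interpolation, and prove strong $H^s$ convergence by a density argument using data in $H^{s+1}$ together with the stability bound of Proposition~\ref{prop:Hs norm under control}. The only extra ingredient you make explicit is the uniform $H^s$ bound for $U(t,0)$ needed in the interpolation step, which the paper leaves implicit.
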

\begin{proof}
  The first two results are consequences of (\ref{eq: convergence
  estimation}).  The proof of the first result for $r \neq 0$ follows
  by interpolation between Sobolev Spaces \cite{LM:68}.

  Let $u_0 \in H^{s}(\R^n)$ and let $\varepsilon >0$. For the strong
  convergence in $H^{s}(\R^n)$ we choose $u_1 \in H^{s+1}(\R^n)$ such
  that $\|u_0 - u_1\|_{H^{s}}\leq \varepsilon$. We then write
  \begin{multline*}
    \|\W_{\P,t}(u_0) - U(t,0)(u_0)\|_{H^s} \leq
    \|\W_{\P,t}(u_0-u_1)\|_{H^s}
    + \|\W_{\P,t}(u_1) - U(t,0)(u_1)\|_{H^s}\\
    + \|U(t,0)(u_0-u_1)\|_{H^s}
    \leq C\varepsilon + C\Delta_\P^\alpha  \|u_1\|_{H^{s+1}},
  \end{multline*}
  from the case $r=0$ of the first part of the theorem and from the
  stability of $\W_{\P,t}$ (Proposition~\ref{prop:Hs norm under control}). This
  last estimate is uniform \wrt $t \in [0,T]$ and yields the result.
\end{proof}

\section{Multi-product representation on a compact manifold}
\label{sec: manifold}
\subsection{Notation and setting}
\label{secM: intro}
We shall now consider the case of a parabolic equation on an
$n$-dimensional compact $\Cinf$-Riemannian manifold $(\M,g)$, where
$g$ is a smooth Riemannian metric.  We let $A$ be a second-order
elliptic differential operator on $\M$ whose principal part, $A_2$, is
given by the Laplace-Beltrami operator on $\M$, which reads
\begin{align*}
  A_2 = - {\mathsf g}^{-\hf} \d_i \left( {\mathsf g}^{\hf} g^{ij} \d_j\right), 
\end{align*}
in local coordinates, where ${\mathsf g} = \det (g_{ij})$.  Other
uniformly-elliptic operators can be considered by changing the metric.
We choose here to focus on the differential case instead of the
pseudodifferential case because the full symbol of the operator can
then be completely defined on the manifold $\M$.

We allow the operator $A$  to depend on an evolution
parameter $t$. We shall thus assume that the metric is itself time-dependent, yet continuous \wrt $t$, $g=g(t,x)$, and satisfies
\begin{equation}\
  \label{eqM: property metric}
  0<c\leq g(t,x) \leq C< \infty, \qquad t \in [0,T], \quad x \in \M.
\end{equation}
For the $L^2$ norm on $\M$, we shall use the metric $g(0,x)$ as a reference
metric.  We set ${\mathsf g}_0(x)= {\mathsf g}(0,x)$.  We then denote
by $d v$ the volume form which is given by $d v = {\mathsf g}_0^{1/2}(x)\, d
x$ in local coordinates. The $L^2$-inner product is then given by
$\scp{u}{w} = \int_\M u\, \ovl{w}\, d v$ \cite{hebey:96}.

Since we are going to consider an infinite product of $\psi$DOs, a
little attention should be paid to a finite atlas. We shall use an
atlas $\A=(\theta_i, \psi_i)_{i\in \I}$ of $\M$, $|\I|< \infty$, with
$\psi_i: \theta_i \to \tilde{\theta}_i$, where $\tilde{\theta}_i$ is a
smooth bounded open subset of $\R^n$. For $i \in \I$, we set
\begin{align*}
  \J_i := \{ j \in \I;\ \theta_i \cap \theta_j \neq \emptyset\},\quad 
  \J_i^{(2)} := \{ l \in \J_j;\ j \in \J_i\}, 
\end{align*}
which lists the neighboring charts and the ``second''-neighboring
charts for the chart $(\theta_i, \psi_i)$.  For technical reasons, we
shall assume that there exists a coarser finite atlas $\B=(\Theta_k,
\Psi_k)_{k\in \K}$ of $\M$, $\Psi_k: \Theta_k \to \tilde{\Theta}_k
\subset \R^n$, such that for each chart $(\theta_i, \psi_i)\in \A$
there exists a chart $(\Theta_{k(i)}, \Psi_{k(i)})\in \B$, such that
\begin{equation*}
\bigcup_{l \in \J_i^{(2)}} \theta_l \Subset
\Theta_{k(i)},
\end{equation*}
\ie, $\Theta_{k(i)}$ contains all the ``second''-neighbors of
$\theta_i$. This is always possible by choosing the atlas $\A$
sufficiently fine. We shall denote by $a_i(t)$, $i\in \I$, the Weyl symbol of
$A(t)$ in each local chart $(\theta_i,\psi_i)$.

We set $(\varphi_i)_{i\in \I}$ as a family of $\Cinf$ real-valued
functions defined on $\M$ such that the functions $(\varphi_i^2)_{i\in
  \I}$ form a partition of unity subordinated to the open covering
$(\theta_i)_{i\in \I}$, \ie,
\begin{align*}
  \supp (\varphi_i) \subset \theta_i, \quad 0\leq \varphi_i \leq
  1,\quad i \in \I,\quad \text{and}\quad \sum_{i\in \I} \varphi_i^2 =1.
\end{align*}
We denote
\begin{equation*}
  \tilde{\varphi}_i = (\psi_i^{-1})^\ast \varphi_i = \varphi_i \circ \psi_i^{-1},
\end{equation*}
and similarly, for $l \in \J^{(2)}_i$, we shall set  
\begin{equation*}
  \hat{\varphi}_l = (\Psi_{k(i)}^{-1})^\ast \varphi_l,
\end{equation*}
with $\Psi_{k(i)}$ as above, when there is no possible confusion on $k(i)$.

We set $Q(t)$ as the elliptic operator on $\M$ defined through
\begin{align*}
  A(t) = \sum_{i\in \I} \varphi_i \circ Q(t) \circ \varphi_i.
\end{align*}
The construction of $Q$ can be done recursively: we write $Q=Q_2 + Q_1
+ Q_0$, with $Q_l$ a differential operator of order $l$, $l=0,1,2$ and
obtain
\begin{align*}
  Q_2 = A,\quad  \ Q_1 = - \sum_{i\in \I} [\varphi_i, Q_2] \circ \varphi_i, 
  \quad 
  Q_0 = - \sum_{i\in \I} [\varphi_i, Q_1] \circ \varphi_i. 
\end{align*}
The recursion stops after two iterations since we consider
differential operators here.

In each local chart $(\theta_i,\psi_i)$, $i \in \I$, we denote by $q_i(t,x,\xi)$ the Weyl symbol of $Q(t)$, \ie,
\begin{align*}
  \forall u \in \Cinfc(\theta_i), \quad Q(t) u  = \psi_i^\ast
  \left(q_i^w(t,x,D_x) ((\psi_i^{-1})^\ast u)\right),
\end{align*}
or equivalently
\begin{align*}
  \forall \tilde{u} \in \Cinfc(\tilde{\theta}_i),
  \quad q_i^w(t,x,D_x) \tilde{u} = (\psi_i^{-1})^\ast
  \left(Q(t) (\psi_i^\ast \tilde{u})\right).
\end{align*}
The symbol $q_i(t,x,\xi)$ is uniquely defined since $Q(t)$ is a
differential operator. We also let $\hat{q}_k(t,x,\xi)$ be the Weyl
symbol of $Q(t)$ in the chart $(\Theta_{k}, \Psi_{k})$, $k \in \K$. From~(\ref{eqM:
  property metric}) we then have
\begin{lemma}
  \label{lemmaM: property q}
  In each chart the symbol of $Q(t)$ satisfies the properties of
  Assumption~\ref{assumption: symbol q}.
\end{lemma}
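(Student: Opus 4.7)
The plan is to decompose the symbol as $q_i(t,x,\xi) = q_{i,2}(t,x,\xi) + q_{i,1}(t,x,\xi)$ by order and to verify each item of Assumption~\ref{assumption: symbol q} directly. Since $Q(t) = Q_2(t) + Q_1(t) + Q_0(t)$ is a differential operator of order at most two in each local chart (with $Q_1$ of order one and $Q_0$ of order zero, because each commutator $[\varphi_i, Q_2]$ drops the order by one, and likewise for $[\varphi_i, Q_1]$), the Weyl symbol $q_i(t,x,\xi)$ on $\tilde{\theta}_i \times \R^n$ is a polynomial in $\xi$ of degree at most two whose coefficients are smooth in $x$ and continuous in $t$. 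Since the top-order Weyl symbol of a differential operator coincides with its principal symbol, the order-two contribution in local coordinates is precisely $g^{jk}(t,x)\xi_j\xi_k$, which is real-valued.

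Next, positive-definiteness of the Riemannian metric combined with the uniform bounds~(\ref{eqM: property metric}) yields a constant $c>0$, uniform in $t \in [0,T]$ and $x \in \tilde{\theta}_i$, such that $g^{jk}(t,x)\xi_j\xi_k \geq c|\xi|^2$. This delivers the ellipticity lower bound required in Assumption~\ref{assumption: symbol q}, at least on the open set $\tilde{\theta}_i$.

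Finally, to turn $q_i$ into a symbol defined on all of $\R^n \times \R^n$ and lying in $S^2$ uniformly in $t$, I would extend its coefficients by multiplying them with a cut-off $\tilde{\chi}_i \in \Cinfc(\tilde{\Theta}_{k(i)})$ equal to one on a neighborhood of $\psi_i(\theta_i)$ (this is harmless for the multi-product, since $q_i^w$ is only ever applied between factors that localize the action near $\theta_i$), and by adding $(1-\tilde{\chi}_i^2(x))|\xi|^2$ to the order-two part so that the ellipticity bound $q_{i,2}(t,x,\xi) \geq c'|\xi|^2$ persists on all of $\R^n$ with $c' = \min(c,1)$. The resulting $q_{i,1}$ is a polynomial of degree at most one in $\xi$ with bounded smooth coefficients and therefore lies in $S^1$ uniformly in $t$. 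The only mildly delicate step in the argument is this extension, which must simultaneously preserve smoothness, boundedness of all derivatives, and positivity of the principal symbol; the construction above achieves all three, after which the lemma reduces to the explicit expression for the Weyl symbol of a differential operator with bounded smooth coefficients.
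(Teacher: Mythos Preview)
Your proposal is correct and follows essentially the same reasoning the paper relies on. The paper does not actually give a proof of this lemma: it is stated as an immediate consequence of the uniform metric bounds~(\ref{eqM: property metric}), and your argument is precisely the natural elaboration of that one-line justification (principal symbol $g^{jk}\xi_j\xi_k$ is real and uniformly elliptic by~(\ref{eqM: property metric}), lower-order terms go into $q_{i,1}\in S^1$). One small remark: the extension of $q_i$ from $\tilde{\theta}_i\times\R^n$ to all of $\R^n\times\R^n$ that you carry out here is, in the paper, deferred to the proofs of Theorem~\ref{theoremM: sharp estimate} and Proposition~\ref{prop: consistency manifold}, where it is invoked each time it is needed; including it at this stage is harmless and arguably cleaner.
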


We set
\begin{align*}
    p_{i,(t',t)}(x,\xi) = e^{-(t'-t) q_i(t,x,\xi)}, 
    \quad i \in \I, \quad 0\leq t\leq t'\leq T,\quad  x \in \tilde{\theta}_i,\ \xi \in \R^n.
\end{align*}
With these symbols in $S^0(\tilde{\theta}_i\times \R^n)$, we define the
following  $\psi$DOs on $\M$:
\begin{align}
  \label{eqM: localized propagator}
  P_{i,(t',t)} u &:= \varphi_i \circ \psi_i^\ast \circ 
  p_{i,(t',t)}^w(x,D_x) \circ (\psi_i^{-1})^\ast \circ \varphi_i
  =  \psi_i^\ast \circ  (\tilde{\varphi}_i \circ 
  p_{i,(t',t)}^w(x,D_x) \circ \tilde{\varphi}_i) \circ (\psi_i^{-1})^\ast,
  \\
  \label{eqM: propagator}
  P_{(t',t)} &:= \sum_{i\in \I} P_{i,(t',t)},
\end{align}
where $\varphi_i$ and $\tilde{\varphi}_i$ are understood here as
multiplication operators.    The operator
$P_{(t',t)}$ is the counterpart of the operator $p^w_{(t',t)}(x,D_x)$
introduced in Sections~\ref{sec: Hs bound} and \ref{sec: convergence}.
We shall compose such operators in the form of a multi-product as is
done in Section~\ref{sec: convergence} to obtain a representation of
the solution operator to the following well-posed parabolic Cauchy
problem on $\M$
\begin{align}
  \label{eqM: cauchy pb 1}
  \d_t u + A(t) u &=0, \ \ \ 0< t\leq T,\\
  u \mid_{t=0} &= u_0 \in H^s(\M).
  \label{eqM: cauchy pb 2}
\end{align}
We denote by $U(t',t)$ the solution operator of (\ref{eqM: cauchy pb
1})--(\ref{eqM: cauchy pb 2}) and we define the multi-product operator
$\W_{\P,t}$ as in (\ref{eq: multi-product}) for a subdivision $\P=\{t^{(0)},t^{(1)},\dots,t^{(N)}\}$ of
$[0,T]$:
\begin{align}
  \label{eqM: multi-product}
    \W_{\P,t} := \left\{
      \begin{array}{ll}
        P_{(t,0)} & \text{if }\ 0\leq t\leq t^{(1)},\\
        P_{(t,t^{(k)})} {\displaystyle \prod_{i=k}^{1}} 
	P_{(t^{(i)},t^{(i-1)})} 
        & \text{if }\ t^{(k)}\leq t\leq t^{(k+1)}.
      \end{array}
    \right.
  \end{align}

We shall make the following regularity assumption on the operator $A(t)$,
which is equivalent to  that made in  Section~\ref{sec: convergence}
(Assumption~\ref{assumption: regularity q}).
\begin{assumption}
  \label{assumptionM: regularity q}
  The symbol of $A(t)$ is H\"older continuous of order $\al$,
  $0<\al\leq 1$, \wrt $t$ with values in $S^2$: for each chart
  $(\theta_i, \psi_i)$ we have $a_i \in
  \Con^{0,\al}([0,T],S^2(\R^n\times \R^n))$, in the sense that,
  \begin{align*}
    a_i(t',x,\xi) - a_i(t,x,\xi) = (t'-t)^\al\  \tilde{a}_i(t',t,x,\xi),\  \ 
    0 \leq t \leq t' \leq T,
  \end{align*}
  with $\tilde{a}_i(t',t,x,\xi)$ in $S^2$ uniformly in $t'$ and
  $t$. Note that the same property then holds for the symbol of $A(t)$
  in any chart.
\end{assumption}
This property naturally translates
to the symbols $q_i(t)$, $i\in \I$.

\begin{remark}
  The form we have chosen for the operator $P_{(t',t)}$ can be
  motivated at this point. First, a natural requirement is that
  $P_{(t,t)} = \id$, which is achieved since $\sum_{i\in \I}
  \varphi_i^2 =1$. Second, the consistency analysis of
  Proposition~\ref{prop: consistency error} gears towards having
  $\left. \left(\d_t' P_{(t',t)} - A(t')\circ P_{(t',t)}\right)
  \right|_{t'=t} =0$, which is achieved here thanks to the form we
  have chosen for the differential operator $Q(t)$.
\end{remark}
As in Section~\ref{sec: convergence}, we first need to address the
stability of the multi-product. Here, we shall only consider  the
$L^2$ case.

\subsection{$\bld{L^2}$ Stability}
As in Section~\ref{sec: Hs bound}, we find a sharp estimate of the
$L^2$-norm of the operator $P_{(t',t)}$ over $\M$. 
\begin{theorem}
\label{theoremM: sharp estimate}
There exists a constant $C \geq 0$ such that
\begin{align*}
  \| P_{(t',t)}\|_{(L^2(\M),L^2(\M))} \leq 1 + C (t'-t),
\end{align*}
holds for all $0\leq t \leq t' \leq T$.
\end{theorem}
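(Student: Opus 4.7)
Set $h := t' - t$. The plan is to use a pointwise Cauchy--Schwarz inequality that exploits the partition of unity $\sum_i \varphi_i^2 = 1$, reducing the manifold bound to a chart-by-chart application of Proposition~\ref{prop: sharp estimate density}. I would write $P_{(t',t)} u = \sum_{i \in \I} \varphi_i w_i$, with $w_i := \tilde{P}_i(\varphi_i u)$ and $\tilde{P}_i := \psi_i^\ast \circ p_{i,(t',t)}^w(x,D_x) \circ (\psi_i^{-1})^\ast$ defined on $\theta_i$ (and extended by zero outside, unambiguously since $\varphi_i$ vanishes there). Cauchy--Schwarz in $\R^{|\I|}$ together with $\sum_i \varphi_i^2 = 1$ would then give the pointwise bound $|P_{(t',t)} u(x)|^2 \leq \sum_i |w_i(x)|^2$.

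Next, I would integrate against $dv$ and pull back to each chart to obtain
\begin{equation*}
  \| P_{(t',t)} u\|_{L^2(\M)}^2
  \leq \sum_{i \in \I} \int_{\tilde{\theta}_i} \big| p_{i,(t',t)}^w(x,D_x)(\tilde\varphi_i \tilde u_i)(\tilde x)\big|^2\, \mathsf{g}_{0,i}^{1/2}(\tilde x)\, d\tilde x,
\end{equation*}
where $\tilde{u}_i := (\psi_i^{-1})^\ast u$ and $\mathsf{g}_{0,i}$ is the coordinate expression in $\psi_i$ of the reference density $\mathsf{g}_0$. To use the $\R^n$ estimate, I would extend $\mathsf{g}_{0,i}^{1/2}|_{\tilde{\theta}_i}$ to a smooth function $m_i$ on $\R^n$ satisfying the two-sided bounds~(\ref{eq: density property}); since $\tilde\varphi_i$ has support in $\tilde\theta_i$, the integrand is unchanged if the integration is extended to $\R^n$ with density $m_i$. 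Lemma~\ref{lemmaM: property q} ensures Assumption~\ref{assumption: symbol q} for $q_i(t,\cdot,\cdot)$ uniformly in $t \in [0,T]$ and in $i \in \I$ (the atlas being finite), so Proposition~\ref{prop: sharp estimate density} would yield
\begin{equation*}
  \int_{\R^n} \big| p_{i,(t',t)}^w(x,D_x)(\tilde\varphi_i \tilde{u}_i)\big|^2 m_i\, d\tilde x
  \leq (1 + Ch)^2 \int_{\R^n} |\tilde\varphi_i \tilde{u}_i|^2 m_i\, d\tilde x
  = (1 + Ch)^2 \int_{\theta_i} \varphi_i^2 |u|^2\, dv,
\end{equation*}
using $m_i = \mathsf{g}_{0,i}^{1/2}$ on the support of $\tilde\varphi_i$ in the last step. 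Summing over $i \in \I$ and using $\sum_i \varphi_i^2 = 1$ would give $\|P_{(t',t)} u\|_{L^2(\M)}^2 \leq (1+Ch)^2 \|u\|_{L^2(\M)}^2$, from which the claimed bound follows after absorbing the $h^2$ term for small $h$.

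I do not foresee a substantial obstacle: the decomposition $P_{(t',t)} = \sum_i \varphi_i \tilde{P}_i \varphi_i$ paired with $\sum_i \varphi_i^2 = 1$ is tailored for the Cauchy--Schwarz reduction above, and Proposition~\ref{prop: sharp estimate density} (which itself rests on the Fefferman--Phong inequality) does the remaining work. Uniformity of constants over the (finite) atlas and over $t \in [0,T]$ is automatic. The change-of-variables analysis of Appendix~\ref{sec: appendix change variables} is not required for this stability statement; it will only come in for the consistency estimate treated later.
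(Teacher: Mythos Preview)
Your proof is correct and follows the same route as the paper---localise via $\sum_i \varphi_i^2 = 1$, apply Proposition~\ref{prop: sharp estimate density} chart by chart after extending $q_i$ and the density to $\R^n$, then reassemble---the paper merely phrases the reassembly via duality (pairing with a test function $w$ and Cauchy--Schwarz in $\ell^2(\I)$) rather than your pointwise Cauchy--Schwarz. One small fix: $p_{i,(t',t)}^w(x,D_x)(\tilde\varphi_i \tilde u_i)$ need not be supported in $\tilde\theta_i$ (the $\psi$DO is not local), so extending the domain of integration to $\R^n$ is an inequality rather than an equality, but it goes in the direction you need.
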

Therefore, as in Section~\ref{sec: convergence}, we obtain the
following stability result for $\W_{\P,t}$.
\begin{corollary}
  \label{cor: stability}
  There exists $K\geq 0$ such that for every subdivision
  $\P$ of $[0,T]$, we have
  \begin{align*}
    \forall t \in [0,T], \ \  \norm{\W_{\P,t}}{(L^2(\M),L^2(\M))} \leq e^{KT}.
  \end{align*}
\end{corollary}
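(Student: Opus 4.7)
My plan is to imitate verbatim the argument used for Proposition~\ref{prop:Hs norm under control}, since the corollary is the exact manifold analog of the stability statement on $\R^n$ and we now have Theorem~\ref{theoremM: sharp estimate} at our disposal as the input replacing Theorem~\ref{theorem: sharp estimate}.

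\medskip

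First, I would fix an arbitrary subdivision $\P=\{t^{(0)},\dots,t^{(N)}\}$ of $[0,T]$ and a time $t\in [t^{(k)},t^{(k+1)}]$. By the multiplicativity of operator norms and the definition (\ref{eqM: multi-product}), I have
\begin{equation*}
  \|\W_{\P,t}\|_{(L^2(\M),L^2(\M))}
  \leq \|P_{(t,t^{(k)})}\|_{(L^2,L^2)} \prod_{i=1}^{k} \|P_{(t^{(i)},t^{(i-1)})}\|_{(L^2,L^2)}.
\end{equation*}
Applying Theorem~\ref{theoremM: sharp estimate} to each of the $k+1$ factors yields a constant $C\geq 0$, independent of the subdivision, such that every factor is bounded by $1 + C(\tau'-\tau)$ with $\tau'-\tau$ the length of the corresponding sub-interval.

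\medskip

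Next, taking logarithms and using $\ln(1+x)\leq x$ for $x\geq 0$, I obtain
\begin{equation*}
  \ln \|\W_{\P,t}\|_{(L^2,L^2)} \leq C(t - t^{(k)}) + \sum_{i=1}^k C(t^{(i)}-t^{(i-1)}) = C\,t \leq CT,
\end{equation*}
so that $\|\W_{\P,t}\|_{(L^2,L^2)}\leq e^{CT}$. Setting $K:=C$ gives the claimed bound, which is uniform in both $t\in [0,T]$ and the choice of subdivision $\P$.

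\medskip

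There is no substantive obstacle in this step: all the work has been done in Theorem~\ref{theoremM: sharp estimate}, whose proof absorbs the geometric difficulties (partition of unity, Weyl symbols in each chart, Fefferman--Phong on the density-weighted $L^2$-norm through Proposition~\ref{prop: sharp estimate density}). The only mild point to note is that $C$ depends on $T$ only through the uniform-in-$t$ bounds on the symbols of $Q(t)$ in each local chart, which are provided by Lemma~\ref{lemmaM: property q} together with the continuity of $g$ in $t$ in (\ref{eqM: property metric}); this is precisely what guarantees that the constant in the sharp estimate can be taken independent of the position of the sub-interval inside $[0,T]$, and hence allows the telescoping logarithm bound above.
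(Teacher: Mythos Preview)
Your proposal is correct and takes essentially the same approach as the paper: the corollary is stated without a separate proof, being deduced exactly as you describe by repeating the argument of Proposition~\ref{prop:Hs norm under control} with Theorem~\ref{theoremM: sharp estimate} in place of Theorem~\ref{theorem: sharp estimate}. Your version is in fact slightly sharper (bounding by $e^{Ct}$ rather than $e^{CT}$ before the final step), but this makes no difference to the conclusion.
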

\begin{proof}[{\bfseries{Proof of Theorem~\ref{theoremM: sharp estimate}.}}]
  We let $u$, $w \in L^2(\M)$. We have
  \begin{align*}
    \scp{P_{(t'-t)}u}{w} 
    = \sum_{i\in \I} \int\limits_\M \varphi_i \ovl{w}\,  \psi_i^\ast ( 
  p_{i,(t',t)}^w(x,D_x) ((\psi_i^{-1})^\ast ( \varphi_i u)))\, d v
  = \sum_{i\in \I} \int\limits_{\tilde{\theta}_i} 
  (\ovl{\tilde{\varphi}_i \tilde{w}_i})
  \, p_{i,(t',t)}^w(x,D_x) (\tilde{\varphi}_i \tilde{u}_i)\ 
     {\mathsf g}_0^{1/2}(x)\, d x,
  \end{align*}
  where $\tilde{u}_i$ (\resp $\tilde{w}_i$) is the pullback of
  $u|_{\theta_i}$ (\resp $w|_{\theta_i}$) by $\psi_i^{-1}$.  We now
  extend the symbol $q_i(t,.)$ to $\R^n\times \R^n$ to obtain a symbol
  satisfying Assumption~\ref{assumption: symbol q} like its
  counterpart in Section~\ref{sec: Hs bound}. We still denote by
  $q_i(t,.)$ this extended symbol.  Then, by Proposition~\ref{prop:
    sharp estimate density}, for all $i \in \I$, there exists $C_i\geq
  0$ such that
  \begin{align*}
    \norm{p_{i,(t',t)}^w(x,D_x)}
    {(L^2(\R^n, {\mathsf g}_0^{1/2}\, d x),L^2(\R^n, {\mathsf g}_0^{1/2}\, d x))} 
    \leq 1 + C_i(t'-t),
  \end{align*}
  where ${\mathsf g}_0$ is also extended from $\tilde{\theta}_i$ to
  $\R^n$, yet still preserving Property (\ref{eq: density property}).
  With $C= \max_{i\in \I} C_i$ (recall that $\I$ is finite) we thus
  obtain
  \begin{align*}
    |\scp{P_{(t'-t)}u}{w}|
    \leq   (1+C(t'-t)) \sum_{i\in \I} 
    \norm{\tilde{\varphi}_i \tilde{w}_i}
    {L^2(\tilde{\theta}_i, {\mathsf g}_0^{1/2}\, d x)}\,
    \norm{\tilde{\varphi}_i \tilde{u}_i}
    {L^2(\tilde{\theta}_i, {\mathsf g}_0^{1/2}\, d x)}.
  \end{align*}
  A Cauchy-Schwarz inequality then yields
  \begin{align*}
    |\scp{P_{(t'-t)}u}{w}|
    \leq (1+C(t'-t)) 
    \left(\mstrut{0.4cm}\right.\!\!
    \sum_{i\in \I} \norm{\tilde{\varphi}_i \tilde{w}_i}
    {L^2(\tilde{\theta}_i, {\mathsf g}_0^{1/2}\, d x)}^2
    \!\!\left.\mstrut{0.4cm}\right)^{\hf}
    \left(\mstrut{0.4cm}\right.\!\!
    \sum_{i\in \I} \norm{\tilde{\varphi}_i \tilde{u}_i}
    {L^2(\tilde{\theta}_i, {\mathsf g}_0^{1/2}\, d x)}^2
    \!\!\left.\mstrut{0.4cm}\right)^{\hf}.
  \end{align*}
  Observing that 
  \begin{align*}
    \sum_{i\in \I} \norm{\tilde{\varphi}_i \tilde{u}_i}
    {L^2(\tilde{\theta}_i, {\mathsf g}_0^{1/2}\, d x)}^2
    = \sum_{i\in \I} \int\limits_{\tilde{\theta}_i} \tilde{\varphi}_i^2 \tilde{u}_i^2
    \,{\mathsf g}_0^{1/2}(x)\, d x 
    = \sum_{i\in \I}  \int\limits_{\M} \varphi_i^2 u^2\,  d v
    = \norm{u}{L^2(\M)}^2, 
  \end{align*}
  since $\sum_{i\in \I}   \varphi_i^2 =1$, we find
  \begin{align*}
    |\scp{P_{(t'-t)}u}{w}|
    \leq (1+C(t'-t)) \,
    \norm{w}{L^2(\M)}\, \norm{u}{L^2(\M)},
  \end{align*}
  which concludes the proof.
\end{proof}
\subsection{Consistency estimate}

As in Section~\ref{sec: convergence}, Proposition~\ref{prop:
  consistency error}, for the case of $\R^n$, we shall now analyze the symbol 
of the operator $(\d_{t'} + A(t')) P_{(t',t)}$ and prove the following proposition that corresponds to a consistency estimate. 
\begin{proposition}
  \label{prop: consistency manifold}
  Let $0\leq t \leq t'\leq T$. We have 
  \begin{align*}
    (\d_{t'} + A(t')) \circ P_{(t',t)} = (t'-t)^\al L_{(t',t)},
    \qquad \text{with}\quad L_{(t',t)} \in \Psi^2(\M),
  \end{align*}
  and for all $s \in \R$, there exists $C\geq 0$ such that  
  \begin{align}
    \label{eqM: norm estimate consistency}
    \norm{L_{(t',t)}}{(H^s(\M),H^{s-2}(\M))} \leq C,
  \end{align}
  uniformly in $t'$ and $t$. 
\end{proposition}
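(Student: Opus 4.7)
The plan is to reduce, for each $i\in\I$, to the scalar consistency estimate of Proposition~\ref{prop: consistency error} and then assemble. I would start by decomposing
\begin{equation*}
(\d_{t'} + A(t')) \circ P_{(t',t)} = \sum_{i\in\I} (\d_{t'} + A(t')) \circ P_{i,(t',t)},
\end{equation*}
fixing $i\in\I$, and working in the coarser chart $(\Theta_{k(i)},\Psi_{k(i)})$. Since $\varphi_i$ is supported in $\theta_i$ and $A(t')\varphi_i = \sum_{j\in\J_i}\varphi_j Q(t')\varphi_j\varphi_i$ involves only neighbors of $\theta_i$, whose action is localized inside $\bigcup_{j\in\J_i}\theta_j\Subset\Theta_{k(i)}$, the whole operator $(\d_{t'}+A(t'))\circ P_{i,(t',t)}$ is representable as a single $\psi$DO in $\Theta_{k(i)}$ whose Weyl symbol may then be analyzed.

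The first step is to transport $p_{i,(t',t)}^w$ from chart $\psi_i$ to chart $\Psi_{k(i)}$. This is precisely the role of Appendix~\ref{sec: appendix change variables}: the change-of-variables formula applied to symbols of the form $e^{-hq}$ shows that the Weyl symbol of $\psi_i^\ast\circ p_{i,(t',t)}^w\circ(\psi_i^{-1})^\ast$, read in $\Psi_{k(i)}$, equals $e^{-(t'-t)\hat{q}_{k(i)}(t,x,\xi)}$ modulo a remainder of the form $(t'-t)\, r_{i,(t',t)}$ with $r_{i,(t',t)}\in S^0$ uniformly in $t',t$. Conjugating by the multiplication operators $\hat\varphi_i$, the Weyl symbol of $P_{i,(t',t)}$ in $\Psi_{k(i)}$ is $\hat\varphi_i\,\#^w\,e^{-(t'-t)\hat{q}_{k(i)}(t)}\,\#^w\,\hat\varphi_i$, up to $O(t'-t)$-errors whose Weyl operators are uniformly in $\Psi^0$.

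Next, apply $\d_{t'} + \hat{a}_{k(i)}(t')\,\#^w$ and expand. The key cancellation is already visible at $t'=t$: since $P_{(t,t)} = \sum_i\varphi_i^2 = \id$ and $\d_{t'}P_{(t',t)}\big|_{t'=t} = -\sum_i\varphi_i Q(t)\varphi_i = -A(t)$, the symbol vanishes at $t'=t$. To quantify the rate, one invokes Lemma~\ref{lemma: q comp p - qp}, which gives $\hat q_{k(i)}(t')\,\#^w\,e^{-(t'-t)\hat q_{k(i)}(t)} = \hat q_{k(i)}(t')\,e^{-(t'-t)\hat q_{k(i)}(t)} + (t'-t)\lambda$ with $\lambda\in S^2$ uniformly; Assumption~\ref{assumptionM: regularity q} then yields $\hat q_{k(i)}(t') - \hat q_{k(i)}(t) = (t'-t)^\al\,\tilde q$ with $\tilde q\in S^2$ uniformly, and Lemma~\ref{lemma: getting h out} keeps the exponential factor in $S^0$. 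The relation $A = \sum_j\varphi_j Q\varphi_j$, together with $\sum_j\varphi_j^2 = 1$, is used to replace $\hat a_{k(i)}(t')\,\#^w$ by $\hat q_{k(i)}(t')\,\#^w$ modulo commutator terms that produce harmless symbols in $S^2$.

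The main obstacle will be careful bookkeeping: one must verify that every correction produced by the change of variables of Appendix~\ref{sec: appendix change variables} and by each step of the Weyl composition contributes a symbol in $S^2$ with a prefactor of at least $(t'-t)^\al$. Since $0<\al\leq 1$ and $t'-t\leq T$, any $O(t'-t)$ contribution can be absorbed into $(t'-t)^\al$ times a uniformly bounded $\Psi^2$ symbol. Summing the finitely many contributions indexed by $i\in\I$ and using the standard $H^s(\M)\to H^{s-2}(\M)$ mapping property of $\Psi^2(\M)$ then yields the estimate~(\ref{eqM: norm estimate consistency}).
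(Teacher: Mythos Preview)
Your overall ingredients are the right ones—transporting to the coarser chart via Appendix~\ref{sec: appendix change variables}, invoking Lemma~\ref{lemma: q comp p - qp}, and using Assumption~\ref{assumptionM: regularity q}—but the decomposition you have chosen hides the main difficulty rather than resolving it. The individual operators $(\d_{t'}+A(t'))\circ P_{i,(t',t)}$ are \emph{not} of size $(t'-t)^\alpha$: already at $t'=t$ one has
\begin{equation*}
(\d_{t'}+A(t))\,P_{i,(t',t)}\big|_{t'=t}=-\varphi_i\,Q(t)\,\varphi_i+A(t)\,\varphi_i^2,
\end{equation*}
which is a nonzero first-order operator in general (the two terms even map into different supports). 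The cancellation you correctly identify at $t'=t$ is a \emph{summed} phenomenon over $i\in\I$ and cannot be extracted term by term. The step where you ``replace $\hat a_{k(i)}(t')$ by $\hat q_{k(i)}(t')$ modulo commutator terms that produce harmless symbols in $S^2$'' is precisely where this goes wrong: those terms are $O(1)$ in $\Psi^1$ (the difference $A-Q$ is first order), carry no $(t'-t)$ prefactor, and only disappear after summing over all pieces of the partition.

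The paper's approach is dual to yours: instead of fixing the index of the propagator piece $P_{i,(t',t)}$, it fixes the index $i$ of the \emph{input}, taking $u_i$ supported in $\theta_i$, and then sums over all $j\in\J_i$ contributing to $P_{(t',t)}u_i$. In the coarser chart $\Theta_{k(i)}$ one then has all the relevant $\hat\varphi_j$ and $\hat\varphi_l$ available and can exploit that $\sum_{l\in\J_i^{(2)}}\hat\varphi_l^2=1$ on the support of $(\Psi_{k(i)}^{-1})^\ast u_i$. After the reductions you describe, the leading obstruction becomes a double sum over $l\neq j$ in $\J_i^{(2)}$ whose summand is antisymmetric under $(l,j)\mapsto(j,l)$ and hence vanishes identically. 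This collective cancellation is the heart of the argument and is invisible in your fixed-$i$ decomposition; it is exactly what your ``harmless commutators'' are hiding.
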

\begin{figure}
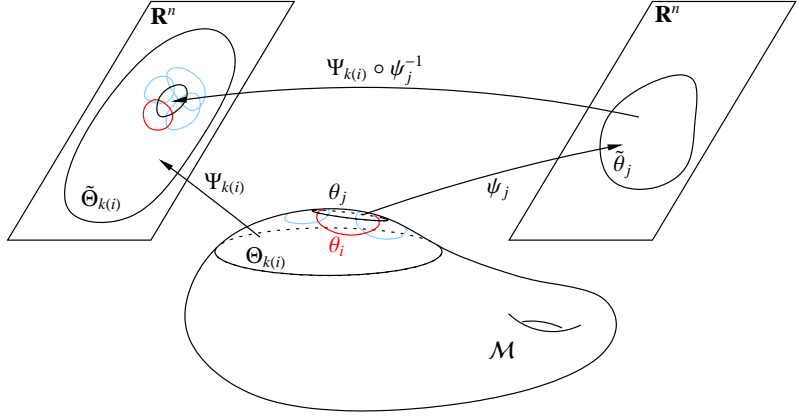

\input charts2.pstex_t
\caption{Change of variables bringing the analysis to the chart
  $(\Theta_{k(i)}, \Psi_{k(i)})$ for the charts $(\theta_i, \psi_i)$
  and $(\theta_j, \psi_j)$, $j \in \J_i$, and their neighboring charts.}
\label{fig: change of variable}
\end{figure} 
\begin{proof}
  For $u \in \Cinf(\M)$ we have $ u =\sum_{i\in \I} \varphi_i^2 u$. 
  It thus suffices to take $u_i \in \Cinf(\M)$, with $\supp(u_i)\subset
  \theta_i$, for some $i \in \I$, and to prove that we have
  \begin{align*}
    (\d_{t'} + A(t')) P_{(t',t)} (u_i) 
    = (t'-t) L_{i,(t',t)} (u_i),\qquad L_{i,(t',t)}, \in \Psi^2(\M),
  \end{align*}
  and that $L_{i,(t',t)}$ satisfies (\ref{eqM: norm estimate
    consistency}) uniformly in $t'$ and $t$. 

  For concision we write $\hat{q}$ for $\hat{q}_{k(i)}$ here. Let us
  recall that $\hat{q}_{k}$ is the Weyl symbol of $Q(t)$ in the chart
  $(\Theta_k, \Psi_k)$, $k \in K$.
  We set $\hat{p}_{(t',t)}(x,\xi) := e^{-(t'-t) \hat{q}(t,
    x,\xi)}$. Making use of the assumption made on the chart
  $(\Theta_{k(i)}, \Psi_{k(i)})$, we consider the action of the change
  of variables $\kappa =\Psi_{k(i)} \circ \psi_j^{-1}$ on the
  operators $\tilde{\varphi}_j \circ p_{j,(t',t)}^w(x,D_x) \circ
  \tilde{\varphi}_j \in \Psi^0(\tilde{\theta}_j)$ for $j \in \J_i$
  (see Figure~\ref{fig: change of variable}). By Lemma~\ref{lemma:
    change of variable chi ph chi}, we obtain
  \begin{align*}
     P_{(t',t)} u_i
     =\Psi_{k(i)}^\ast \circ \left(\mstrut{0.45cm}\right. 
     \sum_{j\in \J_i}  \hat{\varphi}_j
    \circ \hat{p}_{(t',t)}^w(x,D_x)  \circ \hat{\varphi}_j
    \!\!\left.\mstrut{0.45cm}\right) 
  \circ (\Psi_{k(i)}^{-1})^\ast u_i + (t'-t) R^{(0)}_{(t',t)} u_i,
  \end{align*}
  with $R^{(0)}_{(t',t)}$ in $\Psi^0(\M)$ uniformly in $t'$ and $t$. 
  We then have 
  \begin{align}
     \label{eqM: A P}
     A(t') \circ P_{(t',t)} u_i
     &= \sum_{j\in \J_i} \sum_{l \in \J_i^{(2)}}  
     \Psi_{k(i)}^\ast\circ \left(\mstrut{0.45cm}\right. 
     \hat{\varphi}_l \circ  
       \hat{q}^w (t',x,D_x) \circ \hat{\varphi}_l
       \hat{\varphi}_j \circ 
       \hat{p}_{(t',t)}^w(x,D_x) \circ \hat{\varphi}_j
       \left. \mstrut{0.45cm}\right) 
     \circ (\Psi_{k(i)}^{-1})^\ast u_i\\ 
     &+ (t'-t)R^{(1)}_{(t',t)}  u_i, \nonumber
   \end{align}  
   where $R^{(1)}_{(t',t)}$ is in $\Psi^2(\M)$ uniformly in $t'$ and
   $t$. 

   From~(\ref{eqM: localized propagator}) we have
   \begin{align*}
     \d_{t'} P_{j,(t',t)} u_i 
     =  \psi_j^\ast \circ  (\tilde{\varphi}_j \circ 
     (q_j(t,.) p_{j,(t',t)})^w(x,D_x) \circ \tilde{\varphi}_j) 
     \circ (\psi_j^{-1})^\ast u_i,\qquad j \in \J_i,
   \end{align*}
   which we may write
    \begin{align*}
     \d_{t'} P_{j,(t',t)} u_i 
     =  &\psi_j^\ast \circ  (\tilde{\varphi}_j \circ 
     q_j^w(t,x,D_x) \circ p_{j,(t',t)}^w(x,D_x) \circ \tilde{\varphi}_j) 
     \circ (\psi_j^{-1})^\ast u_i\\
     &+ (t'-t) \psi_j^\ast  
     (\tilde{\varphi}_j \circ  \tilde{R}_{j,(t'-t)}^{(2)}
     \circ \tilde{\varphi}_j) 
     \circ (\psi_j^{-1})^\ast u_i, \qquad  j \in \J_i,
   \end{align*}
   where $ \tilde{R}^{(2)}_{j,(t',t)}$ is in
   $\Psi^2(\tilde{\theta}_j)$ uniformly in $t'$ and $t$ by
   Lemma~\ref{lemma: q comp p - qp}. We choose $\chi_j \in
   \Cinfc(\tilde{\theta}_j)$ such that $\chi_j$ is equal to one on
   $\supp(\tilde{\varphi}_j)$. We then have 
   \begin{align*}
     \d_{t'} P_{j,(t',t)} u_i 
     =  &\psi_j^\ast \circ  (\tilde{\varphi}_j \circ 
     q_j^w(t,x,D_x) \circ \chi_j \circ p_{j,(t',t)}^w(x,D_x) \circ \chi_j 
     \circ \tilde{\varphi}_j) 
     \circ (\psi_j^{-1})^\ast u_i\\
     &+ (t'-t) \psi_j^\ast  
     (\tilde{\varphi}_j \circ  \tilde{R}_{j,(t'-t)}^{(2)}
     \circ \tilde{\varphi}_j) 
     \circ (\psi_j^{-1})^\ast u_i, \qquad  j \in \J_i,
   \end{align*}
   recalling that $q_j^w(t,x,D_x)$ is a differential operator, hence a
   local operator. Applying Lemma~\ref{lemma: change of variable
     chi ph chi} to $\chi_j \circ p_{j,(t',t)}^w(x,D_x) \circ \chi_j$
   we obtain
   \begin{align*}
     \d_{t'} P_{j,(t',t)} u_i 
     =  &\Psi_{k(i)}^\ast \circ  (\hat{\varphi}_j \circ 
     \hat{q}^w(t,x,D_x) \circ \hat{\chi}_j \circ \hat{p}_{(t',t)}^w(x,D_x) 
     \circ \hat{\chi}_j 
     \circ \hat{\varphi}_j) 
     \circ (\Psi_{k(i)}^{-1})^\ast u_i+ (t'-t) R_{j,(t'-t)}^{(3)}
      u_i, \quad  j \in \J_i,
   \end{align*}
   where $ R^{(3)}_{j,(t',t)}$ is in $\Psi^2(\M)$ uniformly in $t'$
   and $t$ and $\hat{\chi_j} = \left( \psi_j \circ \Psi_{k(i)}^{-1}\right)^\ast \chi_j$.  Using again that $\hat{q}^w(t,x,D_x)$ is a differential
   operator, we finally obtain
   \begin{align}
     \label{eqM: dt P}
     \d_{t'} P_{(t',t)} u_i 
     =  &\sum_{j\in \J_i}\Psi_{k(i)}^\ast \circ  (\hat{\varphi}_j \circ 
     \hat{q}^w(t,x,D_x) \circ \hat{p}_{(t',t)}^w(x,D_x)  
     \circ \hat{\varphi}_j) 
     \circ (\Psi_{k(i)}^{-1})^\ast u_i
     + (t'-t) R_{(t'-t)}^{(3)}
      u_i,
   \end{align}
   where  $ R^{(3)}_{(t',t)}$ is in
   $\Psi^2(\M)$ uniformly in $t'$ and $t$.

   The operators $R^{(1)}_{(t',t)}$ in (\ref{eqM: A P}) and
   $R^{(3)}_{(t',t)}$ in (\ref{eqM: dt P}) will contribute to the
   operator $L_{i,(t',t)}$ and we discard them from the subsequent
   analysis. Observe that we may change the sums over $j \in \J_i$ to
   sums over $j\in \J_i^{(2)}$  (\ref{eqM: A
     P}) and in (\ref{eqM: dt P}) since we only consider the action of the two operators on
   $u_i$.

   Now that we have brought the analysis to the open set
   $\tilde{\Theta}_{k(i)}$, we shall consider and analyze the following
   symbol, $\sigma_{(t',t)}$, which corresponds to the operator
   $(\Psi_{k(i)}^{-1})^\ast \circ (\d_t +A(t')) \circ P_{(t',t)} \circ
   \Psi_{k(i)}^\ast$ ignoring the operators $R^{(1)}_{(t',t)}$ and
   $R^{(3)}_{(t',t)}$ as explained above:
  \begin{align*}
    \sigma_{(t',t)}(x,\xi) = - \sum_{j\in \J_i^{(2)}} \hat{\varphi}_j\, \#^w
    \hat{q}(t,.)\, \#^w \hat{p}_{(t',t)}\, \#^w  \hat{\varphi}_j
    +\underbrace{\sum_{j,l\in \J_i^{(2)}}  
    \hat{\varphi}_l \, \#^w
       \hat{q} (t',.)\, \#^w \hat{\varphi}_l\, 
       \hat{\varphi}_j \, \#^w
       \hat{p}_{(t',t)}\, \#^w \hat{\varphi}_j}_{=:\sigma^{(1)}_{(t',t)}},
  \end{align*}
  keeping in mind that we only consider the action of the associated
  operator on $(\Psi_{k(i)}^{-1})^\ast u_i$ whose support is compact
  and contained in $\Psi_{k(i)}(\theta_i)$. We extend the symbol
  $\hat{q}(t,.)$ to $\R^n\times \R^n$ to obtain a symbol satisfying
  Assumption~\ref{assumption: symbol q} like its counterpart in
  Section~\ref{sec: Hs bound}.  We still denote by $\hat{q}(t,.)$ this
  extended symbol. We may then use global symbols in $\R^n$.  As in
  the proof of Proposition~\ref{prop: consistency error}, we may
  replace $\hat{q}(t,.)$ by $\hat{q}(t',.)$ by
  Assumption~\ref{assumptionM: regularity q}. For the symbol
  $\sigma_{(t',t)}(x,\xi)$, this yields an error term of the form
  $(t'-t)^\al \lambda^{(0)}_{(t',t)}(x,\xi)$, with $\lambda^{(0)}_{(t',t)}(x,\xi)$ in
  $S^2$ uniformly in $t'$ and $t$, that will contribute to the
  operator $L_{i,(t',t)}$. We thus discard this term in the subsequent
  analysis and we still denote by $\sigma_{(t',t)}(x,\xi)$ the
  modified symbol.

  We now set $\chi = 1 - \sum_{j\in \J_i^{(2)}} \hat{\varphi}_j^2$
  and we  write
  \begin{align*}
    \sigma^{(1)}_{(t',t)} &= 
    \sum_{j\in \J_i^{(2)}} \hat{\varphi}_j \, \#^w
    \hat{q} (t',.)\, \#^w
    \hat{\varphi}_j^2 \, \#^w
    \hat{p}_{(t',t)}\, \#^w \hat{\varphi}_j
    +
    \sum_{{\mbox{\scriptsize $j,l \in \J_i^{(2)}$}}
      \atop{\mbox{\scriptsize $l\neq j$}}}
    \hat{\varphi}_l \, \#^w
    \hat{q} (t',.)\, \#^w \hat{\varphi}_l\, 
    \hat{\varphi}_j \, \#^w
    \hat{p}_{(t',t)}\, \#^w \hat{\varphi}_j\\
    &= \sum_{j\in \J_i^{(2)}} \hat{\varphi}_j \, \#^w
    \hat{q} (t',.)\, \#^w
    (1-\chi) \, \#^w
    \hat{p}_{(t',t)}\, \#^w \hat{\varphi}_j
    +
    \sum_{{\mbox{\scriptsize $l,j \in \J_i^{(2)}$}}
      \atop{\mbox{\scriptsize $l\neq j$}}}
    \hat{\varphi}_l \, \#^w
    \hat{q} (t',.)\, \#^w \hat{\varphi}_l\, 
    \hat{\varphi}_j \, \#^w
    \hat{p}_{(t',t)}\, \#^w \hat{\varphi}_j\\
    &- \sum_{{\mbox{\scriptsize $l,j \in \J_i^{(2)}$}}
      \atop{\mbox{\scriptsize $l\neq j$}}}
    \hat{\varphi}_j \, \#^w
    \hat{q} (t',.)\, \#^w \hat{\varphi}_l^2\, 
    \, \#^w
    \hat{p}_{(t',t)}\, \#^w \hat{\varphi}_j,
  \end{align*}
  which yields
  \begin{align}
    \label{eqM: symbol sigma}
    \sigma_{(t',t)}(x,\xi) 
    &= - \sum_{j\in \J_i^{(2)}} \hat{\varphi}_j\, \#^w
    \hat{q}(t,.)\, \#^w \chi \# \hat{p}_{(t',t)}\, \#^w  \hat{\varphi}_j\\
    &+
    \sum_{{\mbox{\scriptsize $l,j \in \J_i^{(2)}$}}
      \atop{\mbox{\scriptsize $l\neq j$}}}
    \hat{\varphi}_l \, \#^w
    \hat{q} (t',.)\, \#^w 
    \left(\mstrut{0.40cm}\right. 
    \underbrace{\hat{\varphi}_j \hat{\varphi}_l \, \#^w
    \hat{p}_{(t',t)}\, \#^w \hat{\varphi}_j
    -  \hat{\varphi}_j^2 \, \#^w
    \hat{p}_{(t',t)}\, \#^w \hat{\varphi}_l}_{=:\sigma^{(2)}_{(t',t)}(x,\xi)}
    \left.\mstrut{0.40cm}\right)
  \end{align}
  From Weyl calculus \cite{Hoermander:V3}, since $\supp(\,
  \chi)\cap \supp((\Psi_{k(i)}^{-1})^\ast u_i) = \emptyset$, we find
  that the first term in the \rhs of (\ref{eqM: symbol sigma}) can be
  written in the form $(t'-t) \lambda^{(1)} (x,\xi)$, with
  $\lambda^{(1)}$ in $S^2$ uniformly in $t'$ and $t$, making use of
  the composition formula~(\ref{eq: composition formula Weyl}) and
  Lemma~\ref{lemma: getting h out}.
  
  Applying Proposition~\ref{prop: amplitude to Weyl symbol} (with
  $k=1$), we find
  \begin{align*}
    \sigma^{(2)}_{(t',t)}(x,\xi)
    =\sum_{m=1}^n \frac{i}{2}\left( 
      2 (\d_{x_m}\hat{\varphi}_l ) \hat{\varphi}_j^2 
      - \hat{\varphi}_l \d_{x_m}(\hat{\varphi}_j^2)
    \right)\d_{\xi_m} \hat{p}_{(t',t)}
    + (t'-t) \lambda^{(2)}(x,\xi),
  \end{align*}
  with $\lambda^{(2)}$ in $S^0$ uniformly in $t'$ and $t$ arguing as
  in the proof of Proposition~\ref{prop:composition r ph} in
  Appendix~\ref{appendix: composition} (using Lemma~\ref{lemma:
    getting h out} and Theorem 2.2.5 in \cite{Kumano-go:81}). 
  Therefore, we are now left with computing
  \begin{align*}
    \sigma^{(3)}_{(t',t)}(x,\xi)
    &= 
    \frac{i}{2} \sum_{m=1}^n  \sum_{{\mbox{\scriptsize $l,j \in \J_i^{(2)}$}}
      \atop{\mbox{\scriptsize $l\neq j$}}}
    \hat{\varphi}_l \, \#^w
    \hat{q} (t',.)\, \#^w \left[
      \left( 
        2 (\d_{x_m}\hat{\varphi}_l ) \hat{\varphi}_j^2 
        - \hat{\varphi}_l \d_{x_m}(\hat{\varphi}_j^2)
      \right)\d_{\xi_m} \hat{p}_{(t',t)}
    \right]\\
    &=\frac{i}{2} \sum_{m=1}^n  \sum_{{\mbox{\scriptsize $l,j \in \J_i^{(2)}$}}
      \atop{\mbox{\scriptsize $l\neq j$}}}
    \left( 
       (\d_{x_m}(\hat{\varphi}_l^2 )) \hat{\varphi}_j^2 
        - \hat{\varphi}_l^2 \d_{x_m}(\hat{\varphi}_j^2)
      \right)\hat{q}(t',x,\xi) \d_{\xi_m} \hat{p}_{(t',t)}
      + (t'-t) \lambda^{(3)}(x,\xi),
  \end{align*}
  with $\lambda^{(3)}$ in $S^2$ uniformly in $t'$ and $t$ by the 
  composition formula~(\ref{eq: composition formula Weyl}) and
  Lemma~\ref{lemma: getting h out}. Observing that the first term just
  obtained in fact vanishes, we finally have
  $\sigma_{(t',t)}(x,\xi)=(t'-t) \lambda(x,\xi)$ with $\lambda(x,\xi)$
  in $S^2$ uniformly in $t'$ and $t$. This concludes the proof.
\end{proof}

\subsection{Convergence and representation theorem}

 We observe that the energy estimate~(\ref{eq: energy estimate}) also
holds for the differential operator $A(t)$ on $\M$ (since the proof
relies on the G{\aa}rding inequality which holds for positive elliptic
operators on $\M$). Combined with the ($L^2$) stability result of
Corollary~\ref{cor: stability} and the consistency estimate of
Proposition~\ref{prop: consistency manifold}, the energy estimate
yields, as in Section~\ref{sec: convergence}, the following
representation theorem through the convergence of $\W_{\P,t}$ to
$U(t,0)$, the solution operator of the parabolic Cauchy problem
(\ref{eqM: cauchy pb 1})--(\ref{eqM: cauchy pb 2}): $U(t,0) =
\lim_{\Delta_\P\to 0} \W_{\P,t}$ in the following sense. 
\begin{theorem}
  \label{theorem: main theorem Manifold}
  Assume that $A(t)$ satisfies Assumption~\ref{assumptionM: regularity
  q}. Then the approximation Ansatz $\W_{\P,t}$ converges to the
  solution operator $U(t,0)$ of the Cauchy problem~{\rm (\ref{eqM: cauchy
  pb 1})--(\ref{eqM: cauchy pb 2})} in ${\mathcal L}(L^2(\M),H^{-1+r}(\M))$
  uniformly \wrt $t$ as $\Delta_\P = \max_{0\leq j \leq N-1} (t^{j+1}
  - t^{(j)}) $ goes to 0 with a convergence rate of order
  $\alpha(1-r)$:
  \begin{align*}  
    \| \W_{\P,t}  - U(t,0) \|_{(L^2,H^{-1+r})} 
    \leq C \Delta_\P^{\alpha(1-r)}, \quad  t\in [0,T], \quad 0\leq r<1.
  \end{align*}
  The operator $\W_{\P,t}$ also converges to $U(t,0)$ in
  $L^2(0,T,{\mathcal L}(L^2(\M),L^2(\M)))$ with a convergence rate of order
  $\alpha$:
  \begin{align*} 
    \left(\mstrut{0.4cm}\right.\!
    \int_0^T \| \W_{\P,t}  - U(t,0) \|_{(L^2,L^2)}^2 d t
    \!\!\left.\mstrut{0.4cm}\right)^\hf \leq C \Delta_\P^{\alpha}.
  \end{align*}
  Furthermore $\W_{\P,t}$ strongly converges to $U(t,0)$
  in ${\mathcal L}(L^2(\M),L^2(\M))$ uniformly \wrt $t \in
  [0,T]$.
\end{theorem}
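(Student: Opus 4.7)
The plan is to transpose the proof of Theorem~\ref{theorem: main theorem Rn} to the manifold setting. The three required ingredients are already in place: the $L^2$-stability of Corollary~\ref{cor: stability}, the consistency bound of Proposition~\ref{prop: consistency manifold}, and the energy estimate~(\ref{eq: energy estimate}), which transfers to $(\M, A(t))$ because $A(t)$ is uniformly positive elliptic, so the G\aa{}rding inequality applies and the proof in \cite{CP:82} carries over. The regularity required of the Ansatz, namely that $t \mapsto \W_{\P,t} u_0$ lies in $\Con^0([0,T], L^2(\M))$ and is piecewise $\Con^1$ into $H^{-2}(\M)$ for $u_0 \in L^2(\M)$, is inherited chart by chart from Lemmata~\ref{lemma: t' -> pt't lipschitz} and \ref{lemma: cont p, dt p}.

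I would then apply~(\ref{eq: energy estimate}) with $s=1$ to $f(t') := (U(t',0) - \W_{\P,t'})(u_0)$, yielding
\begin{align*}
  \|(U(t,0) - \W_{\P,t})u_0\|_{H^{-1}}^2
  + \int_0^T \|(U(t',0) - \W_{\P,t'})u_0\|_{L^2}^2\, d t'
  \leq C \int_0^T \|(\d_{t'} + A(t'))\W_{\P,t'} u_0\|_{H^{-2}}^2\, d t'.
\end{align*}
Splitting the last integral over $[t^{(j)}, t^{(j+1)}]$, factoring $\W_{\P,t'} = P_{(t',t^{(j)})}\W_{\P,t^{(j)}}$, invoking Proposition~\ref{prop: consistency manifold} to replace $(\d_{t'} + A(t'))P_{(t',t^{(j)})}$ by $(t'-t^{(j)})^\al L_{(t',t^{(j)})}$ with $L_{(t',t^{(j)})}$ uniformly bounded from $L^2(\M)$ into $H^{-2}(\M)$, and then using Corollary~\ref{cor: stability} to bound $\|\W_{\P,t^{(j)}} u_0\|_{L^2}$, the right-hand side is dominated by $C e^{2KT} \Delta_\P^{2\al} T \|u_0\|_{L^2}^2$. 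This directly yields the $r=0$ case of the first estimate of the theorem as well as the $L^2(0,T)$-in-time estimate. The intermediate cases $0 < r < 1$ follow by interpolation $H^{-1+r} = [H^{-1}, L^2]_r$ between the $H^{-1}$ rate just obtained and the uniform ${\mathcal L}(L^2, L^2)$-bound on $\W_{\P,t} - U(t,0)$ supplied by Corollary~\ref{cor: stability}.

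For strong convergence uniformly in $t$, I would proceed as in the $\R^n$ case: given $u_0 \in L^2(\M)$ and $\varepsilon > 0$, pick $u_1 \in H^1(\M)$ with $\|u_0 - u_1\|_{L^2} \leq \varepsilon$ and split
\begin{align*}
  \|(\W_{\P,t} - U(t,0))u_0\|_{L^2}
  \leq C \varepsilon + \|(\W_{\P,t} - U(t,0)) u_1\|_{L^2}.
\end{align*}
The second term would then be forced to vanish as $\Delta_\P \to 0$ by interpolating the $H^{-1+r}$-rate above against a uniform $H^1$-bound on $(\W_{\P,t} - U(t,0))u_1$. The $U(t,0) u_1$ contribution to that bound is standard parabolic regularity, but the $\W_{\P,t} u_1$ contribution requires an $H^1$-stability analog of Theorem~\ref{theoremM: sharp estimate}. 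Establishing this is the main obstacle: the proof of Theorem~\ref{theoremM: sharp estimate} hinges on Proposition~\ref{prop: sharp estimate density} together with the identity $\sum_i \varphi_i^2 = 1$ in a Cauchy-Schwarz manipulation intrinsic to $L^2$, and the natural $H^1$-route is to conjugate by $(I + A(0))^{1/2}$ and show that the resulting commutator contributes only an $O(t'-t)$ perturbation, which forces one to control partition-of-unity cross-terms in the spirit of the proof of Proposition~\ref{prop: consistency manifold}. Once this $H^1$-stability is established, the remainder of the argument is a verbatim adaptation of the proof of Theorem~\ref{theorem: main theorem Rn}.
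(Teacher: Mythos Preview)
Your proof is essentially the paper's own: the paper derives the first two estimates exactly as you do (energy estimate~(\ref{eq: energy estimate}) together with Corollary~\ref{cor: stability} and Proposition~\ref{prop: consistency manifold}, then interpolation \cite{LM:68} for $0<r<1$), and for the strong convergence it simply refers back to the argument of Theorem~\ref{theorem: main theorem Rn}. You have in fact been more scrupulous than the paper on this last point: the $\R^n$ argument applies the first estimate to data in $H^{s+1}$, which for $s=0$ on $\M$ amounts to an $H^1$-analogue of Corollary~\ref{cor: stability} that the paper does not state or prove separately. Your proposed route via conjugation by $(I+A(0))^{1/2}$ and commutator control is a reasonable way to supply it.
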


\appendix
\section{Proofs of composition-like formulae}
\label{appendix: composition}

We prove Proposition~\ref{prop: Weyl composition} and derive
composition results for the symbol $p_h (x,\xi) = e^{-h q(x,\xi)}$.
\subsection{Proof of Proposition~\ref{prop: Weyl composition}}
From Weyl Calculus we have
\begin{align*}
    (a\ &\#^w b) (x,\xi) =
    \pi ^{-2n} 
    \mint{4} e^{i\Sigma(z,\zeta,t,\tau,\xi)} 
    a(x+ z,\zeta) b(x+ t,\tau) \ d z\ d \zeta\ d t\ d \tau,
  \end{align*}
where $\Sigma(z,\zeta,t,\tau,\xi) = 2
(\sympl{t}{\tau-\xi}{z}{\zeta-\xi})$ (see \cite{Hoermander:V3}, p.~152).
This  yields
\begin{align*}
    (a\ &\#^w b) (x,\xi) - (a b) (x,\xi)
    =
    \pi ^{-2n} 
    \mint{4} e^{i\Sigma(z,\zeta,t,\tau,\xi)} 
    \left(\mstrut{0.3cm} 
    a(x+ z,\zeta) b(x+ t,\tau) - a(x,\zeta) b(x,\tau)\right) 
    \ d z\ d \zeta\ d t\ d \tau\\
    &=\sum_{j=1}^n \pi ^{-2n} \int\limits_0^1
    \mint{4} e^{i\Sigma(z,\zeta,t,\tau,\xi)} 
    \left(\mstrut{0.3cm} 
    z_j \d_{x_j} a(x+ r z,\zeta) b(x+ r t,\tau) 
    + t_j a(x+ r z,\zeta)  \d_{x_j} b(x+ r t,\tau) \right) 
    \ d r \ d z\ d \zeta\ d t\ d \tau
  \end{align*}
by a first-order Taylor formula.  In the first (\resp second) term
  that we have obtained, we write
  \begin{align*}
    &z_j  e^{i\Sigma(z,\zeta,t,\tau,\xi)}
    =- \frac{i}{2}   \d_{\tau_j} e^{i\Sigma(z,\zeta,t,\tau,\xi)}
    \ \quad
    (\text{\resp}\ \  
    t_j  e^{i\Sigma(z,\zeta,t,\tau,\xi)}
    = \frac{i}{2} \d_{\zeta_j} e^{i\Sigma(z,\zeta,t,\tau,\xi)}).
  \end{align*}
Integration by parts \wrt $\tau$ and $\zeta$ in the oscillatory
integral yields
\begin{align*}
    (a\ &\#^w b) (x,\xi) - (a b) (x,\xi)
    =
    \sum_{j=1}^n \frac{\pi ^{-2n}}{2i}
    \int\limits_0^1
    \mint{4} e^{i\Sigma(z,\zeta,t,\tau,\xi)} 
    \left(\mstrut{0.3cm} 
     \d_{\xi_j} a(x+ r z,\zeta)  \d_{x_j} b(x+ r t,\tau) \right.\\
     &\hspace*{6.5cm}\left. \mstrut{0.3cm} 
     - \d_{x_j} a(x+ r z,\zeta) \d_{\xi_j} b(x+ r t,\tau) \right) 
    \ d r \ d z\ d \zeta\ d t\ d \tau,\\
    &= \frac{i\pi ^{-2n}}{2}
    \int\limits_0^1
    \mint{4} e^{i\Sigma(z,\zeta,t,\tau,\xi)} 
    \symp{D_x}{D_\zeta}{D_y}{D_\tau}
    \left( \mstrut{0.3cm} 
     a(x+ r z,\zeta) b(y+ r t,\tau) \right) 
    \ d r \ d z\ d \zeta\ d t\ d \tau\left.\mstrut{0.4cm}\right|_{y=x},
  \end{align*}
which gives the result of Proposition~\ref{prop: Weyl composition} for
$k=0$.To proceed further we integrate by parts \wrt $r$ and
obtain
\begin{align*}
    (a\ &\#^w b) (x,\xi) - (a b) (x,\xi)
    = 
    \frac{i\pi ^{-2n}}{2} \mint{4} e^{i\Sigma(z,\zeta,t,\tau,\xi)} 
    \symp{D_x}{D_\zeta}{D_y}{D_\tau}
    \left( \mstrut{0.3cm} 
     a(x,\zeta) b(y,\tau) \right) \ d z\ d \zeta\ d t\ d \tau
     \left.\mstrut{0.4cm}\right|_{y=x}
     \\
    &+\sum_{j=1}^n  \frac{i\pi ^{-2n}}{2}
    \int\limits_0^1 (1-r) 
    \mint{4} e^{i\Sigma(z,\zeta,t,\tau,\xi)} 
    \symp{D_x}{D_\zeta}{D_y}{D_\tau}
    \left( \mstrut{0.3cm}
    z_j \d_{x_j} a(x+ r z,\zeta) b(y+ r t,\tau) \right.\\
    &\hspace*{7.5cm}
    \left.+ t_j a(x+ r z,\zeta)  \d_{x_j} b(y+ r t,\tau)
     \mstrut{0.3cm} \right) 
    \ d r \ d z\ d \zeta\ d t\ d \tau
    \left.\mstrut{0.4cm}\right|_{y=x}\\
    &= \frac{i}{2}  
    \symp{D_x}{D_\xi}{D_y}{D_\eta}
    \left.\left( \mstrut{0.3cm} 
     a(x,\xi) b(y,\eta) \right)\right|_{{y=x} \atop {\eta=\xi}}\\
     &+ \pi ^{-2n} \left(\frac{i}{2}\right)^2
     \int\limits_0^1 (1-r) 
    \mint{4} e^{i\Sigma(z,\zeta,t,\tau,\xi)} 
    \left(\symp{D_x}{D_\zeta}{D_y}{D_\tau}\right)^2 
    \left( \mstrut{0.3cm} 
     a(x+ r z,\zeta) b(y+ r t,\tau) \right) 
    \ d r \ d z\ d \zeta\ d t\ d \tau
    \left.\mstrut{0.4cm}\right|_{y=x},
  \end{align*}
which gives the result for $k=1$. Formula~\ref{eq:
composition formula Weyl} then follows from induction
by integration par parts \wrt $r$ each time. \hfill \qedsymbol

\subsection{From amplitudes to symbols}
Here we give a formula of the form of (\ref{eq: composition formula
  Weyl}) to compute the Weyl symbol of a $\psi$DO
starting from an arbitrary amplitude.
\begin{proposition}
  \label{prop: amplitude to Weyl symbol}
  Let $a(x,y,\xi) \in S^m(\R^n\times\R^n\times \R^n)$ be the amplitude
  of a $\psi$DO $A$, \ie, 
  \begin{equation*}
    A u(x) = (2\pi)^{-n}\iint e^{i\inp{x-y}{\xi}} a(x,y,\xi)\: u (y)\: d y \: d \xi.
  \end{equation*}
  The Weyl symbol $b$ of $A$, \ie $A= b^w(x,D_x)$,
  is then given by
  \begin{align*}
  b(x,\xi) &= \left.e^{\frac{i}{2} (\inp{D_y}{D_\xi}- \inp{D_x}{D_\xi})}
    a(x,y,\xi)\right|_{y=x}
    = \pi^{-n} \iint e^{2i \inp{z}{\zeta-\xi}}  a(x+z,x-z,\zeta)
    \, d z\ d \zeta\\
    &= \sum_{j=0}^k  \frac{1}{j!}  \left(\frac{i}{2} \inp{\d_x-\d_y}{\d_\xi} \right)^j \
    \left. a(x,y,\xi)\mstrut{0.4cm}\right|_{y=x}\nonumber\\
    &\hspace*{.1cm}+\pi^{-n}
    \int\limits_0^1\frac{(1-r)^k}{k!}\iint e^{2i \inp{z}{\zeta-\xi}}
    \left(\frac{i}{2} \inp{\d_x-\d_y}{\d_\xi} \right)^{k+1}
    a(x+r z,y-r z,\zeta)\ d r \ d z\ d \zeta \!\left.\mstrut{0.4cm}\right|_{y=x}.
  \nonumber
\end{align*}
\end{proposition}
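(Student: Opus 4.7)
The plan is to derive the formula in two stages: first establish the closed-form oscillatory integral representation of $b$ in terms of $a$ by comparing Schwartz kernels, then extract the expansion with explicit remainder by a Taylor-and-integrate-by-parts argument, exactly in the spirit of the proof of Proposition~\ref{prop: Weyl composition}.

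For Step 1, I would write the kernel of $A$ in two ways. Directly from the amplitude,
\begin{equation*}
  K_A(x,y) = (2\pi)^{-n}\int e^{i\inp{x-y}{\xi}} a(x,y,\xi)\, d\xi,
\end{equation*}
while demanding $A=b^w(x,D_x)$ gives
\begin{equation*}
  K_A(x,y) = (2\pi)^{-n}\int e^{i\inp{x-y}{\xi}} b\bigl((x+y)/2,\xi\bigr)\, d\xi.
\end{equation*}
Setting $w=(x+y)/2$, $s=x-y$, Fourier inversion in $s$ yields $b(w,\xi) = \int e^{-i\inp{s}{\xi}} K_A(w+s/2,w-s/2)\, ds$; substituting the first expression and changing variables $s=2z$ produces the announced representation
\begin{equation*}
  b(x,\xi) = \pi^{-n}\iint e^{2i\inp{z}{\zeta-\xi}}\, a(x+z,x-z,\zeta)\, dz\, d\zeta,
\end{equation*}
to be read as an oscillatory integral.

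For Step 2, I would apply Taylor's formula with integral remainder to $F(r)=a(x+rz,x-rz,\zeta)$ at $r=1$ around $r=0$. The chain rule gives $F^{(j)}(r)=\bigl[\inp{z}{\partial_x-\partial_y}^j a(x+rz,y-rz,\zeta)\bigr]_{y\to x}$, hence
\begin{equation*}
  a(x+z,x-z,\zeta) = \sum_{j=0}^k \frac{1}{j!}\bigl[\inp{z}{\partial_x-\partial_y}^j a(x,y,\zeta)\bigr]_{y=x} + \int_0^1 \frac{(1-r)^k}{k!}\bigl[\inp{z}{\partial_x-\partial_y}^{k+1} a(x+rz,y-rz,\zeta)\bigr]_{y=x} dr.
\end{equation*}
Plug into the integral of Step 1. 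Each term of the sum is a polynomial in $z$ with coefficients depending on $(x,\zeta)$; using the multinomial identity $\inp{z}{v}^j = \sum_{|\alpha|=j}\frac{j!}{\alpha!}z^\alpha v^\alpha$, I would then apply the identity $z_m e^{2i\inp{z}{\zeta-\xi}} = \frac{1}{2i}\partial_{\zeta_m}e^{2i\inp{z}{\zeta-\xi}}$ and integrate by parts in $\zeta$. Since $\pi^{-n}\iint e^{2i\inp{z}{\zeta-\xi}} h(\zeta)\, dz\, d\zeta = h(\xi)$ in the oscillatory sense, each $z^\alpha$ factor contributes $\bigl(\frac{i}{2}\bigr)^{|\alpha|}\partial_\xi^\alpha$. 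Re-summing via the same multinomial identity gives exactly the claimed differential operator $\frac{1}{j!}\bigl(\frac{i}{2}\inp{\partial_x-\partial_y}{\partial_\xi}\bigr)^j$ acting on $a(x,y,\xi)$ and then evaluated at $y=x$.

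The remainder term is then shown to match the stated one by retaining the variable $z$ inside $a(x+rz,y-rz,\zeta)$ rather than setting $y=x$ prematurely, which is exactly the step performed in the proof of Proposition~\ref{prop: Weyl composition}. The main obstacle is the same as in that proposition: making the integration-by-parts argument rigorous as an oscillatory integral identity, in particular justifying the manipulations on the remainder (where derivatives hit $a(x+rz,y-rz,\zeta)$, so one must combine integration by parts in $\zeta$ to absorb polynomial growth in $\zeta$ with integration by parts in $z$ to recover decay in $\zeta-\xi$). Once this is carried out following the exact same template as in Appendix~\ref{appendix: composition}, the formula follows and the compact exponential form $\bigl.e^{\frac{i}{2}(\inp{D_y}{D_\xi}-\inp{D_x}{D_\xi})} a(x,y,\xi)\bigr|_{y=x}$ is just the formal resummation of the asymptotic series obtained.
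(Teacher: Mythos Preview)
Your proposal is correct and follows essentially the same approach as the paper, which simply states that the proof is analogous to that of Proposition~\ref{prop: Weyl composition}. The only organizational difference is that you apply Taylor's formula with integral remainder to order $k$ in one stroke and then integrate by parts in $\zeta$, whereas the paper's template for Proposition~\ref{prop: Weyl composition} proceeds iteratively (first-order Taylor, integrate by parts in the frequency variables, then integrate by parts in $r$ to peel off the next term, and induct); the two schemes are equivalent and yield the same remainder.
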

The proof is analogous to that of
Proposition~\ref{prop: Weyl composition} given above.

\subsection{Proof of Proposition~\ref{prop:composition r ph}}

We prove the results for $r_h \ \#^w p_h$. The results for $p_h\
  \#^w r_h$ follow similarly. We first use Proposition~\ref{prop: Weyl
  composition} for $k=0$:
  \begin{align}
    \label{eq: comp 2}
    r_h\ &\#^w p_h (x,\xi) =  
    r_h(x,\xi) p_h(x,\xi) 
    + \frac{\pi ^{-2n}}{2i}\sum_{j=1}^n \int\limits_0^1
    \mint{4} e^{i\Sigma(z,\zeta,t,\tau,\xi)}  
    \left( \mstrut{0.3cm}\right.
    \d_{\xi_j} r_h(x+r z,\zeta)\, \d_{x_j}p_h(x+r t,\tau)\\
    &\hspace*{6.5cm}- \d_{x_j} r_h(x+r z,\zeta)\, \d_{\xi_j}p_h(x+r t,\tau)
    \left. \mstrut{0.3cm} \right)
    \ d r \ d z\ d \zeta\ d t\ d \tau.\nonumber
  \end{align}
By Lemma~\ref{lemma: getting h out}, we have
\begin{align*}
  \d_{x_j} p_h = h \nu_{h (j)}, \quad 
  \d_{\xi_j} p_h = h \nu_{h}^{(j)},
\end{align*}
with $ \nu_{h (j)}$ in $S^2$ and $\nu_{h}^{(j)}$ in $S^1$
uniformly in $h$. We thus observe that the last term in (\ref{eq: comp
2}) can be written as a linear combination of
  terms of the form
\begin{align}
  \label{eq: comp 3}
 h \int\limits_0^1 \mint{4} e^{i\Sigma(z,\zeta,t,\tau,\xi)}  
    \nu_{1,h}(x+r z,\zeta) \nu_{2,h}(x+r t,\tau)
    \ d r \ d z\ d \zeta\ d t\ d \tau,
\end{align}
where $\nu_{1,h}$ and $\nu_{2,h}$ are respectively in $S^{m_1}$ and
$S^{m_2}$ uniformly in $h$ with $m_1 + m_2 =l+1$.
Setting
\begin{align*}
  \nu_h (x,\tilde{x}, y, \tilde{y}, \xi,\eta) = 
  \int\limits_0^1 \nu_{1,h}(r x+(1-r) \tilde{x},\xi)\, 
  \nu_{2,h}(r y+(1-r) \tilde{y},\eta)\ d r,
\end{align*}
we see that it is a multiple symbol in $S^{m_1,m_2}(\R^{2n}\times
\R^n\times \R^{2n}\times \R^n)$ and the term in (\ref{eq: comp
3}) can be written as
\begin{align*}
 h \mint{4} e^{i\Sigma(z,\zeta,t,\tau,\xi)}  
    \nu_h (x+z,\tilde{x}, y+t, \tilde{y}, \zeta,\tau)
    \ d z\ d \zeta\ d t\ d \tau
    \left.\mstrut{0.4cm}\right|_{\tilde{x}=\tilde{y}=y=x},
\end{align*}
 Applying Theorem 2.2.5 in
  \cite{Kumano-go:81} twice (once for the integrations \wrt $z$ and
  $\tau$, a second time for the integrations \wrt $t$ and $\zeta$,
  recalling that $\Sigma(z,\zeta,t,\tau,\xi) = 2
(\sympl{t}{\tau-\xi}{z}{\zeta-\xi})$) we obtain
  that the last term in (\ref{eq: comp 2}) is of the form
  $h \lambda_h^{(1)} (x,\xi)$, where $\lambda_h^{(1)} (x,\xi)$ is in
  $S^{l+1}$ uniformly in $h$. 

Similarly, by Lemma~\ref{lemma: getting h
  out},  we write
  \begin{align*}
  \d_{x_j} p_h = h^{\hf} \tilde{\nu}_{h (j)}, \quad 
  \d_{\xi_j} p_h = h^{\hf} \tilde{\nu}_{h}^{(j)},
\end{align*}
with $\tilde{\nu}_{h (j)}$ in $S^1$ and
$\tilde{\nu}_{h}^{(j)}$ in $S^0$ uniformly in $h$. The same
reasoning as above yields the last term in (\ref{eq: comp 2}) is of the form
  $h^\hf \lambda_h^{(0)} (x,\xi)$, where $\lambda_h^{(0)}  (x,\xi)$ is in
  $S^{l}$ uniformly in $h$.

To now treat the last equality in (\ref{eq: r p}) we use
  Proposition~\ref{prop: Weyl composition} for $k=1$:
  \begin{align}
    \label{eq: composition ordre 2}
    r_h\ &\#^w p_h (x,\xi) = (r_h p_h)(x,\xi)
    +\frac{1}{2i} \{r_h,p_h\}(x,\xi)\\
    &+ \frac{\pi ^{-2n}}{(2i)^2}\int\limits_0^1 (1-r) 
    \!\!\!\sum_{1\leq j,k\leq n} \mint{4} e^{i\Sigma(z,\zeta,t,\tau,\xi)} 
    \left( \mstrut{0.3cm} \right.\!\!
    \d^2_{\xi_j\xi_k} r_h(x+r z,\zeta) 
    \d^2_{x_j x_k} p_h(x+r t,\tau)
    \nonumber \\& \hspace*{1cm}
    -2 \d^2_{x_j\xi_k} r_h(x+r z,\zeta) \d^2_{\xi_j x_k} p_h(x+r
    t,\tau)
    +\d^2_{x_j x_k} r_h(x+r z,\zeta) \d^2_{\xi_j\xi_k} p_h(x+r t,\tau)
    \!\left. \mstrut{0.3cm} \right)
    \ d r \ d z\ d \zeta\ d t\ d \tau.
    \nonumber
  \end{align}
  Here, by Lemma~\ref{lemma: getting h
  out},  we  write
  \begin{align*}
  \d^2_{x_j x_k} p_h = h \nu_{h (j,k)}, \quad 
  \d^2_{\xi_j x_k } p_h = h \nu_{h (k)}^{(j)},\quad 
  \d^2_{\xi_j \xi_k } p_h = h \nu_{h}^{(j,k)},
\end{align*}
  where $\nu_{h (j,k)}$, $\nu_{h (k)}^{(j)}$, and $\nu_{h}^{(j,k)}$\
  are respectively in $S^2$, $S^1$, and $S^0$ uniformly in $h$. We can then
  conclude as above with Theorem 2.2.5 in \cite{Kumano-go:81} and find
  the last term in (\ref{eq: composition ordre 2})
  of the form $h \tilde{\lambda}_h^{(0)}$ with $\tilde{\lambda}^{(0)}_h$
  in $S^{l}$ uniformly in $h$. 
  \hfill \qedsymbol
 
\section{Effect of a change of variables}
\label{sec: appendix change variables}
\subsection{Pseudodifferential calculus results}

We shall be interested in transformation formulae for Weyl symbols
under a change of variables and apply them to the particular symbols
we consider in Section~\ref{sec: manifold}.  We let $X$ and
$\tilde{X}$ be two open subsets of $\R^n$ and let $\kappa: X \to
\tilde{X}$ be a diffeomorphism.  We shall study the effect of the
change of variables $x \mapsto \kappa(x)$ on the symbol $\chi\, \#^w
p_h\, \#^w \chi$ in the Weyl quantization, where $p_h = e^{-h q}$,
with $q$ satisfying the assumptions made in Section~\ref{sec:
  manifold} above and $\chi \in \Cinfc(X)$.

We first consider general amplitudes before specializing to Weyl
symbols.  Let $a(x,y,\xi)$ be the amplitude in $S^m(X\times X
\times\R^n)$ of $A \in \Psi^m(X)$ whose kernel is compactly supported.
In particular, below,  we shall consider $a(x,y,\xi)$ to be of the form
$\chi(x)\, \chi(y)\, \tilde{a}(x,y,\xi)$, with $\tilde{a} \in S^m(X\times X
\times\R^n)$.  With $\zeta \in \Cinfc(\R^n)$ equal to 1 in a \nhd
of $0$ we set
\begin{align*}
  a_0 (x,y,\xi) = \zeta(x-y)\, a(x,y,\xi), \quad \text{and}\quad
  a_\infty(x,y,\xi) = (1-\zeta(x-y))\, a(x,y,\xi).
\end{align*}
If we set $A_\kappa = (\kappa^{-1})^\ast \circ A \circ \kappa^\ast$,
then $A_\kappa \in \Psi^m(\tilde{X})$. In fact, for $\supp(\zeta)$
sufficiently small, $ A_\kappa = A_{0,\kappa} + A_{\infty,\kappa}$,
with $A_{\infty,\kappa} \in \Psi^{-\infty}(\tilde{X})$, and an
amplitude of $A_{0,\kappa}$ is given by \cite{GS:94}
\begin{align}
  \label{eq: change of variable amplitude}
  a_{0,\kappa}(x,y,\xi) = a_0(\kappa^{-1}(x), \kappa^{-1}(y), 
  \transp(\wt{\kappa^{-1}}(x,y))^{-1} \xi) \,
  |\det (\kappa^{-1})'(y)|\, |\det \wt{\kappa^{-1}}(x,y)|^{-1},
\end{align}
where $\wt{\kappa^{-1}}(x,y) = (\wt{\kappa^{-1}_{kl}}(x,y))_{1\leq
  k,l\leq n}$ is defined through
\begin{align*}
  \kappa^{-1}_k(x) - \kappa^{-1}_k(y) 
  = \sum_{l=1}^n \wt{\kappa^{-1}_{kl}}(x,y) (x_l - y_l).
\end{align*}
Note that $\wt{\kappa^{-1}}(x,x) = (\kappa^{-1})'(x)$ which implies
that $\wt{\kappa^{-1}}(x,y)$ is indeed invertible in the support of
$a_0$ when $\supp(\zeta)$ is sufficiently small. Note also that
\begin{align}
  \label{eq: diff kappa tilde}
  \d_{x_j} \wt{\kappa^{-1}}(x,y)|_{y=x} = \d_{y_j} \wt{\kappa^{-1}}(x,y)|_{y=x}
  = \hf \d_{x_j}(\kappa^{-1})'(x), \quad j=1,\dots,n.
\end{align}
Note that, for the operator $A_\infty$, we can regularize its kernel
by integration by parts and use the amplitude 
\begin{align}
  \label{eq: regularized a infty}
  a_{\infty}^{(k)} (x,y,\xi) = L^k a_\infty (x,y,\xi), \quad \text{with}\ 
  L = \frac{i}{|x-y|^2} \sum_{i=1}^n (x_i-y_i) \d_{\xi_i}, \quad k\in\N,
\end{align}
in place of $a_\infty(x,y,\xi)$.

By Proposition~\ref{prop: amplitude to Weyl symbol} (with $k=1$), the Weyl symbol of $A_{0,\kappa}$ is given by
\begin{align}
  \label{eq: amplitude to Weyl}
  \al_\kappa(x,\xi) &= \left.e^{\frac{i}{2} (\inp{D_y}{D_\xi}- \inp{D_x}{D_\xi})}
    a_{0,\kappa}(x,y,\xi)\right|_{y=x}
    = \pi^{-n} \iint e^{2i \inp{z}{\zeta-\xi}}  a_{0,\kappa}(x+z,x-z,\zeta)
    \, d z\ d \zeta\\
    &= \underbrace{\phantom{\frac{1}{2}}\!\!\!\!a_{0,\kappa}(x,x,\xi)}_{=:\al_{\kappa,0}(x,\xi)} 
    + \underbrace{\frac{i}{2} \left.\inp{\d_{x} - \d_{y}}{\d_{\xi}}\, 
    a_{0,\kappa}(x,y,\xi)\right|_{y=x}}_{=:\al_{\kappa,1}(x,\xi)} \nonumber \\
  &\quad + \pi^{-n} \left(\frac{i}{2}\right)^2 
  \!\int\limits_0^1\!\! \iint e^{2i \inp{z}{\zeta-\xi}} (1-r) 
  (\inp{\d_{x} - \d_{y}}{\d_{\zeta}}^2 
  a_{0,\kappa})(x+r z,y-r z,\zeta)\, d r \ d z\ d \zeta
  \!\left.\mstrut{0.35cm}\right|_{y=x}.
  \nonumber
\end{align}

 We now specialize to an amplitude $\tilde{a}(x,y,\xi)$
given by the Weyl quantization, i.e.,
\begin{equation*}
  a(x,y,\xi) = \chi(x)\, \chi(y)\, b((x+y)/2,\xi).
\end{equation*}
To simplify some notation we set $L= \kappa^{-1}$.
The symbol $\al_{\kappa,0}(x,\xi)$ is then given by
\begin{align}
  \label{eq: al 0 kappa}
  \al_{\kappa,0}(x,\xi) = \chi(L(x))^2\, b(L(x), 
  \transp \kappa'(L(x)) \xi).
\end{align}
\begin{lemma}
\label{lemma: al 1 kappa}
The symbol $\al_{\kappa,1}(x,\xi)$ is given by
\begin{align*}
  \al_{\kappa,1}(x,\xi) = \frac{i}{2}\, \chi(L(x))^2
  \sum_{k=1}^n f_k(x) \,
  (\d_{\xi_k} b)(L(x),\transp \kappa'(L(x)) \xi), 
\end{align*}
where
\begin{align*}
  f_k(x) =  \sum_{l=1}^n \d_{x_l}( \kappa'_{l k}(L(x))
  =
  \!\!\sum_{1\leq m,l\leq n} (\d^2_{x_k,x_l} \kappa_m)(L(x))\: 
  (\d_{x_m} L_l)(x).
\end{align*}
\end{lemma}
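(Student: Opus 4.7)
The plan is to substitute the amplitude $a(x,y,\xi) = \chi(x)\chi(y)b((x+y)/2,\xi)$ into (\ref{eq: change of variable amplitude}) and evaluate the first-order Taylor coefficient
\[
\alpha_{\kappa,1}(x,\xi) = \frac{i}{2}\sum_{k=1}^n (\d_{x_k}-\d_{y_k})\d_{\xi_k} a_{0,\kappa}(x,y,\xi)\big|_{y=x}
\]
from (\ref{eq: amplitude to Weyl}) directly. Since $\zeta\equiv 1$ in a neighbourhood of $0$, all derivatives of $\zeta(x-y)$ vanish at $y=x$; hence $\zeta$ plays no role in either $\alpha_{\kappa,0}$ or $\alpha_{\kappa,1}$ and I drop it. Writing $L=\kappa^{-1}$ and $M=\wt{\kappa^{-1}}$, I then have to differentiate
\[
a_{0,\kappa}(x,y,\xi) = \chi(L(x))\chi(L(y))\,b\!\left(\tfrac{L(x)+L(y)}{2},\,\transp M(x,y)^{-1}\xi\right)|\det L'(y)|\,|\det M(x,y)|^{-1}.
\]

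The main observation is that when $(\d_{x_k}-\d_{y_k})\d_{\xi_k}$ is distributed by Leibniz and evaluated at $y=x$, almost every piece cancels. The derivative $\d_{\xi_k}$ only touches the second argument of $b$ and produces a factor $\sum_l (M(x,y)^{-1})_{kl}(\d_{\eta_l}b)$. Then $(\d_{x_k}-\d_{y_k})$ applied to the prefactor $\chi(L(x))\chi(L(y))$ and to the first argument $(L(x)+L(y))/2$ of $b$ vanishes by symmetry in $(x,y)$; the contributions obtained when the derivative lands on the leading $M^{-1}(x,y)$, on the $\xi$-argument $\transp M(x,y)^{-1}\xi$ of $b$, or on the $|\det M(x,y)|^{-1}$ factor of the Jacobian all vanish at $y=x$ by (\ref{eq: diff kappa tilde}). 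The unique surviving contribution comes from $\d_{y_k}|\det L'(y)|\big|_{y=x}$ and yields
\[
(\d_{x_k}-\d_{y_k})\bigl(|\det L'(y)|\,|\det M|^{-1}\bigr)\big|_{y=x} = -|\det L'(x)|^{-1}\d_{x_k}|\det L'(x)|.
\]
Collecting terms produces
\[
\alpha_{\kappa,1}(x,\xi) = \frac{i}{2}\chi(L(x))^2\sum_{l=1}^n \widetilde{f}_l(x)\,(\d_{\xi_l}b)(L(x),\transp\kappa'(L(x))\xi),
\]
where $\widetilde{f}_l(x) = -|\det L'(x)|^{-1}\sum_k \kappa'(L(x))_{kl}\,\d_{x_k}|\det L'(x)|$.

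The remaining step, which I expect to be the most delicate, is to identify $\widetilde{f}_l$ with the $f_l$ given in the lemma. Jacobi's formula rewrites $|\det L'|^{-1}\d_{x_k}|\det L'| = \sum_{p,q}\kappa'(L)_{pq}\,\d^2_{x_kx_p}L_q$, so $\widetilde{f}_l$ becomes a quadratic expression in $\kappa'(L)$ contracted against second derivatives of $L$. On the other hand, differentiating the identity $\kappa_i(L(x))=x_i$ twice produces
\[
\sum_q \kappa'_{iq}(L)\,\d^2_{x_kx_p}L_q = -\sum_{a,b}(\d^2_{z_az_b}\kappa_i)(L)\,L'_{ak}L'_{bp}.
\]
Contracting with $\kappa'(L)_{kl}$ and summing over $k$ eliminates one $L'$ via $L'\kappa'(L)=I$; then setting $i=p$ and summing converts $\widetilde{f}_l$ into $\sum_{m,b}(\d^2_{z_lz_b}\kappa_m)(L)\,\d_{x_m}L_b$, which after straightforward relabelling matches both expressions for $f_l$ stated in the lemma (the equivalence of the two forms follows from symmetry of mixed partials). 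The index bookkeeping in this contraction is the only subtle part; the rest of the argument is routine.
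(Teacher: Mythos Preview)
Your proposal is correct and follows essentially the same route as the paper: both drop $\zeta$, use (\ref{eq: diff kappa tilde}) to kill all contributions where $(\d_{x_k}-\d_{y_k})$ hits $\wt{\kappa^{-1}}$, observe the symmetric cancellations on $\chi$ and on the first argument of $b$, and are left with the single term from $\d_{y_k}|\det L'(y)|$, followed by Jacobi's formula and differentiation of the identity $\kappa\circ L=\mathrm{id}$. The only cosmetic difference is in the last algebraic simplification: the paper exploits the symmetry $\d_{x_j}L'_{pl}=\d_{x_l}L'_{pj}$ and then the derivative of $\kappa'(L)L'=I$, whereas you differentiate $\kappa_i(L(x))=x_i$ twice; these are equivalent rearrangements of the same chain-rule identity.
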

\begin{proof}
  From the definition of $\al_{\kappa,1}$ in (\ref{eq: amplitude to Weyl}), 
  and (\ref{eq: change of variable amplitude}) we have
  \begin{multline*}
    \al_{\kappa,1}(x,\xi) = \frac{i}{2} \inp{\d_{x} - \d_{y}}{\d_{\xi}}\, 
    \left( \chi(L(x)) \, \chi(L(y))\, b((L(x)+L(y))/2, 
      \transp(\wt{\kappa^{-1}}(x,y))^{-1} \xi) \right. \\
    \times\left.\left.|\det (L)'(y)|\, |\det \wt{\kappa^{-1}}(x,y)|^{-1}
      \right)\right|_{y=x},
  \end{multline*}
  where we have used that $\zeta$ is equal to one in a \nhd of the
  origin. From (\ref{eq: diff kappa tilde}), we see that we need not
  take into account the spatial differentiations acting on the terms
  $\transp(\wt{\kappa^{-1}}(x,y))^{-1}$. Similarly the spatial
  differentiations acting on the cut-off functions $\chi(L(x))$ and
  $\chi(L(y))$ cancel each other, and so do the spatial
  differentiations acting on the first variable of the symbol
  $b$. Note also that the absolute values for the last two terms can
  be removed before differentiation since their product yields $1$ in
  the case $y=x$.  To simplify the notation we set $M =
  \wt{\kappa^{-1}}(x,x)$. We thus obtain
  \begin{align*}
    \al_{\kappa,1}(x,\xi) =  - \frac{i}{2} \sum_{1\leq j,k\leq n} \,
    \chi(L(x))^2\,  (\d_{\xi_k} b)(L(x), \transp \kappa'(L(x)) \xi) 
    \, (M^{-1})_{j k}\,  
    (\d_{x_j} \det (L'(x)))\, (\det M)^{-1}.
  \end{align*}
  From the multi-linearity of the determinant we find that 
  \begin{align*}
    (\d_{x_j} \det (L'(x)))\,  (\det M)^{-1}
    =\sum_{1\leq p,l\leq n} \d_{x_j} L'_{pl}(x)\, \kappa'_{l p}(L(x)),
  \end{align*}
  which yields
   \begin{align*}
     f_k(x) &= - \sum_{1\leq j,p,l\leq n} \kappa'_{j k}(L(x)) \,
     \left(\d_{x_j} L'_{pl}(x)\right)\, \kappa'_{l p}(L(x)) 
     = -\sum_{1\leq j,p,l\leq n}  \kappa'_{l p}(L(x))\,
     \left(\d_{x_l} L'_{p j}(x)\right)\, \kappa'_{j k}(L(x)) \\
     &= \sum_{l=1}^n  \d_{x_l} (\kappa'_{l k}(L(x))),
   \end{align*}
   since $\kappa'(L(x))\, L'(x) = \id_{\tilde{X}}$.
\end{proof}

\subsection{Application to the operator $\bld{\chi \circ p_h^w(x,D_x)\circ \chi}$}
We use the notation introduced above.  In the case $b = e^{-h q} =
p_h$ then $a(x,y,\xi) = \chi(x)\, \chi(y)\, e^{-h q((x+y)/2,\xi)}$ is
an amplitude for the operator $A=\chi \circ p_h^w(x,D_x)\circ \chi$
with Weyl symbol $\al = \chi\, \#^w p_h\, \#^w \chi$. Making use of
the form of the amplitude $a_\infty^{(k)}$ in (\ref{eq: regularized a
  infty}), we see that $A_{\infty,\kappa} = h
\tilde{A}_{\infty,\kappa}$ with $ \tilde{A}_{\infty,\kappa}$ in
$\Psi^0(\tilde{X})$ uniformly in $h$, using Lemma~\ref{lemma: getting
  h out}. 

We now focus on the operators $A_0$ and $A_{0,\kappa}$.  From
(\ref{eq: al 0 kappa}) and Lemma~\ref{lemma: al 1 kappa}, the
expression of the remainder term in (\ref{eq: amplitude to Weyl}) and
using Lemma~\ref{lemma: getting h out} we obtain
\begin{align}
  \label{eq: al kappa}
  \al_\kappa(x,\xi)  = \chi(L(x))^2 p_h(L(x), \transp \kappa'(L(x)) \xi) 
  \left(\mstrut{0.4cm}\right.\!\!
    1 - h \frac{i}{2} \sum_{k=1}^n f_k(x) \,
    (\d_{\xi_k} q)(L(x),\transp \kappa'(L(x)) \xi)  
  \!\!\left.\mstrut{0.4cm}\right) + h \tilde{\al}_\kappa,
\end{align}
with $\tilde{\al}_\kappa$ in $S^0$ uniformly in $h$.  Similarly, if we
denote by $q_\kappa$ the Weyl symbol of $(\kappa^{-1})^\ast \circ
q^w(x,D_x) \circ \kappa^\ast$, we have
\begin{align}
  \label{eq: weyl change of variable}
  q_\kappa(x,\xi)  = q(L(x), \transp \kappa'(L(x)) \xi) 
  + \frac{i}{2} \sum_{k=1}^n f_k(x) \,
    (\d_{\xi_k} q)(L(x),\transp \kappa'(L(x)) \xi)+ \tilde{q}_\kappa,
\end{align}
where $\tilde{q}_\kappa \in S^0$.  We now prove that after the change
of variables $x \mapsto \kappa(x)$, for the operator $ \chi \circ
p_h^w(x,D_x) \circ \chi = \al^w(x,D_x)$, we may use the symbol
$\chi(L(x))\, \#^w e^{-h q_\kappa(x,\xi)} \, \#^w \chi(L(x))$ in place
of $\al_{\kappa}(x,\xi)$, the pullback of $\al$ in the Weyl
quantization, yet remaining within a first-order precision \wrt to the
small parameter $h$.
\begin{lemma}
  \label{lemma: change of variable chi ph chi}
  We set $\hat{p}_h(x,\xi) = e^{-h q_\kappa(x,\xi)}$. We have 
  \begin{align*}
    \left( ((\kappa^{-1})^\ast \chi)\, \#^w  \hat{p}_h \, 
    \#^w ((\kappa^{-1})^\ast \chi)\right)(x,\xi)
    - \al_{\kappa}(x,\xi) = h \lambda_h (x,\xi),
  \end{align*}
  where $\lambda_h$ is in $S^0$ uniformly in $h$.
\end{lemma}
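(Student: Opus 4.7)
The plan is to expand both sides to first order in $h$ and exploit a Poisson-bracket cancellation. Using~(\ref{eq: weyl change of variable}) I write $q_\kappa = Q_0 + Q_1 + \tilde{q}_\kappa$ with
$Q_0(x,\xi) = q(L(x),\transp\kappa'(L(x))\xi)$,
$Q_1(x,\xi) = \frac{i}{2}\sum_k f_k(x)(\d_{\xi_k}q)(L(x),\transp\kappa'(L(x))\xi) \in S^1$,
and $\tilde{q}_\kappa \in S^0$.
The ellipticity of $q$ transfers to $Q_0$ through the diffeomorphism $\kappa$, so after a suitable extension off $\tilde X$ the symbol $q_\kappa$ still fulfils Assumption~\ref{assumption: symbol q}. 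Consequently $\hat{p}_h = e^{-h q_\kappa}$ enjoys all the properties of Lemma~\ref{lemma: getting h out} and Proposition~\ref{prop:composition r ph}.

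Since scalar symbols commute pointwise, $\hat{p}_h = e^{-hQ_0}\,e^{-h(Q_1+\tilde{q}_\kappa)}$. A second-order Taylor expansion of the second factor, combined with Lemma~\ref{lemma: getting h out}(i) applied to $e^{-hQ_0 - sh(Q_1+\tilde q_\kappa)}$ (whose elliptic principal part is still $Q_0$), gives
\begin{equation*}
  \hat{p}_h = e^{-hQ_0} - h Q_1\, e^{-hQ_0} + h\,\mu_h, \qquad \mu_h \in S^0 \text{ uniformly in } h.
\end{equation*}
Comparing with~(\ref{eq: al kappa}) and writing $\tilde{\chi} := \chi\circ L = (\kappa^{-1})^\ast\chi$, this yields $\al_\kappa = \tilde{\chi}^2\hat{p}_h + h\rho_h$ with $\rho_h \in S^0$ uniformly. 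The lemma is therefore reduced to showing
\begin{equation*}
  \tilde{\chi}\,\#^w\,\hat{p}_h\,\#^w\,\tilde{\chi} \;-\; \tilde{\chi}^2\hat{p}_h \in h\,S^0 \text{ uniformly in } h.
\end{equation*}

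For the reduced statement I apply Proposition~\ref{prop:composition r ph} (valid for $\hat{p}_h$) to obtain $\tilde{\chi}\,\#^w\,\hat{p}_h = \tilde{\chi}\hat{p}_h + \frac{1}{2i}\{\tilde{\chi},\hat{p}_h\} + h\lambda_h$ with $\lambda_h \in S^0$, and then compose with $\tilde{\chi}$ on the right using the Weyl formula~(\ref{eq: composition formula Weyl}) at order $k=1$. The two first-order Poisson-bracket contributions produced by these expansions are
\begin{equation*}
  \tfrac{1}{2i}\{\tilde{\chi}\hat{p}_h,\tilde{\chi}\}
  = \tfrac{1}{2i}\sum_j \tilde{\chi}\,\d_{x_j}\tilde{\chi}\,\d_{\xi_j}\hat{p}_h,
  \qquad
  \tfrac{1}{2i}\{\tilde{\chi},\hat{p}_h\}\,\tilde{\chi}
  = -\tfrac{1}{2i}\sum_j \tilde{\chi}\,\d_{x_j}\tilde{\chi}\,\d_{\xi_j}\hat{p}_h
\end{equation*}
(using $\d_{\xi_j}\tilde{\chi} = 0$ and the Leibniz rule), and they cancel exactly. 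The remaining second-order remainders involve $\d^2_{\xi_j\xi_k}\hat{p}_h$ and $\d_{\xi_j}\{\tilde{\chi},\hat{p}_h\}$, which by Lemma~\ref{lemma: getting h out}(ii) supply the desired factor of $h$; the conclusion $hS^0$ then follows from the oscillatory-integral bookkeeping used in the proof of Proposition~\ref{prop:composition r ph} (via Theorem~2.2.5 of~\cite{Kumano-go:81}).

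The main obstacle is precisely this bookkeeping: one must choose the free parameter $m\in[0,1]$ in Lemma~\ref{lemma: getting h out}(ii) appropriately at each step, so that every remainder lands in $hS^0$ rather than merely $h^{1/2}S^0$ or $hS^1$. A secondary technical point is the global extension of $q_\kappa$ off $\tilde X$: since $\tilde{\chi}$ has compact support in $\tilde X$, any extension preserving the ellipticity of $Q_0$ will do, and the off-diagonal contribution to the amplitude of $\chi\,p_h^w(x,D_x)\,\chi$ supplies only an $h\Psi^0$ correction (via the regularization~(\ref{eq: regularized a infty})) that can be absorbed into $\lambda_h$.
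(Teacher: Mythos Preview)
Your argument is correct and follows the paper's own proof almost line by line: both reduce, via the Taylor expansion of $e^{-h q_\kappa}$ around $e^{-h Q_0}$ and comparison with~(\ref{eq: al kappa}), to the identity $\tilde{\chi}\,\#^w\,\hat{p}_h\,\#^w\,\tilde{\chi}-\tilde{\chi}^2\hat{p}_h\in hS^0$. The only genuine difference is in how this last identity is established. The paper packages it as a separate Lemma~\ref{lemma: symbol conjugation chi}, proved by passing through the amplitude $\phi(x)\phi(y)p_h((x+y)/2,\xi)$ and the amplitude-to-Weyl formula of Proposition~\ref{prop: amplitude to Weyl symbol}: because that amplitude is symmetric under $x\leftrightarrow y$, the first-order term $\frac{i}{2}\langle\partial_x-\partial_y,\partial_\xi\rangle a|_{y=x}$ vanishes automatically and one lands directly on the second-order remainder. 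Your route instead composes twice with Proposition~\ref{prop:composition r ph} and the Weyl formula~(\ref{eq: composition formula Weyl}), then observes the explicit cancellation $\{\tilde{\chi}\hat{p}_h,\tilde{\chi}\}+\{\tilde{\chi},\hat{p}_h\}\tilde{\chi}=0$. The two computations are equivalent (the amplitude symmetry is exactly what makes the Poisson brackets cancel), but the paper's version is slightly cleaner since it avoids the two-step composition and the attendant remainder bookkeeping you flag as ``the main obstacle''.
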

\begin{proof}
  We set $\nu(x,\xi) = \frac{i}{2} \sum_{k=1}^n f_k(x) (\d_{\xi_k} q)
  (L(x),\transp \kappa'(L(x)) \xi) $.   Making use of (\ref{eq: weyl
  change of variable}), we write
\begin{align*}
  \hat{p}_h(x,\xi) &= p_h (L(x),\transp\kappa'(L(x))\xi) 
  e^{- h \nu(x,\xi)}
  e^{-h \tilde{q}_\kappa(x,\xi)}\\
  & =  p_h (L(x),\transp\kappa'(L(x))\xi) 
  \left(\mstrut{0.4cm}\right. \!\!1 - h \nu(x,\xi) + (h\nu(x,\xi))^2  
  \int\limits_0^1 e^{-r h \nu(x,\xi)} (1-r )\: d r\!\!
  \left.\mstrut{0.4cm} \right) (1 + h\mu_1(x,\xi)),
\end{align*}
by two Taylor formulae, where $\mu_1$ is in $S^0$ uniformly in
$h$. From Lemmata~\ref{lemmaM: property q} and \ref{lemma: getting h
  out} we obtain that
\begin{align*}
   p_h (L(x),\transp\kappa'(L(x))\xi)   (h\nu(x,\xi))^2  
  \int\limits_0^1 e^{-r h \nu(x,\xi)} (1-r )\: d r = h \mu_2(x,\xi),
\end{align*}
with $\mu_2$ in $S^0$ uniformly in $h$.
From (\ref{eq: al kappa}) we hence obtain
\begin{align*}
  \al_\kappa(x,\xi) - \chi(L(x))^2 \hat{p}_h (x,\xi) = h \mu_3, 
\end{align*}
with $\mu_3$ in $S^0$ uniformly in $h$.  We conclude the proof with
the following lemma since $\hat{p}_h$ and $p_h$ are of the same nature.
\end{proof}
\begin{lemma}
  \label{lemma: symbol conjugation chi}
  Let $\phi \in \Cinfc(X)$. We then have
  \begin{align*}
    \phi\, \#^w p_h\, \#^w \phi - \phi^2 p_h = h \lambda_h,
  \end{align*}
  where $\lambda_h$ is in $S^0$ uniformly in $h$.
\end{lemma}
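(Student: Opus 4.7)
The plan is to read off $\phi\,\#^w p_h\,\#^w \phi$ from the amplitude representation of the operator $\phi \circ p_h^w(x,D_x) \circ \phi$, whose amplitude is
\[
a(x,y,\xi) := \phi(x)\,\phi(y)\,p_h\bigl((x+y)/2,\xi\bigr),
\]
and then to apply Proposition~\ref{prop: amplitude to Weyl symbol} with $k=1$. The resulting expansion of the Weyl symbol reads
\[
a(x,x,\xi) + \tfrac{i}{2}\sum_{j=1}^n (\d_{x_j}-\d_{y_j})\d_{\xi_j}\, a(x,y,\xi)|_{y=x} + R(x,\xi),
\]
where $R$ is the oscillatory-integral remainder carrying the operator $\bigl((\d_x-\d_y)\cdot\d_\zeta\bigr)^2$ applied to $a(x+rz,y-rz,\zeta)$. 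The zeroth-order term is $a(x,x,\xi) = \phi^2(x)\,p_h(x,\xi)$, providing the principal term $\phi^2 p_h$ of the claim.

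The crucial observation is that the first-order term vanishes by a symmetry argument. Since $\d_{x_j}\bigl((x+y)/2\bigr) = \d_{y_j}\bigl((x+y)/2\bigr)$, the operator $\d_{x_j}-\d_{y_j}$ kills $p_h((x+y)/2,\xi)$; the first-order term thus reduces to $\bigl[\d_{x_j}\phi(x)\,\phi(y) - \phi(x)\,\d_{y_j}\phi(y)\bigr]\,\d_{\xi_j} p_h((x+y)/2,\xi)$, which vanishes upon setting $y=x$. This cancellation, due to the symmetric placement of the two factors of $\phi$, is the structural feature that makes the lemma go through. For the same reason, in the remainder $R$ the operator $(\d_x-\d_y)^2$ acts only on $\phi(x+rz)\phi(y-rz)$, producing a bounded smooth factor $F_{jk}(x,rz)$ (compactly supported in $rz$ for each $x$), while the two $\d_\zeta$'s fall entirely on $p_h(x,\zeta)$.

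By Lemma~\ref{lemma: getting h out}(ii) applied with $m=1$ and $|\beta|=2$, we have $\d^2_{\zeta_j\zeta_k} p_h = h\,\tilde p^{jk}_h$ with $\tilde p^{jk}_h \in S^0$ uniformly in $h$. Factoring out this $h$ yields $R = h\,\lambda_h$, where $\lambda_h$ is an oscillatory integral whose amplitude $\sum_{jk} F_{jk}(x,rz)\,\tilde p^{jk}_h(x,\zeta)$ is bounded in the multiple-symbol class uniformly in $r \in [0,1]$ and in $h$. Theorem 2.2.5 in \cite{Kumano-go:81}, used exactly as in the proof of Proposition~\ref{prop:composition r ph}, then gives $\lambda_h \in S^0$ uniformly in $h$, completing the proof. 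The only genuine technical point is tracking the uniformity in $h$ through the oscillatory integral, which is immediate from the uniform seminorm bounds furnished by Lemma~\ref{lemma: getting h out}.
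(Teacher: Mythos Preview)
Your proof is correct and follows essentially the same approach as the paper's: both use the amplitude $\phi(x)\phi(y)p_h((x+y)/2,\xi)$, apply Proposition~\ref{prop: amplitude to Weyl symbol} with $k=1$, observe that the first-order term vanishes by the symmetry in $x,y$, and handle the second-order remainder via Lemma~\ref{lemma: getting h out} and Theorem~2.2.5 of \cite{Kumano-go:81}. The paper simply writes out the remainder integrand explicitly rather than describing it in words.
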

\begin{proof}
  Since $\phi(x) \phi(y) p_h((x+y)/2,\xi)$ is an amplitude for the operator
  with Weyl symbol $\phi\, \#^w p_h\, \#^w \phi$, by (\ref{eq:
    amplitude to Weyl}) we obtain
  \begin{align*}
    (\phi\, \#^w p_h\, \#^w \phi) (x,\xi) &= 
    \pi^{-n} \iint e^{2i \inp{z}{\xi-\zeta}} \phi(x-z)\,  \phi(x+z)\,
    p_h(x,\zeta)\, d z \ d \zeta\\
    &=\phi^2(x) p_h(x,\xi)
    -\frac{1}{4} \pi^{-n}
    \!\!\!\!\sum_{1\leq j,k\leq n} \int\limits_0^1\!\! \iint 
    (1-r)  e^{2i \inp{z}{\xi-\zeta}}  
    \left(-\phi(x+r z)\, \d^2_{x_j,x_k}\phi(x-r z)\right. \\
    &\hspace*{0.5cm}-\left. 2 \d_{x_j} \phi(x+r z)\, \d_{x_k}\phi(x-r z)\, 
    + \d^2_{x_j,x_k} \phi(x+r z)\, \phi(x-r z)\right)\, 
    \d^2_{\xi_j,\xi_k} p_h(x,\zeta)\,
    \ d r\ d z \ d \zeta.
  \end{align*}
  We then conclude as in the proof of
  Proposition~\ref{prop:composition r ph} in Appendix~\ref{appendix:
  composition} by using Lemma~\ref{lemma: getting h out} and Theorem
  2.2.5 in \cite{Kumano-go:81}.
\end{proof}

\paragraph{\normalsize \bfseries Acknowledgements:} 
We wish to thank L.~Robbiano for discussions on pullbacks of Weyl
symbols.  This work was initiated while the second author was visiting
the Institute of Mathematics at the University of Tsukuba.  Part of
this work was done when the second author was on a research leave at
Universit\'e Pierre et Marie Curie, Laboratoire Jacques-Louis Lions,
CNRS UMR 7598, Paris, France. The second author was partially
supported by l'Agence Nationale de la Recherche under grant
ANR-07-JCJC-0139-01.

\bibliographystyle{amsalpha}
\bibliography{references}

\end{document}